\newlength{\fixboxwidth}
\newcommand*{\distr}[2]{\left\langle #1, #2 \right\rangle}              
\newcommand*{\abs}[1]{\left| #1 \right|}                                
\newcommand*{\norm}[1]{\left\| #1 \right\|}                             
\newcommand*{\sep}{\; \vrule \;}                                        
\renewcommand{\d}{\,\mathrm{d}}											
\newcommand{\osc}{\mathrm{osc}}											
\newcommand{\loc}{\mathrm{loc}}											
\newcommand{\esssup}{ \mathop{\mathrm{ess}\text{-}\mathrm{sup}}\limits }                 
\newcommand{\re}{\mathbb{R}}\newcommand{\N}{\mathbb{N}}
\newcommand{\zz}{\mathbb{Z}}
\newcommand{\com}{\mathbb{C}}
\newcommand{\Z}{{\zz}^d}
\newcommand{\R}{{\re}^d}
\newcommand{\cs}{{\mathcal S}}
\newcommand{\cd}{{\mathcal D}}
\newcommand{\cl}{{\mathcal L}}
\newcommand{\cf}{{\mathcal F}}
\newcommand{\cfi}{{\cf}^{-1}}
\newcommand{\supp}{{\rm supp \, }}
\newcommand{\dist}{{\rm dist \, }}
\newcommand{\be}{\begin{equation}}
\newcommand{\ee}{\end{equation}}
\newcommand{\beq}{\begin{eqnarray}}
\newcommand{\beqq}{\begin{eqnarray*}}
\newcommand{\eeq}{\end{eqnarray}}
\newcommand{\eeqq}{\end{eqnarray*}}
\newtheorem{theorem}{Theorem}[section]
\newaliascnt{lem}{theorem}
\newtheorem{lemma}[lem]{Lemma}
\newaliascnt{ass}{theorem}
\newaliascnt{prop}{theorem}
\newtheorem{prop}[prop]{Proposition}
\newaliascnt{cor}{theorem}
\newaliascnt{defi}{theorem}
\newtheorem{defi}[defi]{Definition}
\theoremstyle{definition}
\newaliascnt{ex}{theorem}
\newaliascnt{rem}{theorem}
\newtheorem{remark}[rem]{Remark}
\begin{document}


\title{Oscillations and Differences in\\ Besov-Morrey and Besov-Type Spaces}
\author{Marc Hovemann\footnote{Philipps-Universit\"at Marburg, Institute of Mathematics, Hans-Meerwein-Stra{\ss}e 6, 35043 Marburg, Germany. 
Email: \href{mailto:hovemann@mathematik.uni-marburg.de}{hovemann@mathematik.uni-marburg.de} } $^{,}$\thanks{Marc Hovemann has been supported by Deutsche Forschungsgemeinschaft (DFG), grant HO 7444/1-1 with project number 528343051.
} $^{,}$\footnote{Corresponding author.}
\quad\ and \quad Markus Weimar\footnote{Julius-Maximilians-Universit\"at W\"urzburg (JMU), 
Institute of Mathematics, 
Emil-Fischer-Stra{\ss}e 30, 97074 W\"urzburg, Germany. 
Email: \href{mailto:markus.weimar@uni-wuerzburg.de}{markus.weimar@uni-wuerzburg.de} }
}
\date{\today}

\maketitle

\noindent\textbf{Abstract:} In this paper we investigate Besov-Morrey spaces $\mathcal{N}^{s}_{u,p,q}(\Omega)$ and Besov-type spaces $B^{s,\tau}_{p,q}(\Omega)$ of positive smoothness defined on Lipschitz domains $\Omega \subset \mathbb{R}^d$ as well as on~$\mathbb{R}^d$. We combine the Hedberg-Netrusov approach to function spaces with distinguished kernel representations due to Triebel, in order to derive novel characterizations of these scales in terms of local oscillations provided that some standard conditions concerning the parameters are fulfilled. In connection with that we also obtain new characterizations of $\mathcal{N}^{s}_{u,p,q}(\Omega)$ and~$B^{s,\tau}_{p,q}(\Omega)$ via differences of higher order. By the way we recover and extend corresponding results for the scale of classical Besov spaces $B^{s}_{p,q}(\Omega)$. 

\vspace{0,2 cm}

\noindent\textbf{Key words:} Besov-Morrey space, Besov-type space, Morrey space, Lipschitz domain, oscillations, higher order differences

\vspace{0,2 cm}

\noindent\textbf{Mathematics Subject Classification (2010):} 46E35, 46E30, 41A30, 26B35

\section{Introduction and Main Results}

Nowadays the Besov spaces $B^{s}_{p,q} (\mathbb{R}^d)$ are a well-established tool to precisely describe the regularity of functions and distributions. 
These function spaces have been introduced by Nikol'skij and Besov between 1951 and 1961; see \cite{Ni2}, \cite{Be1959}, and \cite{Be1961}. 
Later on, a Fourier analytic approach towards $B^{s}_{p,q} (\mathbb{R}^d) $ has been investigated in detail in the books of Triebel; see at least \cite{Tr83}, \cite{Tr92}, and \cite{Tr06}. 
In recent years a growing number of authors worked with generalizations of Besov spaces where the $ L_{p}$-quasi-norm is replaced by a Morrey quasi-norm. 
In connection with that Besov-Morrey spaces $  \mathcal{N}^{s}_{u,p,q}(\mathbb{R}^d) $ with $ 0 < p \leq u < \infty$, $ 0 < q \leq \infty $ and $ s \in \mathbb{R} $ showed up. They have been introduced by Kozono and Yamazaki~\cite{KoYa} in 1994. 
Later those Besov-Morrey spaces were studied by Mazzucato~\cite{Maz2003} in the context of Navier-Stokes equations. 
A modification of the definition of Besov-Morrey spaces leads to so-called Besov-type spaces $B^{s , \tau}_{p,q}(\mathbb{R}^d)$ with $s \in \mathbb{R}$, $ 0 \leq  \tau < \infty$, and $0 < p,q \leq \infty$. These function spaces have been introduced by El Baraka~\cite{EB1} around 2002. In the sequel Besov-type spaces also have been investigated by Yang and Yuan~\cite{yy2}. 
However, both the scales $\mathcal{N}^{s}_{u,p,q}(\mathbb{R}^d) $ and $B^{s,\tau}_{p,q}(\mathbb{R}^d)$ also include the original Besov spaces, since we have $\mathcal{N}^{s}_{p,p,q}(\mathbb{R}^d) = B^{s}_{p,q}(\mathbb{R}^d)=B^{s, 0}_{p,q}(\mathbb{R}^d)$. On the other hand, let us remark that for $u \neq p$ or $\tau \neq 0$ and many parameter constellations $  \mathcal{N}^{s}_{u,p,q}(\mathbb{R}^d) $ and $B^{s , \tau}_{p,q}(\mathbb{R}^d)$ do not coincide. We refer to \cite[Corollary 3.3(ii)]{ysy} and \cite[Theorem 1.1]{SawYaYu} for some more details.

When it comes to applications, in the theory of quasi-linear partial differential equations (PDEs) locally weighted $L_p$-averages (such as norms in Morrey spaces) of derivatives constitute a standard tool from the early beginning~\cite{Mor}; see also \cite{DahDieHar+} and the references therein.
In this context it seems advantageous to deal with Besov-Morrey spaces~$\mathcal{N}^{s}_{u,p,q}(\Omega)$ and Besov-type spaces $B^{s, \tau}_{p,q}(\Omega)$ defined on domains $\Omega \subseteq \mathbb{R}^d$ with preferably mild restrictions on the regularity of their boundary. 
Therefore, we will (mostly) concentrate on special or bounded \emph{Lipschitz domains} $\Omega$ in what follows. 

It is the main aim of this paper to prove intrinsic characterizations of the spaces~$\mathcal{N}^{s}_{u,p,q}(\Omega) $ and $B^{s, \tau}_{p,q}(\Omega)$ in terms of oscillations $\osc_{v, \Omega}^{N}f$. 
Here for a function $f \in L_v^{\loc}(\Omega)$ with $0< v \leq \infty$ its local $v$-oscillation of order $N\in\N_0$ is given by
\begin{align*}
\osc_{v, \Omega}^{N}f(x,t) := \inf_{\pi \in \mathcal{P}_{N}} \Big( t^{-d} \int_{B(x,t) \cap \Omega} \abs{f(y) -  \pi (y)}^{v} \d y \Big)^{\frac{1}{v}},
\qquad x\in\Omega,\; t>0,
\end{align*}
whereby $ \mathcal{P}_{N} $ denotes the set of all polynomials with degree at most $N$. For $v = \infty$ the usual modifications have to be made. If $\Omega=\R$, we write $\osc_{v}^{N}f:=\osc_{v,\R}^{N}f$. 
We will investigate under which conditions on the parameters the membership of functions~$f$ in~$\mathcal{N}^{s}_{u,p,q}(\Omega)$ or~$B^{s, \tau}_{p,q}(\Omega)$ can be described by using $\osc_{v, \Omega}^{N}f$ only. 
Such characterizations allow to describe Besov-Morrey and Besov-type spaces in terms of equivalent quasi-norms which measure the decay of bestapproximation errors with regard to polynomials. 
This illustrates the strong relationship between the scales $\mathcal{N}^{s}_{u,p,q}(\Omega)$ and $ B^{s, \tau}_{p,q}(\Omega)$ and abstract approximation theory which is well-known for classical Sobolev and Besov spaces; see, e.g.,~\cite{CioWei} and \cite{Tr89}. 
As an application, characterizations via oscillations could be used to derive sharp assertions on the regularity of PDE solutions as it was done for instance in \cite{BalDieWei} for the original Besov spaces $B^s_{p,q}(\Omega)$. In a sense this paper can be seen as a continuation of~\cite{Ho1} and \cite{HoWe23}, where corresponding descriptions of Triebel-Lizorkin-Morrey spaces~$\mathcal{E}^s_{u,p,q}(\Omega)$ in terms of oscillations have been proven. 

For the classical Besov spaces $B^{s}_{p,q}(\Omega)$, at which $\Omega \subseteq \mathbb{R}^d$ is either $\mathbb{R}^d$ or a bounded $C^{\infty}$-domain, characterizations in terms of oscillations are known since decades. 
We may refer e.g.\ to Triebel \cite{Tr92}, where the following (slightly modified) result can be found. 

\begin{theorem}[{Triebel~\cite[Theorems 3.5.1 and 5.2.1]{Tr92}}]\label{thm_hist1}
    For $d\in\N$ let $\Omega\subseteq\R$ be either~$\mathbb{R}^d$ or a bounded $C^{\infty}$-domain. 
    Let $0 < p \leq \infty$, $0 < q \leq \infty $, $ 1 \leq v \leq \infty$, $N \in \mathbb{N}$, and $s \in \mathbb{R}$ with
    \begin{align*}
        d \max\!\left\{ 0, \frac{1}{p} - \frac{1}{v}  \right\} < s < N. 
    \end{align*}
    Then $f \in L_{\max\{p,v\}}(\Omega)$ belongs to $B^{s}_{p,q}(\Omega)$ if and only if the equivalent quasi-norm
    \begin{align*}
        \norm{ f \sep L_{p}(\Omega) } + \Big (  \int_{0}^{1} t^{- sq} \norm{\osc^{N}_{v, \Omega} f(\cdot , t) \sep L_{p}(\Omega) }^{q} \frac{\d t}{t} \Big )^{\frac{1}{q}}
    \end{align*}
    is finite, where for $q = \infty$ the usual modification has to be made.
\end{theorem}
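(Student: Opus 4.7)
The plan is to prove both directions separately, working first on $\Omega=\R$ and then transferring to bounded $C^\infty$-domains by means of a universal extension operator (e.g., Rychkov's) that respects polynomial best approximants up to constants depending only on $\Omega$. Throughout, the key analytic tool is the Fourier-analytic Littlewood--Paley decomposition $f=\sum_{j\geq 0}f_j$, where $f_j$ has spectrum in a dyadic annulus of radius $\sim 2^j$, giving the usual equivalent Besov quasi-norm $\norm{f\sep B^s_{p,q}(\R)}\sim\bigl(\sum_j 2^{jsq}\norm{f_j\sep L_p(\R)}^q\bigr)^{1/q}$.

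For the direction ``$B^s_{p,q}$-membership implies finiteness of the oscillation quasi-norm,'' fix $t\in(0,1)$ with $t\sim 2^{-J}$ and split $f=f^\flat+f^\sharp$ with $f^\flat:=\sum_{j<J}f_j$ being entire of exponential type $\sim 2^J$. Approximate $f^\flat$ on $B(x,t)$ by its Taylor polynomial $\pi_x$ of degree at most $N-1$ based at $x$. Combining this with the pointwise control of $N$-th order derivatives of band-limited functions and Peetre's maximal inequality (legitimate thanks to $v\geq 1$) yields
\begin{equation*}
\osc_v^N f(x,t)\ls t^N\sum_{j<J}2^{jN}\bigl(M\abs{f_j}^v\bigr)(x)^{1/v}+\sum_{j\geq J}\bigl(M\abs{f_j}^v\bigr)(x)^{1/v}.
\end{equation*}
Taking $L_p$-norms in $x$, invoking the Fefferman--Stein vector-valued maximal inequality (applicable under the assumption $s>d(1/p-1/v)_+$), and applying a discrete Hardy-type inequality (valid because $0<s<N$) then bounds the oscillation quasi-norm by $\norm{f\sep B^s_{p,q}(\R)}$.

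For the converse, assume $f\in L_{\max\{p,v\}}(\R)$ with finite oscillation quasi-norm. Use a standard dyadic covering of $\R$ by balls $B_{j,m}:=B(x_{j,m},2^{-j})$ and pick near-best polynomial approximants $\pi_{j,m}\in\mathcal{P}_{N-1}$ of $f$ on each $B_{j,m}$ in the $L_v$-sense, so that
\begin{equation*}
2^{-jd/v}\norm{f-\pi_{j,m}\sep L_v(B_{j,m})}\ls\osc_v^N f(x_{j,m},2^{-j}).
\end{equation*}
Using cut-offs subordinate to a partition of unity on each scale, expand $f$ as a telescoping sum whose building blocks are the smoothly truncated differences $\pi_{j,m}-\pi_{j-1,m'}$ of approximants on overlapping balls; these serve as (non-classical) polynomial atoms whose sizes are dominated by the oscillation. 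The classical atomic characterization of $B^s_{p,q}(\R)$ (cf.\ \cite{Tr92}) then reassembles $\norm{f\sep B^s_{p,q}(\R)}$ from the oscillation quasi-norm, up to a uniform constant.

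The main obstacle is the converse direction, especially the analysis of the differences of polynomial approximants on overlapping balls. These atoms carry no vanishing moments, which is harmless in the present positive-smoothness regime \emph{precisely} because of the assumption $s>d(1/p-1/v)_+$; dropping this condition would force additional polynomial corrections on each scale. A second delicate point is the exchange between $\ell_q$-summation in $j$ and the vector-valued maximal function in the case $v>p$, where one has to estimate $L_p$-norms of local $L_v$-averages instead of the other way around. The bounded $C^\infty$-domain case finally reduces to the $\R$ case via the universal extension operator, after one has verified that both the Besov and the oscillation quasi-norms behave well under restriction and extension across a smooth boundary.
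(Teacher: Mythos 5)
This statement is not actually proved in the paper: it is quoted as background from Triebel's book \cite[Theorems 3.5.1 and 5.2.1]{Tr92}. So there is no in-paper proof to compare against directly, but one can still contrast your route with Triebel's original argument and with the machinery the paper uses for the Morrey-type generalizations of this result.

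Your plan — a Littlewood--Paley split plus Taylor remainder for the direct estimate, near-best polynomial approximants assembled through a telescoping atomic decomposition for the converse — is a legitimate strategy and is closest in spirit to the abstract Hedberg--Netrusov framework \cite[Theorem 1.1.14]{HN}, which the paper itself invokes for the generalized scales on $\R$ (Theorems~\ref{thm_HN_N_osc_1} and~\ref{thm_HN_B_osc1}). Triebel's own proof is organized rather differently: he works through local means with distinguished kernels $k_j$ built from a compactly supported test function (recalled in Section~\ref{Subsec_Diff1_splido} of the present paper), relates these kernels to iterated differences via the identity behind \eqref{eq_kern_diff1_new}, and passes from differences to oscillations using Whitney's inequality. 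The kernel route buys you clean control near a Lipschitz boundary, which is exactly why the paper adopts it in Section~\ref{sect:characterizations_domains}; your atomic route is more elementary on $\R$ but makes the domain case harder.

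Two details in your sketch need correction, and a third step is a genuine gap. First, since $B^s_{p,q}$ carries the $\ell^q(L_p)$ structure rather than $L_p(\ell^q)$, the vector-valued Fefferman--Stein inequality is not what is needed: the scalar Hardy--Littlewood maximal inequality together with Peetre's maximal function do the work, and the role of $s>d(1/p-1/v)_+$ is not to license Fefferman--Stein but to absorb the Bernstein-type loss of order $2^{jd(1/p-1/v)}$ incurred when passing from $L_p$ information on the band-limited pieces $f_j$ to local $L_v$-means in the case $v>p$. Second, your claim that the atoms need no vanishing moments ``precisely because of $s>d(1/p-1/v)_+$'' conflates two conditions: what the non-smooth atomic decomposition of $B^s_{p,q}$ actually requires is $s>\sigma_p$. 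It is true (and should be spelled out) that $v\geq 1$ forces $d(1/p-1/v)_+\geq\sigma_p$, so the hypothesis does imply $s>\sigma_p$, but the reasoning as written would break down if $v<1$ were permitted. Third, and most substantially, the reduction to bounded domains cannot be dismissed by invoking a universal extension operator ``that respects polynomial best approximants'': no standard extension operator has this property for free, and controlling how the oscillation quasi-norm transforms under extension is a nontrivial task. The present paper needs the entire kernel machinery of Section~\ref{sect:characterizations_domains} together with a localization argument to achieve exactly this for Lipschitz domains; Triebel's proof of the domain case in \cite{Tr92} likewise requires dedicated work (Section~5.2 there), not a one-line reduction.
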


Further results concerning the original Besov spaces and local oscillations also can be found in DeVore and Popov \cite{DePo}, Dorronsoro \cite[Theorem 1]{Dor1985}, Sawano \cite[Theorem 2.40]{SawBook}, Shvartsman \cite{Shv06}, Triebel \cite[Theorem 2.2.2]{Tr89} and Wallin \cite[Theorem 7]{Wal}. For $\Omega = \mathbb{R}^d$ in the literature one can also find versions of \autoref{thm_hist1} with $ 0 < v \leq \infty$, see for example Hedberg, Netrusov \cite[Theorem 1.1.14]{HN}.

When we turn to Besov-Morrey spaces $\mathcal{N}^{s}_{u,p,q}(\Omega)$ some first characterizations in terms of oscillations for the special case $\Omega = \mathbb{R}^d$ can be found in \cite[Chapter~4.5.3]{ysy}. Some of these results will be recalled in \autoref{sec_char_Rd_1} below. 
Our first main result of this paper extends these assertions. It provides new equivalent quasi-norms in terms of local oscillations for the spaces $\mathcal{N}^{s}_{u,p,q}(\Omega)$, whereby $\Omega$ is either $\mathbb{R}^d$ or a special or bounded Lipschitz domain.

\begin{theorem}[Oscillation Characterizations for Besov-Morrey Spaces]\label{mainresult1}
    Let $d,N \in \mathbb{N}$, $0< p \leq u < \infty$, $0 < q,T,v \leq \infty$, and $0<R<\infty$. Moreover, assume that $s\in\re$ satisfies
    \begin{align*}
		d\, \max\! \left\{ 0, \frac{1}{p} - 1,  \frac{1}{p} - \frac{1}{v} \right\} < s < N.
    \end{align*}
    Then the following statements hold:
    \begin{enumerate}
        \item $\mathcal{N}^s_{u,p,q}(\R)$ is the collection of all $f \in L^{\loc}_{\max\{1,p,v\}}(\R)$ for which
        \begin{align*}
            \norm{ \bigg( \int_{B(\,\cdot\,, R)} \abs{f(y)}^v \d y \bigg)^{\frac{1}{v}} \sep \mathcal{M}^{u}_{p}(\R)}  + \bigg( \int_{0}^{T} t^{-sq} \norm{ \osc_{v}^{N-1} f(\cdot,t) \sep \mathcal{M}^{u}_{p}(\mathbb{R}^d)}^{q} \frac{\d t}{t} \bigg)^{\frac{1}{q}} 
        \end{align*}
        is finite (equivalent quasi-norm).
        Furthermore, the assertion remains valid when $\norm{ \big( \int_{B(\,\cdot\,, R)} \abs{f(y)}^v \d y \big)^{\frac{1}{v}} \sep \mathcal{M}^{u}_{p}(\R)}$ is replaced by $\norm{f \sep \mathcal{M}^{u}_{p}(\R)}$.

        \item In addition assume that $v\geq 1$ and let $\Omega$ be either a special or a bounded Lipschitz domain in $\R$. 
        Then $\mathcal{N}^s_{u,p,q}(\Omega)$ consists of all $f \in L^{\loc}_{\max\{p,v\}}(\Omega)$ for which
        \begin{align*}
            \norm{ \bigg( \int_{B(\,\cdot\,, R)\cap\Omega} \abs{f(y)}^v \d y \bigg)^{\frac{1}{v}} \sep \mathcal{M}^{u}_{p}(\Omega)} + \bigg( \int_{0}^{T} t^{-sq} \norm{ \osc_{v,\Omega}^{N-1} f(\cdot,t) \sep \mathcal{M}^{u}_{p}(\Omega)}^{q} \frac{\d t}{t} \bigg)^{\frac{1}{q}}
        \end{align*}
        is finite (equivalent quasi-norm).
        If additionally $p\geq 1$, this statement remains true when $\norm{ \big( \int_{B(\,\cdot\,, R)\cap\Omega} \abs{f(y)}^v \d y \big)^{\frac{1}{v}} \sep \mathcal{M}^{u}_{p}(\Omega)}$ is replaced by $\norm{f \sep \mathcal{M}^{u}_{p}(\Omega)}$.
      \end{enumerate}
      In all cases, the usual modifications have to be made if $q=\infty$ and/or $v=\infty$.
\end{theorem}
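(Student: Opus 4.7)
The plan is to combine a Triebel-type kernel (local means) characterization of the scale $\cn^{s}_{u,p,q}$ with sharp pointwise bounds relating local oscillations to kernel averages of $f$, and then to transfer the result from $\R$ to Lipschitz domains by means of a universal extension operator together with a Whitney-type polynomial construction. Throughout, the analytic engine is the powered Hardy-Littlewood maximal operator $M^{(\widetilde{v})}g:=(M|g|^{\widetilde{v}})^{1/\widetilde{v}}$ chosen with $\widetilde{v}<\min\{p,v\}$, which is bounded on $\cm^{u}_{p}(\R)$ and satisfies a suitable vector-valued Fefferman-Stein inequality under the parameter restrictions of the theorem.

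For part~(i), I would start from an equivalent quasi-norm of $\cn^{s}_{u,p,q}(\R)$ of the form $\norm{f\sep\cm^{u}_{p}(\R)}$ plus
\begin{align*}
\bigg(\int_{0}^{1}t^{-sq}\norm{K_{t}f\sep\cm^{u}_{p}(\R)}^{q}\,\tfrac{\d t}{t}\bigg)^{1/q},
\end{align*}
where $K_{t}f(x):=t^{-d}\int_{\R}k((x-y)/t)f(y)\,\d y$ for a smooth compactly supported kernel $k$ with vanishing moments up to order $N-1$, so that $K_{t}\pi=0$ for all $\pi\in\cp_{N-1}$. For the direction ``oscillation $\lesssim$ kernel'' I would use a Calder\'on reproducing decomposition $f=K_{1}f+\sum_{\ell\geq 0}(K_{2^{-\ell-1}}-K_{2^{-\ell}})f$ and, on each ball $B(x,t)$ with $t=2^{-j}$, subtract from the smooth high-frequency summands their Taylor polynomials of order $N-1$ at $x$; the moment condition then yields the pointwise bound
\begin{align*}
\osc_{v}^{N-1}f(x,2^{-j})\,\lesssim\,\sum_{\ell\leq j}2^{(\ell-j)N}\,M^{(\widetilde{v})}(K_{2^{-\ell}}f)(x)\,+\,\text{low-frequency term}.
\end{align*}
Taking the $\cm^{u}_{p}(\R)$-quasi-norm, applying the Morrey maximal inequality and a discrete Hardy inequality (valid because $s<N$) closes the estimate. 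The reverse direction uses an atomic-type reconstruction on a dyadic grid: $f$ minus its local best polynomial of order $N-1$ on each Whitney ball yields atoms normalized by the corresponding oscillations, and the resulting atomic-norm estimate for $\cn^{s}_{u,p,q}(\R)$ is controlled by the oscillation quasi-norm. The condition $s>d(1/p-1)_{+}$ is precisely what ensures that no moment conditions on the atoms are required, while $s>d(1/p-1/v)_{+}$ is what makes the maximal step above admissible. The equivalence of the two ``size'' terms in the statement follows from H\"older's inequality together with the oscillation bound applied at scale $t\approx R$.

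For part~(ii), the crucial input is a bounded linear extension operator $\operatorname{ext}\colon\cn^{s}_{u,p,q}(\Omega)\to\cn^{s}_{u,p,q}(\R)$ for special or bounded Lipschitz $\Omega$, available in the admissible parameter range. One direction then follows by applying part~(i) to $\operatorname{ext}(f)$ and restricting to $\Omega$; the assumption $v\geq 1$ enters via H\"older's inequality when passing from $\osc_{v}^{N-1}(\operatorname{ext}f)$ on $B(x,t)$ to $\osc_{v,\Omega}^{N-1}f$ on $B(x,t)\cap\Omega$. Conversely, given $f$ with finite intrinsic oscillation quasi-norm on $\Omega$, I would construct a Whitney extension that glues the local best-polynomial approximations $\pi_{B}\in\cp_{N-1}$ of $f$ on balls $B\subset\Omega$ near the boundary against a smooth Whitney partition of unity on $\R\setminus\Omega$; the resulting distribution lies in $\cn^{s}_{u,p,q}(\R)$ with norm controlled by the intrinsic oscillation quantity of $f$ on $\Omega$, by part~(i). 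The replacement of the averaged size term by $\norm{f\sep\cm^{u}_{p}(\Omega)}$ under the additional hypothesis $p\geq 1$ is a direct H\"older argument on $B(x,R)\cap\Omega$.

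The principal obstacle lies in the low-smoothness regime $v<1$ or $p<1$, where the classical Hardy-Littlewood maximal operator is no longer bounded on $\cm^{u}_{p}(\R)$. Here one is forced to use the powered version $M^{(\widetilde{v})}$, and the strict inequality $s>d\max\{0,1/p-1,1/p-1/v\}$ is exactly what guarantees that the geometric factor produced after the maximal-function step remains summable. A secondary, more bookkeeping-type difficulty is the non-coherence of the polynomials realizing the infima in $\osc_{v,\Omega}^{N-1}f(x,t)$ for different $(x,t)$; absorbing this ambiguity into $\ell_{q}$-summable error terms via a Whitney argument is routine but uses the upper bound $s<N$ in an essential way to provide the polynomial decay at high frequencies.
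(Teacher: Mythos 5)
Your approach shares the broad strategy (reduce to a dyadic oscillation-type quasi-norm, use a Morrey-adapted maximal inequality, then transfer to domains) but differs from the paper's route in several essential respects, and one of those differences leaves a genuine gap.

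For part~(i) on $\R$ your proposal goes through local means $K_t f$ with vanishing moments, a Calder\'on reproducing formula, and an atomic reconstruction for the converse. The paper instead invokes the abstract Hedberg--Netrusov framework (their Theorem~1.1.14(iii)), which directly gives the dyadic oscillation characterization $\norm{\cdot\sep\mathcal{N}^s_{u,p,q}(\R)}^{(\clubsuit)}_\osc$; the remaining work in the paper's proof of \autoref{thm_osc_Rda=2} is ``only'' passing from the dyadic to the continuous quasi-norm, handling the low-frequency part via the covering \eqref{eq_displace_vec1} and the geometric sum governed by $s>d[\tfrac1\mu-\tfrac1v]$ with $\mu=\min\{p,v\}$, and replacing the averaged size term by $\norm{f\sep\mathcal{M}^u_p}$ under the case distinction $p<v$ vs.\ $v\leq p$. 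Your route is plausible but reinvents precisely the machinery the paper outsources to Hedberg--Netrusov; the two-sided estimate you sketch (kernel averages $\lesssim$ oscillations and back) \emph{is} what their abstract result packages, so nothing is gained and the atomic reconstruction step (organizing $f-\Pi^{N-1}_{x,t}f$ into normalized Morrey--Besov atoms on a dyadic grid, with no compact support to start from) is a non-trivial piece of work you leave unaddressed.

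The real problem is part~(ii), and specifically the direction ``finite intrinsic oscillation quasi-norm $\implies f\in\mathcal{N}^s_{u,p,q}(\Omega)$.'' You propose a Whitney-type extension gluing local best polynomials against a Whitney partition on $\R\setminus\Omega$ and then verifying the extension lies in $\mathcal{N}^s_{u,p,q}(\R)$. That is not what the paper does, and it is far from routine for Morrey-based scales. Two concrete obstacles: first, the intrinsic oscillations $\osc^{N-1}_{v,\Omega}f(x,t)$ average only over $B(x,t)\cap\Omega$ and do not directly control the full-space oscillations of the Whitney extension near $\partial\Omega$ without a stability argument for the local best polynomials that would itself have to be carried out in the Morrey norm $\mathcal{M}^u_p$; second, for special Lipschitz $\Omega$ the complement is unbounded and the Morrey norm sees all scales, so the Whitney extension must also be shown to decay suitably at infinity, a step with no analogue in the classical $L_p$-Besov setting. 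The paper avoids both issues entirely by staying intrinsic: it uses Yao's Littlewood--Paley type characterization \eqref{eq_LPTC1_new} of $\mathcal{N}^s_{u,p,q}(\Omega)$ with Triebel's distinguished kernels $k_j$ supported in $B(0,2^{-j}N)\cap(-K)$ (a reflected cone adapted to the Lipschitz graph), so that $k_j\ast f$ only reads off values of $f$ inside $\Omega$, and then passes from the ensuing difference quantities to oscillations via the quasi-optimal projections of \autoref{lem:opt_pol}. This is the key idea your proposal is missing. Also, as a minor point, the lower-bound direction (``$\norm{f\sep\mathcal{N}^s_{u,p,q}(\Omega)}$ dominates the oscillation quasi-norm'') does not require a bounded linear extension operator as you assume: since the quasi-norm on the restriction space is an infimum over all $g\in\mathcal{N}^s_{u,p,q}(\R)$ with $g|_\Omega=f$, it suffices to pick any near-optimal extension $F$, apply part~(i) to $F$, and restrict, exactly as in \autoref{prop:N_Omega_lower}.
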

Looking at Besov-type spaces $B^{s, \tau}_{p,q}(\Omega)$, for the special case $\Omega = \mathbb{R}^d$ some first results concerning oscillations can be found in \cite[Chapter 4.4.2]{ysy}. 
Our second main result extends these characterizations in the spirit of \autoref{mainresult1} above.

\begin{theorem}[Oscillation Characterizations for Besov-Type Spaces]\label{mainresult2}
    Let $d,N \in \mathbb{N}$ as well as $0< p  < \infty$, $0 \leq \tau < \frac{1}{p}$,  $0 < q,T,v \leq \infty$, and $0<R<\infty$. Further, assume that $s\in\re$ satisfies
    \begin{align*}
		d\, \max\! \left\{ 0, \frac{1}{p} - 1,  \frac{1}{p} - \frac{1}{v} \right\} < s < N.
    \end{align*}
    Then the following statements hold:
    \begin{enumerate}
        \item $B^{s, \tau}_{p,q}(\R)$ is the collection of all $f \in L^{\loc}_{\max\{1,p,v\}}(\R)$ for which
        \begin{align*}
        & \sup_{P\in\mathcal{Q}} \frac{1}{\abs{P}^\tau} \norm{ \bigg( \int_{B(\,\cdot\,,R)} \abs{f(y)}^v\d y \bigg)^{\frac{1}{v}} \sep L_p(P )} \\
        & \qquad \qquad \qquad + \sup_{P\in\mathcal{Q}} \frac{1}{\abs{P}^\tau} \bigg(\int_{0}^T t^{-sq} \norm{ \osc_{v}^{N-1} f(\cdot, t) \sep L_p(P) }^{q} \frac{\d t}{t} \bigg)^{\frac{1}{q}}
        \end{align*}
        is finite (equivalent quasi-norm).
        Moreover, the assertion remains valid when $ \sup_{P\in\mathcal{Q}}\limits \frac{1}{\abs{P}^\tau} \norm{ \Big( \int_{B(\,\cdot\,,R)} \abs{f(y)}^v\d y \Big)^{\frac{1}{v}} \sep L_p(P )}  $ is replaced by $ \sup_{P\in\mathcal{Q}}\limits \frac{1}{\abs{P}^\tau} \norm{ f \sep L_p(P)}   $.

        \item In addition assume that $v\geq 1$ and let $\Omega$ be either a special or a bounded Lipschitz domain in $\R$. 
        Then $B^{s, \tau}_{p,q}(\Omega)$ is the set of all $f \in L^{\loc}_{\max\{p,v\}}(\Omega)$ for which
        \begin{align*}
           & \sup_{P\in\mathcal{Q}} \frac{1}{\abs{P}^\tau} \norm{ \bigg( \int_{B(\,\cdot\,,R)\cap\Omega} \abs{f(y)}^v\d y \bigg)^{\frac{1}{v}} \sep L_p(P \cap \Omega )} \\
           & \qquad \qquad \qquad + \sup_{P\in\mathcal{Q}} \frac{1}{\abs{P}^\tau} \bigg(\int_{0}^T t^{-sq} \norm{ \osc_{v,\Omega}^{N-1} f(\cdot, t) \sep L_p(P\cap \Omega) }^{q} \frac{\d t}{t} \bigg)^{\frac{1}{q}}
        \end{align*}
        is finite (equivalent quasi-norm).
        If additionally $p\geq 1$, this statement remains true when $\sup_{P\in\mathcal{Q}}\limits \frac{1}{\abs{P}^\tau} \norm{ \Big( \int_{B(\,\cdot\,,R)\cap\Omega} \abs{f(y)}^v\d y \Big)^{\frac{1}{v}} \sep L_p(P \cap \Omega )}$ is replaced by $\sup_{P\in\mathcal{Q}}\limits \frac{1}{\abs{P}^\tau} \norm{ f \sep L_p(P\cap \Omega)}$.
      \end{enumerate}
      In all cases, the usual modifications have to be made if $q=\infty$ and/or $v=\infty$.
\end{theorem}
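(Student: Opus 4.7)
My plan is to mimic the proof of \autoref{mainresult1} for Besov-Morrey spaces, since the two scales admit structurally parallel Littlewood-Paley theories and atomic/molecular decompositions. The essential substitution is to replace the Morrey quasi-norm $\norm{\,\cdot\,\sep\mathcal{M}^u_p(\Omega)}$ throughout by the cube-supremum functional $\sup_{P\in\mathcal{Q}}\abs{P}^{-\tau}\norm{\,\cdot\,\sep L_p(P\cap\Omega)}$. Most of the pointwise and $L_p$-local estimates proven for \autoref{mainresult1} transfer cleanly once one takes suprema over cubes $P\in\mathcal{Q}$ with the normalizing factor $\abs{P}^{-\tau}$, since both functionals respond identically to pointwise domination and to dyadic localization.

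For part~(i), I would split into two inequalities. For the direct estimate (oscillations controlled by the $B^{s,\tau}_{p,q}(\R)$-norm), I would start from a smooth atomic decomposition $f=\sum_{j,k}\lambda_{j,k}a_{j,k}$ of $f\in B^{s,\tau}_{p,q}(\R)$ as provided by the Yang-Yuan/Sawano-Yang-Yuan theory and bound $\osc^{N-1}_v f(\cdot,t)$ dyadic layer-wise: on each scale the best polynomial in $\mathcal{P}_{N-1}$ absorbs the Taylor contribution of the smooth atoms, $s<N$ yielding summability at high frequencies while $s>d\max\{0,1/p-1,1/p-1/v\}$ yields summability at low frequencies and ensures the $L_v$-integrability. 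Afterwards one takes the $L_p(P)$-norm, the $|P|^{-\tau}$-weighted supremum over $P\in\mathcal{Q}$, and the $t^{-sq}\,\d t/t$-integral. For the converse (Hedberg-Netrusov) direction, I would employ Triebel's compactly supported reproducing kernels of sufficient cancellation order to re-express $\norm{f\sep B^{s,\tau}_{p,q}(\R)}$ as a dyadic integral of local $L_v$-quantities. Replacing these kernel convolutions by best polynomial approximations can only increase the integrand pointwise, producing $\osc^{N-1}_v f$; since the underlying kernel estimates are purely local, the $|P|^{-\tau}$-normalized $L_p(P)$-supremum passes through unchanged. Finally, the substitution of $\sup_P\abs{P}^{-\tau}\norm{(\int_{B(\,\cdot\,,R)}|f|^v)^{1/v}\sep L_p(P)}$ by $\sup_P\abs{P}^{-\tau}\norm{f\sep L_p(P)}$ for $p\geq 1$ follows from boundedness of the local averaging operator, which reduces to a Sawano-Tanaka-type vector-valued maximal inequality on Besov-type spaces.

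For part~(ii), I plan to reduce to part~(i) via extension. Under the Lipschitz assumption there exists a bounded linear extension operator $\mathfrak{E}\colon B^{s,\tau}_{p,q}(\Omega)\to B^{s,\tau}_{p,q}(\R)$ (of Rychkov or Triebel type, whose existence in the Besov-type/Morrey setting is either recorded earlier in the paper or quoted from the literature). The direct estimate then follows by applying part~(i) to $\mathfrak{E}f$ and dominating oscillations on $\Omega$ by oscillations of $\mathfrak{E}f$ on $\R$. For the converse, starting from a near-best polynomial approximant $\pi_{x,t}\in\mathcal{P}_{N-1}$ on $B(x,t)\cap\Omega$, one constructs an $\R$-approximant by trivial extension and uses the Lipschitz/cone geometry to compare $|B(x,t)\cap\Omega|$ with $|B(x,t)|$ up to constants depending only on $\Omega$, ensuring that integrals over $B(x,t)\cap\Omega$ and $B(x,t)$ are equivalent in the relevant range of~$t$.

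The main obstacle I foresee is in part~(ii) of the converse direction: correctly translating a local oscillation computed on $B(x,t)\cap\Omega$ into a quantity controlling an $\R$-based Besov-type norm of an extension of~$f$. The hypothesis $v\geq 1$ in part~(ii) is precisely what keeps the Minkowski inequality available when comparing local $L_v$-means on $\Omega$-balls with those on $\R$-balls; without it one would have to re-derive Triebel's kernel representation intrinsically on $\Omega$, thereby re-proving the entire Hedberg-Netrusov machinery inside the Lipschitz geometry—which would be considerably more delicate and likely require finer adapted partitions of unity near $\partial\Omega$.
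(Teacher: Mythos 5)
Your plan for part~(i) is a plausible but genuinely different route from the paper. The paper does not use atomic decompositions at all: it imports the Hedberg--Netrusov abstract theorem (via the verification in \cite{HoSi20} that $B^{s,\tau}_{p,q}(\R)$ fits the HN framework), which yields a characterization by dyadic-scale oscillations essentially for free (\autoref{thm_HN_B_osc1}); the proof of \autoref{thm_osc_B_Rda=2} is then only the discrete-to-continuous bridge, the treatment of the tail $t\gtrsim 1$ via covering by unit balls, and, for the main term, the identity $\sup_P |P|^{-\tau}\norm{\cdot\sep L_p(P)}\sim\norm{\cdot\sep\mathcal{M}^u_p}$ (\autoref{lem:discrete_morrey}). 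Your atomic approach would re-derive both directions by hand; this is heavier but not wrong in principle, and it avoids the HN black box.

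Part~(ii) contains a real gap. Reducing to $\R$ via a bounded extension operator $\mathfrak{E}$ works fine for the \emph{lower} bound (this is exactly \autoref{prop:B_Omega_lower}: any extension $F$ satisfies $\osc_{v,\Omega}^{N-1}f(x,t)\leq\osc_v^{N-1}F(x,t)$ for $x\in\Omega$). For the \emph{upper} bound it does not close: to estimate $\norm{\mathfrak{E}f\sep B^{s,\tau}_{p,q}(\R)}$ by the $\R$-oscillation quasi-norm of $\mathfrak{E}f$ and then pass to oscillations of $f$ on $\Omega$, you would need $\osc_v^{N-1}[\mathfrak{E}f](x,t)\lesssim\osc^{N-1}_{v,\Omega}f(\cdot,ct)$, but for balls near or crossing $\partial\Omega$ the quantity $\int_{B(x,t)}|\mathfrak{E}f-\pi|^v$ contains a contribution from $B(x,t)\setminus\Omega$ depending on the extension, not on $f|_\Omega$. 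Trivial extension by zero is the worst offender (it creates a jump at $\partial\Omega$), and measure comparability $|B(x,t)\cap\Omega|\sim|B(x,t)|$ does not repair this. The $v\geq 1$ assumption and Minkowski's inequality do not help either. The paper avoids this by working \emph{intrinsically} on the domain: the upper bound is proved on special Lipschitz domains via Yao's Littlewood--Paley characterization with Rychkov-type kernels supported in a reflected cone $-K$, so that $k_j\ast f(x)$ for $x\in\Omega$ samples $f$ only inside $\Omega$ (\autoref{prop:diff_special_Bstpq}); this is then converted to oscillations through \eqref{eq_diff_p_osc1} and the quasi-optimal projections of \autoref{lem:opt_pol} (\autoref{prop:osc_special_Bstpq}); finally, the bounded-Lipschitz case is handled by a partition of unity reducing to the special-Lipschitz and whole-space cases (\autoref{prop:osc_Bstpq}). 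Your proposal neither carries out the intrinsic kernel argument nor supplies the localization step, so the upper bound in part~(ii) remains unestablished.
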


When we concentrate on the case $\Omega = \mathbb{R}^d$ our Theorems \ref{mainresult1} and \ref{mainresult2} can be seen as continuations of \cite[Chapters 4.4.2 and 4.5.3]{ysy} and \cite[Section 8.2]{LiYYSaU}, to cover a larger range of the parameters and to provide  alternative quasi-norms. 
If, in contrast, $\Omega$ is a special or bounded Lipschitz domain and we do either have $p \neq u$ (in \autoref{mainresult1}) or $\tau \neq 0$ (in \autoref{mainresult2}), to the best of our knowledge there are no counterparts of Theorems~\ref{mainresult1} and~\ref{mainresult2} in the literature up to now. 
Let us stress that for $p = u$ and $\tau = 0$, respectively, we particularly recover and extend the known oscillation characterizations of classical Besov spaces~$B^{s}_{p,q}(\Omega)$ given, e.g., in \autoref{thm_hist1} above.

Another important topic of this paper are differences of higher order $\Delta^{N}_{h}f(x)$, see~\eqref{eq_diff_def} below for a precise definition. It is known since many years that local oscillations and differences are closely related to each other. Actually, differences are an important tool for some of our proofs in this paper. Indeed, as a byproduct we obtain new characterizations in terms of higher order differences for the scales of Besov-Morrey and Besov-type spaces. 
For the spaces $\mathcal{N}^{s}_{u,p,q}(\Omega)$ the corresponding result reads as follows.

\begin{theorem}[Difference Characterizations for Besov-Morrey Spaces]\label{thm_main_diff_1}
    Let $d,N \in \mathbb{N}$, $0< p \leq u < \infty$, $0 < q,T,v \leq \infty$, and $0<R<\infty$. Moreover, assume that $s\in\re$ satisfies
    \begin{align}\label{eq:parameter}
		d\, \max\! \left\{ 0, \frac{1}{p}-1,  \frac{1}{p} - \frac{1}{v} \right\} < s < N.
    \end{align}
    Then the following assertions hold true:
    \begin{enumerate}
        \item  $\mathcal{N}^s_{u,p,q}(\R)$ is the set of all $f \in L^{\loc}_{\max\{1,p,v\}}(\mathbb{R}^d)$ for which
        \begin{align*} 
            & \norm{\bigg(  \int_{B(\,\cdot\,, R)}     \abs{f(y)}^{v} \d y  \bigg)^{\frac{1}{v}}  \sep \mathcal{M}^{u}_{p}(\R)} \\
            & \qquad \qquad + \bigg( \int_{0}^{T} t^{-sq} \norm{ \Big( t^{-d} \int_{|h| < t} \abs{\Delta^{N}_{h}f(\cdot)}^v \d h \Big)^{\frac{1}{v}} \sep \mathcal{M}^{u}_{p}(\mathbb{R}^d) }^q \frac{\d t}{t} \bigg)^{\frac{1}{q}}
        \end{align*}
        is finite (equivalent quasi-norm). Furthermore, the assertion remains valid when $\norm{ \big( \int_{B(\,\cdot\,, R)} \abs{f(y)}^v \d y \big)^{\frac{1}{v}} \sep \mathcal{M}^{u}_{p}(\R)}$ is replaced by $\norm{f \sep \mathcal{M}^{u}_{p}(\R)}$.

        \item In addition, assume that $v\geq 1$ and let $\Omega$ be a special Lipschitz domain in $\R$. 
        Then $\mathcal{N}^s_{u,p,q}(\Omega)$ is the collection of all $f \in L^{\loc}_{\max\{p,v\}}(\Omega)$ for which
        \begin{align*} 
            & \norm{\bigg( \int_{B(\,\cdot\,, R) \cap \Omega}     \abs{f(y)}^{v} \d y  \bigg)^{\frac{1}{v}}  \sep \mathcal{M}^{u}_{p}(\Omega)} \\
            & \qquad \qquad +  \bigg( \int_{0}^{T} t^{-sq} \norm{ \Big( t^{-d} \int_{V^{N}(\,\cdot\,,t)} \abs{\Delta^{N}_{h}f(\cdot)}^v \d h \Big)^{\frac{1}{v}} \sep \mathcal{M}^{u}_{p}(\Omega) }^q \frac{\d t}{t} \bigg)^{\frac{1}{q}}
        \end{align*}
        is finite (equivalent quasi-norm).
        If additionally $p\geq 1$, this statement remains true when $\norm{ \big( \int_{B(\,\cdot\,, R)\cap\Omega} \abs{f(y)}^v \d y \big)^{\frac{1}{v}} \sep \mathcal{M}^{u}_{p}(\Omega)}$ is replaced by $\norm{f \sep \mathcal{M}^{u}_{p}(\Omega)}$.

        \item  Additionally, assume that $\Omega$ is a bounded convex Lipschitz domain in $\R$ and let $p>1$ as well as $v=\infty$ such that \eqref{eq:parameter} reduces to $ \frac{d}{p} < s < N$.
        Then $\mathcal{N}^s_{u,p,q}(\Omega)$ consists of all $f \in L^\loc_\infty(\Omega)$ for which
        \begin{align*}
            \norm{ \esssup_{y\in B(\,\cdot\,, R) \cap \Omega} \abs{f(y)} \sep  \mathcal{M}^{u}_{p}( \Omega)} + \bigg( \int_{0}^{T} t^{-sq} \norm{ \esssup_{h\in V^N(\,\cdot\,, t)} \abs{\Delta_h^N f(\cdot)} \sep \mathcal{M}^{u}_{p}(\Omega)}^{q} \frac{\d t}{t} \bigg)^{\frac{1}{q}} 
        \end{align*}
        is finite (equivalent quasi-norm).
        Moreover, the assertion remains valid when 
        $\norm{ \esssup_{y\in B(\,\cdot\,, R) \cap \Omega} \abs{f(y)} \sep  \mathcal{M}^{u}_{p}( \Omega)}$ is replaced by $\norm{ f \sep  \mathcal{M}^{u}_{p}( \Omega)}$.
    \end{enumerate}
    Therein we set $V^{N}(x,t) := \{ h \in \mathbb{R}^d \,:\,  \abs{h} < t \ \mbox{and} \ x + \ell h \in \Omega \ \mbox{for all} \ 0 \leq \ell \leq N\}$ for $t>0$.
    In all cases, the usual modifications have to be made if $q=\infty$ and/or $v=\infty$.
\end{theorem}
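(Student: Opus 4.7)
The plan is to deduce this theorem from the oscillation characterization already proven in \autoref{mainresult1}. The key observation is that, pointwise almost everywhere, the two quantities
\begin{align*}
a_t(x) := \osc_{v,\Omega}^{N-1} f(x,t) \qquad\text{and}\qquad b_t(x) := \bigg(t^{-d}\int_{V^N(x,t)} \abs{\Delta_h^N f(x)}^v \,\d h\bigg)^{\!1/v}
\end{align*}
are comparable up to a dilation $t \mapsto ct$ of the scale parameter. Since the $t$-weight $t^{-sq}\,\d t/t$ is homogeneous and the base term $\big\| \big(\int_{B(\,\cdot\,,R)\cap\Omega}\abs{f(y)}^v\,\d y\big)^{1/v}\sep\mathcal{M}^u_p(\Omega)\big\|$ appears identically in both theorems, such a pointwise equivalence converts the oscillation quasi-norm of \autoref{mainresult1} into the difference quasi-norm claimed here.

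The first pointwise inequality, $b_t(x)\lesssim a_{Nt}(x)$, uses that $\Delta_h^N$ annihilates $\mathcal{P}_{N-1}$. Hence $\Delta_h^N f(x)=\Delta_h^N(f-\pi)(x)$ for any $\pi\in\mathcal{P}_{N-1}$; expanding this as a linear combination of the $N+1$ values $(f-\pi)(x+\ell h)$, applying the $(\min\{1,v\})$-quasi-triangle inequality, integrating over $h\in V^N(x,t)$, and then substituting $y=x+\ell h$ in each resulting term yields
\begin{align*}
t^{-d}\int_{V^N(x,t)} \abs{\Delta_h^N f(x)}^v \,\d h \;\lesssim\; (Nt)^{-d}\int_{B(x,Nt)\cap\Omega} \abs{f(y)-\pi(y)}^v\,\d y.
\end{align*}
Taking the infimum over~$\pi$ gives the claim. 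Conversely, to bound $a_t(x)$ by an averaged version of $b_{ct}(x)$, one invokes a Whitney-type approximation lemma: on~$\R$ there exists $\pi_x^{(t)}\in\mathcal{P}_{N-1}$ satisfying
\begin{align*}
\bigg(t^{-d}\int_{B(x,t)} \abs{f(y)-\pi_x^{(t)}(y)}^v\,\d y\bigg)^{\!1/v} \;\lesssim\; \bigg(t^{-d}\int_{\abs{h}<ct}\abs{\Delta_h^N f(x)}^v\,\d h\bigg)^{\!1/v},
\end{align*}
which directly controls $a_t(x)$ by $b_{ct}(x)$; on a special Lipschitz domain the same estimate holds with $B(\,\cdot\,,\cdot\,)\cap\Omega$ and $V^N(x,ct)$ by an analogous construction involving interior cones.

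With both pointwise inequalities in hand, monotonicity of $\norm{\,\cdot\,\sep\mathcal{M}^u_p(\Omega)}$ together with the homogeneous substitution $t\mapsto ct$ in the outer $(0,T)$-integral (absorbed because any $T>0$ gives an equivalent quasi-norm in \autoref{mainresult1}) immediately delivers parts~(i) and~(ii). For part~(iii), convexity of~$\Omega$ forces $V^N(x,t)=\{\abs{h}<t:\,x+Nh\in\Omega\}$, since $x+\ell h$ lies on the segment $[x,x+Nh]\subset\overline{\Omega}$; moreover, the hypotheses $p>1$, $v=\infty$, and $s>d/p$ place us in a Sobolev-embedding regime where $f$ is locally essentially bounded. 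Both pointwise inequalities then pass to their $v=\infty$ limits, with essential suprema replacing $L_v$-averages, yielding the stated quasi-norm.

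The main obstacle is the Whitney-type inequality on a special Lipschitz domain~$\Omega$ (Step~2 above), where the polynomial approximating $f$ on a small interior ball has to be transferred to the full intersection $B(x,t)\cap\Omega$ near boundary points. The standard remedy is a Boman-type chain of overlapping balls reaching from the interior to~$x$, combined with the difference bound from Step~1 to control the change of polynomial across the chain; the precise threshold $s>d\max\{0,1/p-1,1/p-1/v\}$ is exactly what is needed to guarantee that the resulting telescoping series converges in the $\mathcal{M}^u_p$-quasi-norm.
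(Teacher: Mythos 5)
The high-level idea — that the difference quasi-norm and the oscillation quasi-norm control each other, so part of \autoref{thm_main_diff_1} should follow from \autoref{mainresult1} — is correct and indeed underlies parts of the paper's argument, but the proposal contains genuine gaps in each of the three parts.

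First, the claimed pointwise Whitney estimate on~$\R$,
\[
\Big(t^{-d}\int_{B(x,t)}\abs{f(y)-\pi_x^{(t)}(y)}^v \d y\Big)^{1/v}
\lesssim \Big(t^{-d}\int_{\abs{h}<ct}\abs{\Delta_h^N f(x)}^v \d h\Big)^{1/v},
\]
is false for $N>1$. A counterexample is $f(y)=\abs{y}$ in $d=1$ with $N=2$ and $x=0$: then $\Delta_h^2 f(0)=\abs{2h}-2\abs{h}=0$ for every~$h$, so the right-hand side vanishes, while the left-hand side is strictly positive (the oscillation of $\abs{y}$ around the best affine fit on $(-t,t)$ is $\sim t$). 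Whitney's theorem (as cited from \cite[Theorem~A.1]{HN}) only gives an \emph{averaged} bound of the form $\osc_v^{N-1}f(x,t)\lesssim \big(t^{-d}\int_{B(x,Ct)} b_{ct}(y)^v\d y\big)^{1/v}$, where $b_{ct}(y)$ denotes the ball mean of differences at $y$. Converting this into a Morrey-norm bound requires the translation-invariance of $\mathcal{M}^u_p$; this is precisely what Substep~2a of the paper's proof of \autoref{thm_diff_Rda=2} carries out using \cite[Lemma~3(vii)]{HoWe23}. Your written argument skips this step.

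Second, and more seriously, the proposal for part~(ii) rests on a Whitney-type estimate on a special Lipschitz domain obtained via a ``Boman-type chain'', but this is only a sketch, and the issue it hand-waves over is exactly why the paper takes a different route. Near $\partial\Omega$ the admissible step set $V^N(x,t)$ is highly anisotropic, and it is not clear that a chain argument closes without losses that would spoil the range of parameters. The paper avoids the problem entirely: Proposition~\ref{prop:diff_special_Nsupq} uses Yao's intrinsic Littlewood-Paley characterization \cite[Theorem~1]{Yao} together with Triebel's distinguished kernels $(k_j)_j$ supported in a reflected cone $-K$ (so $k_j * f$ uses only interior values of~$f$), and the kernel representation \eqref{eq_kern_diff1_new}--\eqref{eq_domdif_j>J2} directly produces pointwise difference bounds $\abs{f_j(x)}\lesssim 2^{jd}\int_{V^N(x,2^{-j})}\abs{\Delta_h^N f(x)}\d h$ for $j$ large, no Whitney-type estimate on~$\Omega$ needed. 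If you want to substitute your Boman-chain approach, you would need to actually prove the domain Whitney estimate, which would be a nontrivial separate result.

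Third, for part~(iii) the proposal omits the role of the Hardy--Littlewood maximal operator and hence misses why $p>1$ is required. The Dekel--Leviatan Whitney estimate for convex domains yields an inequality of the form $\osc_{1,\Omega}^{N-1}f(x,t)\leq c\,\textbf{M}\big(\chi_\Omega(\cdot)\esssup_{h\in V^N(\cdot,2t)}\abs{\Delta_h^N f(\cdot)}\big)(x)$ (the paper's \eqref{eq:proof_whitney}), so one must use boundedness of $\textbf{M}$ on $\mathcal{M}^u_p(\R)$, which holds only for $p>1$. The ``Sobolev-embedding regime'' comment is beside the point since $f\in L^\loc_\infty(\Omega)$ is already part of the hypothesis. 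In summary, part~(i) is salvageable with the averaged-Whitney-plus-translation-invariance fix, but parts~(ii) and~(iii) as outlined do not constitute complete proofs and the paper's arguments (Littlewood--Paley kernels for~(ii); Dekel--Leviatan plus maximal function for~(iii)) are substantially different.
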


\autoref{thm_main_diff_1} above can be seen as a continuation of \cite{HoN}, see also \cite[Chapter~5.6]{H21}. Earlier results concerning Besov-Morrey spaces and differences can be found in \cite[Section~4.5.2]{ysy}. 
In the course of this paper we also provide corresponding new characterizations in terms of differences for Besov-type spaces~$B^{s, \tau}_{p,q}(\Omega)$. 
There is the following result.

\begin{theorem}[Difference Characterizations for Besov-Type Spaces]\label{thm_main_diff_2}
    Let $d,N \in \mathbb{N}$ as well as $0< p  < \infty$, $ 0 \leq \tau < \frac{1}{p} $, $0 < q,T,v \leq \infty$, and $0<R<\infty$. Further, assume that $s\in\re$ satisfies
    \begin{align}\label{eq:parameter2}
		d\, \max\! \left\{ 0, \frac{1}{p}-1,  \frac{1}{p} - \frac{1}{v} \right\} < s < N.
    \end{align}
    Then the following assertions hold true:
    \begin{enumerate}
        \item  $B^{s, \tau}_{p,q}(\R)$ is the set of all $f \in L^{\loc}_{\max\{1,p,v\}}(\mathbb{R}^d)$ for which
        \begin{align*} 
            &  \sup_{P\in\mathcal{Q}} \frac{1}{\abs{P}^\tau} \norm{ \bigg( \int_{B(\,\cdot\,,R)} \abs{f(y)}^v\d y \bigg)^{\frac{1}{v}} \sep L_p(P )}    \\
            & \qquad \qquad + \sup_{P\in\mathcal{Q}} \frac{1}{\abs{P}^\tau} \bigg( \int_{0}^{T} t^{-sq} \norm{ \Big( t^{-d} \int_{|h| < t} \abs{\Delta^{N}_{h}f(\cdot)}^v \d h \Big)^{\frac{1}{v}} \sep L_{p}(P) }^q \frac{\d t}{t} \bigg)^{\frac{1}{q}}
        \end{align*}
        is finite (equivalent quasi-norm). Moreover, the assertion remains valid when $ \sup_{P\in\mathcal{Q}}\limits \frac{1}{\abs{P}^\tau} \norm{ \Big( \int_{B(\,\cdot\,,R)} \abs{f(y)}^v\d y \Big)^{\frac{1}{v}} \sep L_p(P )}$ is replaced by $ \sup_{P\in\mathcal{Q}}\limits \frac{1}{\abs{P}^\tau} \norm{ f \sep L_p(P)}$.

        \item In addition, assume that $v\geq 1$ and let $\Omega$ be a special Lipschitz domain in $\R$. 
        Then $B^{s, \tau}_{p,q}(\Omega)$ is the collection of all $f \in L^{\loc}_{\max\{p,v\}}(\Omega)$ for which
        \begin{align*} 
            &\sup_{P\in\mathcal{Q}} \frac{1}{\abs{P}^\tau} \norm{ \bigg( \int_{B(\,\cdot\,,R)\cap\Omega} \abs{f(y)}^v\d y \bigg)^{\frac{1}{v}} \sep L_p(P \cap \Omega )}    \\
            & \qquad \qquad +  \sup_{P\in\mathcal{Q}} \frac{1}{\abs{P}^\tau} \bigg( \int_{0}^{T} t^{-sq} \norm{ \Big( t^{-d} \int_{V^{N}(\,\cdot\,,t)} \abs{\Delta^{N}_{h}f(\cdot)}^v \d h \Big)^{\frac{1}{v}} \sep L_{p}(P\cap \Omega) }^q \frac{\d t}{t} \bigg)^{\frac{1}{q}}
        \end{align*}
        is finite (equivalent quasi-norm).
        If additionally $p\geq 1$, this statement remains true when $\sup_{P\in\mathcal{Q}}\limits\frac{1}{\abs{P}^\tau} \norm{ \Big( \int_{B(\,\cdot\,,R)\cap\Omega} \abs{f(y)}^v\d y \Big)^{\frac{1}{v}} \sep L_p(P \cap \Omega )}   $ is replaced by $\sup_{P\in\mathcal{Q}}\limits \frac{1}{\abs{P}^\tau} \norm{ f \sep L_p(P\cap \Omega)}$.

        \item  Additionally, assume that $\Omega$ is a bounded convex Lipschitz domain in $\R$ and let $p>1$ as well as $v=\infty$ such that \eqref{eq:parameter2} reduces to $\frac{d}{p} < s < N$.
        Then $B^{s, \tau}_{p,q}(\Omega)$ consists of all $f \in L^\loc_\infty(\Omega)$ for which
        \begin{align*}
            &\sup_{P\in\mathcal{Q}} \frac{1}{\abs{P}^\tau} \norm{ \esssup_{y\in B(\,\cdot\,, R) \cap \Omega} \abs{f(y)} \sep  L_{p}( P\cap \Omega)} \\
            &\qquad\qquad + \sup_{P\in\mathcal{Q}} \frac{1}{\abs{P}^\tau} \bigg( \int_{0}^{T} t^{-sq} \norm{ \esssup_{h\in V^N(\,\cdot\,, t)} \abs{\Delta_h^N f(\cdot)} \sep L_{p}(P\cap \Omega)}^{q} \frac{\d t}{t} \bigg)^{\frac{1}{q}} 
        \end{align*}
        is finite (equivalent quasi-norm).
        Moreover, the assertion remains valid when 
        $\sup_{P\in\mathcal{Q}}\limits \frac{1}{\abs{P}^\tau} \norm{ \esssup_{y\in B(\,\cdot\,, R) \cap \Omega} \abs{f(y)} \sep  L_{p}(P\cap \Omega)}$ is replaced by $\sup_{P\in\mathcal{Q}}\limits \frac{1}{\abs{P}^\tau} \norm{ f \sep L_p(P\cap \Omega)}$.
    \end{enumerate}
    In all cases, the usual modifications have to be made if $q=\infty$ and/or $v=\infty$.
\end{theorem}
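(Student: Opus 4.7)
The plan is to deduce \autoref{thm_main_diff_2} from the already available oscillation characterization in \autoref{mainresult2} by establishing \emph{local} (i.e.\ pointwise in $x$ and $t$) two-sided estimates between
\begin{align*}
\osc_{v,\Omega}^{N-1}f(x,t)
\qquad\text{and}\qquad
D_{v,\Omega}^{N}f(x,t):=\bigg(t^{-d}\int_{V^{N}(x,t)}\abs{\Delta_h^N f(x)}^v\d h\bigg)^{\frac{1}{v}},
\end{align*}
where $V^N(x,t)=\{h\in\R:\abs{h}<t\}$ if $\Omega=\R$. After these local equivalences are in place, applying the $L_p(P\cap\Omega)$-quasi-norm followed by the $\tau$-weighted supremum over $P\in\cal Q$ directly transforms the oscillation quasi-norm of \autoref{mainresult2} into the difference quasi-norm claimed here. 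The lower-order ball-average term $\bigl(\int_{B(\cdot,R)\cap\Omega}\abs{f}^v\bigr)^{1/v}$ is kept unchanged, and the transition to $\norm{f\sep L_p(P\cap\Omega)}$ under the additional assumption $p\geq 1$ is copied verbatim from the corresponding argument in the proof of \autoref{mainresult2} (it uses only Hardy-Littlewood maximal bounds on~$L_p$ and hence carries over).

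The estimate $D_{v,\Omega}^N f(x,t)\lesssim \osc_{v,\Omega}^{N-1}f(x,ct)$ is elementary: for every $\pi\in\mathcal{P}_{N-1}$ one has $\Delta_h^N f(x)=\Delta_h^N(f-\pi)(x)=\sum_{\ell=0}^N\binom{N}{\ell}(-1)^{N-\ell}(f-\pi)(x+\ell h)$, so expanding, applying Minkowski/quasi-triangle inequality in $L_v(V^N(x,t),\d h)$, and using the fact that for $h\in V^N(x,t)$ the point $x+\ell h$ lies in $B(x,Nt)\cap\Omega$ yields, after taking the infimum over~$\pi$,
\begin{align*}
D_{v,\Omega}^N f(x,t)\ls \osc_{v,\Omega}^{N-1}f(x,Nt).
\end{align*}
For the converse $\osc_{v,\Omega}^{N-1}f(x,t)\lesssim D_{v,\Omega}^N f(x,ct)$ we invoke a local Whitney-type polynomial approximation inequality on $B(x,t)\cap\Omega$: on a (John) Lipschitz domain one has $E_{N-1}(f,B(x,t)\cap\Omega)_v\lesssim \bigl(t^{-d}\int_{V^N(x,c_0t)}\int_{B(x,t)\cap\Omega}\abs{\Delta_h^N f(y)}^v\d y\d h\bigr)^{1/v}$ for some $c_0=c_0(N,\Omega)$. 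Such inequalities are part of standard approximation theory on domains and are collected, e.g., in Brudny{\u\i}, DeVore--Popov \cite{DePo} and Triebel \cite[Section~3.5]{Tr92}; for the Morrey-/Besov-type level we can alternatively transport them from the classical case via the isotropic rescaling $y\mapsto x+ty$ used also in \autoref{thm_main_diff_1}. Shifting the centre from $y$ to $x$ in the inner integral and then renaming gives the pointwise bound with an additional shift $t\to ct$, which is absorbed by the $t$-substitution in the outer weighted integral.

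With these two pointwise-in-$t$ estimates, parts (i) and (ii) follow by inserting them into the quasi-norm of \autoref{mainresult2} and performing the standard substitution $t\mapsto ct$ in the Hardy-type integral (respectively, noting the elementary monotonicity of the supremum in $T$). The passage from $\R$ to a special Lipschitz domain in (ii) requires no new ideas beyond those already present in the corresponding step of \autoref{mainresult2}: the hypothesis $v\geq 1$ enters exactly where it does there, namely to guarantee that a universal linear extension operator $\cal E\colon B^{s,\tau}_{p,q}(\Omega)\to B^{s,\tau}_{p,q}(\R)$ is available and that the $L_v$-averages on $V^N$ can be compared to the corresponding averages on balls after restriction to $\Omega$; the bounded Lipschitz case is then obtained via a finite localisation based on a partition of unity adapted to a covering by special Lipschitz pieces.

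The main obstacle is part (iii), the $v=\infty$ essential-supremum version on a bounded \emph{convex} Lipschitz domain. Here the two local equivalences above must be upgraded to their $L_\infty(V^N)$-analogues, and it is precisely at this point that convexity is indispensable: only then can one show that $V^N(x,t)$ contains a (one-sided) ball of radius $\gtrsim t$ for all $x\in\Omega$ and all sufficiently small $t>0$, which is the geometric ingredient needed so that the Whitney-type inequality survives in the sup-norm (without convexity, near inward cusps $V^N(x,t)$ could be too thin to support a polynomial approximation estimate). Once this geometric lemma is established, the argument proceeds as in (ii), replacing all integrals in the $h$-variable by essential suprema and using the characterization provided by \autoref{mainresult2} with $v=\infty$; the integrability condition on $f$ in (iii), $f\in L^\loc_\infty(\Omega)$, matches the natural domain of definition of the essential suprema on both sides. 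I expect this convexity/geometry step, together with the careful verification that the Whitney inequality extends to the Morrey-type setting with uniform constants in the auxiliary cubes $P\in\cal Q$, to be the only genuinely non-routine part of the proof.
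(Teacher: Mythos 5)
Your plan to deduce the difference characterization from the oscillation characterization via two pointwise (in $x$ and $t$) comparisons does not match the route the paper takes for parts (i) and (ii), and it contains genuine gaps. The paper's logical order on special Lipschitz domains is in fact the \emph{opposite} of yours: the key step (\autoref{prop:diff_special_Bstpq}) is an upper bound $\norm{f\sep B^{s,\tau}_{p,q}(\Omega)}\lesssim \norm{\,\cdot\,}_\Delta$ proved \emph{directly} via an intrinsic Littlewood--Paley characterization due to Yao (built from kernels $k_j$ supported in a cone $-K$ adapted to the Lipschitz graph, constructed following Triebel), so that \eqref{eq_kern_diff1_new}--\eqref{eq_domdif_j>J2} convert $k_j\ast f$ into averaged $N$-th differences with steps in $V^N(\cdot,2^{-j})$ \emph{by construction}, without any Whitney-type polynomial approximation on $\Omega$. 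The oscillation characterization on $\Omega$ (\autoref{prop:osc_special_Bstpq}) is then \emph{derived from} the difference one, not the other way around.

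Two concrete problems with your two local comparisons. First, the asserted ``elementary'' bound $D^N_{v,\Omega}f(x,t)\lesssim\osc^{N-1}_{v,\Omega}f(x,Nt)$ is not a pointwise statement: the $\ell=0$ term in $\Delta_h^N(f-\pi)(x)=\sum_\ell\binom{N}{\ell}(-1)^{N-\ell}(f-\pi)(x+\ell h)$ produces $\abs{f(x)-\pi(x)}$, which does not disappear after taking the infimum over $\pi$. The paper deals with this by choosing the quasi-optimal $\Pi^{N-1}_{x,t}f$ of \autoref{lem:opt_pol} and then estimating $\abs{f(x)-\Pi^{N-1}_{x,t}f(x)}\lesssim\sum_{\ell\ge0}\osc^{N-1}_{1,\Omega}f(x,2^{-\ell}t)$, a sum over \emph{all} finer scales whose convergence uses $s>0$ (see \eqref{eq:proof_osc_N}). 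Second, your converse bound $\osc^{N-1}_{v,\Omega}f(x,t)\lesssim D^N_{v,\Omega}f(x,ct)$ would require a Whitney inequality on a general (non-convex) special Lipschitz domain with difference steps restricted to $V^N(x,ct)$ -- and this is precisely the hard geometric fact. You correctly identify convexity as the crucial ingredient for part (iii), but you do not notice that the same obstruction already blocks your plan for part (ii): a special Lipschitz domain is in general not convex, and the set $V^N(x,t)$ can be very thin near the boundary. The paper circumvents this in (ii) by the cone-supported kernel device, and only invokes a (Dekel--Levitan) Whitney estimate in part (iii), where convexity is assumed -- and even there the Whitney bound \eqref{eq:proof_whitney} is not pointwise but comes with an application of the Hardy--Littlewood maximal operator, subsequently controlled by a vector-valued maximal inequality on $L_p$ with $p>1$; this is why (iii) requires $p>1$ and $v=\infty$.

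Finally, your explanation of the role of $v\ge1$ is off: it has nothing to do with the existence of extension operators (the Rychkov/Yao universal extension works for all admissible $p,q$), but is used so that Hölder reduces everything to the $v=1$ case; and part (ii) of the theorem is stated only for special, not bounded, Lipschitz domains, so the ``finite localisation'' you mention is not relevant there.
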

Note that \autoref{thm_main_diff_2} extends some results of \cite{HoSi20}; see also \cite[Chapter 5.8]{H21}. Earlier results concerning Besov-type spaces and differences can be found in \cite[Chapter~4.3.2]{ysy}, \cite{ZSYY1} and in \cite{Dri1}.

This paper is organized as follows. In \autoref{sec_2_definitions} we recall the definitions of Besov-Morrey and Besov-type spaces defined on both $\mathbb{R}^d$ and domains. Moreover, we collect some useful properties of these spaces that are used in our proofs later. 
\autoref{sec_3_norms} contains the definitions of our new quasi-(semi-)norms which are frequently used later on. Here also local oscillations and differences of higher order are defined properly. 
Then \autoref{sec_char_Rd} is devoted to characterizations of $\mathcal{N}^{s}_{u,p,q}(\mathbb{R}^d)$ and $B^{s, \tau}_{p,q}(\mathbb{R}^d)$ in terms of local oscillations and higher order differences. The core of the paper is given by \autoref{sect:characterizations_domains}, where we derive equivalent descriptions via oscillations and higher order differences for the spaces $\mathcal{N}^{s}_{u,p,q}(\Omega)$ and $B^{s, \tau}_{p,q}(\Omega )$ defined on special or bounded Lipschitz domains $\Omega$. 
Finally, the proofs of Theorems \ref{mainresult1}--\ref{thm_main_diff_2} are given in \autoref{sect:proofs}.
However, first of all we shall fix some notation.

\medskip

\noindent\textbf{Notation.} As usual, $\N$ denotes the natural numbers, $\N_0:=\N\cup\{0\}$, $\zz$ describes the integers and~$\re$ the real numbers. Further, $\R$ with $d\in\N$ denotes the $d$-dimensional Euclidean space and we put
$$
    B(x,t) := \left\{y\in \R \,: \abs{x-y}< t \right\}\, , \qquad x \in \R,\,\; t>0.
$$
All functions are assumed to be complex-valued, i.e.\ we consider functions $f\colon \R \to \com$. 
We let $\mathcal{S}(\R)$ be the collection of all Schwartz functions on $\R$ endowed with the usual topology and by $\mathcal{S}'(\R)$ we denote its topological dual, the space of all bounded linear functionals on~$\mathcal{S}(\R)$ equipped with the weak-$\ast$ topology. 
The symbol $\cf$ refers to the Fourier transform and $\cfi$ to its inverse, both defined on $\cs'(\R)$. 
For domains (open connected sets) $\Omega\subseteq\R$ and $0<v\leq \infty$, by $L_v^\loc(\Omega)$ we mean the set of locally $v$-integrable (or locally essentially bounded) functions on $\Omega$. Furthermore, $\mathcal{D}(\Omega)=C_0^\infty(\Omega)$ denotes the set of infinitely often differentiable functions with compact support on $\Omega$. 
Its topological dual, $\mathcal{D}'(\Omega)$, is the space of distributions on $\Omega$.
Almost all function spaces considered in this paper are subspaces of regular distributions from $\cs'(\R)$ or $\mathcal{D}'(\Omega)$, interpreted as spaces of equivalence classes with respect to almost everywhere equality. 
Given two quasi-Banach spaces $X$ and $Y$, the norm of a linear operator $T\colon X\to Y$ is denoted by $\norm{T \sep \cl (X,Y)}$. 
Moreover, we write $X \hookrightarrow Y$ if the natural embedding of $X$ into $Y$ is continuous. 
For $0<p<\infty$ we shall use the well-established quantity
$$
   \sigma_p:= d\,  \max\!\left\{0, \frac 1p - 1\right\}.
$$
The symbols $C, C_1, c, c_{1}, \ldots$ denote positive constants depending only on the fixed para\-meters and probably on auxiliary functions. 
Unless otherwise stated their values may vary from line to line. 
With $A \lesssim B$ we mean $ A \leq C B  $ for a constant $ C > 0 $ independent of $A$ and $B$. The notation $ A \sim B $ stands for $A \lesssim B$ and $B \lesssim A$.

\section{Function Spaces}\label{sec_2_definitions}
\subsection{Morrey Spaces \texorpdfstring{$\mathcal{M}^{u}_{p}(\mathbb{R}^d)$}{Mup(Rd)} and Further Basic Definitions}

Besov-Morrey and Besov-type spaces are built upon Morrey spaces which can be defined as follows.

\begin{defi}[Morrey Spaces]\label{def_mor}
For $d\in\N$ and $ 0 < p \leq u < \infty$ the Morrey space~$ \mathcal{M}^{u}_{p}(\R)$ is the set of all $ f \in L_{p}^{\loc}(\R) $ such that 
\begin{align*}
\norm{ f \sep \mathcal{M}^{u}_{p}(\R) } := \sup_{y \in \R, r > 0} \abs{ B(y,r) }^{\frac{1}{u}-\frac{1}{p}} \Big ( \int_{B(y,r)} \abs{ f(x) }^{p} \d x      \Big )^{\frac{1}{p}} < \infty.
\end{align*} 
\end{defi}
Morrey spaces have been introduced already in 1938 by Morrey~\cite{Mor}. They are quasi-Banach spaces and even Banach spaces if $ p \geq 1$. Note that an equivalent quasi-norm can be obtained by replacing the balls in \autoref{def_mor} by dyadic cubes. For $ 0 < p_{2} \leq p_{1} \leq u < \infty $ we have
\begin{align*}
L_{u}(\R) = \mathcal{M}^{u}_{u}(\R) \hookrightarrow   \mathcal{M}^{u}_{p_{1}}(\R)  \hookrightarrow  \mathcal{M}^{u}_{p_{2}}(\R),
\end{align*}
such that Morrey spaces can be viewed as a refinement of the Lebesgue spaces.
However, for $ p \not = u   $ the spaces $ \mathcal{M}^{u}_{p}(\R)$ are much more complicated than $L_p(\R)$. 

To properly define Besov-Morrey and Besov-type spaces we require a so-called smooth dyadic decomposition of unity. 
For that purpose let $ \psi \in C_0^{\infty}(\mathbb{R}^d)$ be a non-negative function such that $\psi (x) = 1$ if $|x|\leq 1$ and $ \psi (x) = 0$ if $|x| \geq \frac{3}{2}$. 
Then we define a family of functions $ ( \varphi_k )_{k \in \mathbb{N}_{0}}  $ by $ \varphi_{0} := \psi$ and $ \varphi_k(x) := \varphi_0(2^{-k}x)-\varphi_0(2^{-k+1}x) $ for $k\in \mathbb{N}$ and $ x \in \R $. These functions have some special properties. For their supports we observe $ \supp \varphi_k \subset \{ x\in \R: \: 2^{k-1}\le |x|\le 3 \cdot 2^{k-1} \} $ if $ k \in \N $. Moreover, there is the identity $\sum_{k=0}^\infty \varphi_k(x) = 1 $ for $  x\in \R $.
Based on this every $f \in \mathcal{S}'(\mathbb{R}^{d})$ can be decomposed into a series of building blocks $ \cfi[\varphi_{k}\, \cf f]$, $k\in\N_0$, where by the Paley-Wiener-Schwartz Theorem each of them can be interpreted as a smooth function for any $ f \in \mathcal{S}'(\mathbb{R}^{d})$. 

\subsection{Besov-Morrey Spaces \texorpdfstring{$\mathcal{N}^{s}_{u,p,q}(\mathbb{R}^d)$}{Nsupq(Rd)}}

Now we are prepared to define the Besov-Morrey spaces $\mathcal{N}^{s}_{u,p,q}(\mathbb{R}^{d})$.

\begin{defi}[Besov-Morrey Spaces]\label{def_bms}
	Let $ (\varphi_{k})_{k\in \N_0 }$ be a smooth dyadic decomposition of unity in $\R$, $d\in\N$. Then, for $ s \in \mathbb{R}$, $ 0 < p \leq u < \infty $, and $ 0 < q \leq \infty $ the Besov-Morrey space $  \mathcal{N}^{s}_{u,p,q}(\mathbb{R}^{d}) $ is the collection of all distributions $ f \in \mathcal{S}'(\mathbb{R}^{d})$ such that
\begin{align*} 
    \norm{ f \sep \mathcal{N}^{s}_{u,p,q}(\mathbb{R}^{d}) } 
    :=  \bigg ( \sum_{k = 0}^{\infty} 2^{ksq}   \norm{ \mathcal{F}^{-1}[\varphi_{k} \mathcal{F}f]  \sep \mathcal{M}^{u}_{p}(\R)   }^{q} \bigg)^{\frac{1}{q}} < \infty .
\end{align*}
In the case $ q = \infty $ the usual modifications are made.
\end{defi}

In what follows we collect some basic properties of these Besov-Morrey spaces. 
Most of them are well-known, but will be important for our later considerations. First of all, for every $  s \in \mathbb{R}  $, $ 0 < p \leq u < \infty $, and $ 0 < q \leq \infty $ the space $  \mathcal{N}^{s}_{u,p,q}(\mathbb{R}^{d}) $ is independent of the chosen smooth dyadic decomposition of unity in the sense of equivalent quasi-norms; see \cite[Theorem 2.8]{TangXu}. 
Moreover, all spaces $  \mathcal{N}^{s}_{u,p,q}(\mathbb{R}^{d}) $ are quasi-Banach spaces. For $ p \geq 1 $ and $ q \geq 1  $ they are Banach spaces; see \cite[Corollary~2.6]{KoYa}. 
According to \cite[Theorem~3.2]{SawTan}
it holds $\mathcal{S}(\mathbb{R}^{d}) \hookrightarrow    \mathcal{N}^{s}_{u,p,q}(\mathbb{R}^{d}) \hookrightarrow   \mathcal{S}'(\mathbb{R}^{d})$. 
Furthermore, for $u = p$ we recover the original Besov spaces, namely we have $  \mathcal{N}^{s}_{p,p,q}(\mathbb{R}^{d}) = B^{s}_{p,q}(\R)$; cf.\ \cite[Proposition 3.6]{SawTan}. 

For us it will be important to know, whether a distribution that belongs to some~$\mathcal{N}^{s}_{u,p,q}(\mathbb{R}^{d})   $ can be interpreted as a locally integrable function. 
In this regard there is the following observation. For a somewhat sharper result that also covers the limiting case we refer to \cite[Theorems~3.2 and 3.4]{HaMoSk2}.

\begin{lemma}[{\cite[Theorem 3.3]{HaMoSk}}]\label{lem_si_nlc}
Let $  s \in \mathbb{R}  $, $ 0 < p \leq u < \infty$, and $ 0 < q \leq \infty   $. Then
\begin{align*}
\mathcal{N}^{s}_{u,p,q}(\mathbb{R}^{d}) \subset L_{1}^{\loc}(\R) \quad   \mbox{if} \quad  s > \frac{p}{u} \sigma_{p} \quad \mbox{and} \quad \mathcal{N}^{s}_{u,p,q}(\mathbb{R}^{d}) \not \subset L_{1}^{\loc}(\R)  \quad  \mbox{if}  \quad  s < \frac{p}{u} \sigma_{p}.
\end{align*}
\end{lemma}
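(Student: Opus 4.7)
The plan is to treat the two halves of the statement separately, both based on the smooth dyadic decomposition $f=\sum_{k\geq 0} f_k$ with $f_k := \mathcal{F}^{-1}[\varphi_k\mathcal{F}f]$. By the Paley-Wiener-Schwartz theorem each $f_k$ is entire of exponential type $\lesssim 2^k$, and by definition of $\mathcal{N}^s_{u,p,q}(\R)$ the sequence $\bigl(2^{ks}\|f_k\sep\mathcal{M}^u_p(\R)\|\bigr)_k$ belongs to $\ell_q$.

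For the positive direction ($s>\tfrac{p}{u}\sigma_p$) the goal is to show $\sum_k\|f_k\sep L_1(K)\|<\infty$ for every compact set $K\subset\R$; this yields both that the sum converges in $L_1^\loc(\R)$ and that its limit equals $f$ as a distribution. If $p\geq 1$, then $\sigma_p=0$, the hypothesis simplifies to $s>0$, and Hölder's inequality gives $\mathcal{M}^u_p(\R)\hookrightarrow L_p^\loc(\R)\hookrightarrow L_1^\loc(\R)$. A standard weighted summation in $k$ (Hölder when $q\geq 1$, sub-additivity when $0<q\leq 1$) exploiting $s>0$ then bounds
\begin{equation*}
\sum_{k\geq 0}\|f_k\sep L_1(K)\|\;\lesssim_K\; \|f\sep\mathcal{N}^s_{u,p,q}(\R)\|,
\end{equation*}
as desired. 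To reduce the case $0<p<1$, where $\tfrac{p}{u}\sigma_p=\tfrac{d(1-p)}{u}$, to the previous one, I would invoke the Jawerth-Franke-type Sobolev embedding for Besov-Morrey spaces,
\begin{equation*}
\mathcal{N}^{s}_{u,p,q}(\R)\;\hookrightarrow\;\mathcal{N}^{s-d(1-p)/u}_{u/p,\,1,\,q}(\R),
\end{equation*}
whose hypotheses $u/p\geq u$, $p/u = 1/(u/p)$, and $s-d/u = (s-d(1-p)/u)-d/(u/p)$ are all satisfied. Because $s>\tfrac{d(1-p)}{u}$ makes the new smoothness parameter strictly positive while the new base integrability parameter equals $1$, the previously proved case applies to the target space and yields the claimed inclusion. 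A fully self-contained alternative would establish the Plancherel-Polya-type inequality $\|f_k\sep\mathcal{M}^{u/p}_1(\R)\|\lesssim 2^{kd(1-p)/u}\|f_k\sep\mathcal{M}^u_p(\R)\|$ for band-limited $f_k$ directly and then use it in the same weighted summation.

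For the non-embedding at $s<\tfrac{p}{u}\sigma_p$ (which forces $p<1$) my plan is to exhibit an explicit element of $\mathcal{N}^s_{u,p,q}(\R)\setminus L_1^\loc(\R)$ by means of the atomic (or wavelet) characterization of Besov-Morrey spaces. One chooses an atomic series $f=\sum_{j\geq 0}\sum_{m}\lambda_{j,m}\,a_{j,m}$ in which each $a_{j,m}$ is a standard smooth atom supported in a ball of radius $\sim 2^{-j}$ around $2^{-j}m$, and the coefficients $(\lambda_{j,m})$ are supported on a carefully chosen tiled subset of $\zz^d$ at each scale~$j$. The amplitudes and the cardinalities of the support sets are tuned so that the Morrey-sequence quasi-norm of $(\lambda_{j,m})$ is finite (hence $f\in\mathcal{N}^s_{u,p,q}(\R)$ through the atomic characterization), while the $L_1$-mass $\sum_{j,m}|\lambda_{j,m}|\int_{B(x_0,1)}|a_{j,m}|$ diverges precisely because $s<\tfrac{p}{u}\sigma_p$. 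The hard part is the simultaneous calibration of amplitudes and spatial cardinalities; this must exploit the additional slack in Morrey quasi-norms compared to Lebesgue quasi-norms, which is exactly the slack measured by the factor $\tfrac{p}{u}$ appearing in the threshold.
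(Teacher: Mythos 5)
The paper itself does not prove this lemma; it is quoted from Haroske, Moura and Skrzypczak (reference [HaMoSk]), so there is no in-paper argument to compare against. Judged on its own terms, your outline of the positive direction ($s>\frac{p}{u}\sigma_p$) is a sound reduction strategy: for $p\geq1$ the weighted summation over $k$ of $\|f_k\sep L_1(K)\|\lesssim_K\|f_k\sep\mathcal{M}^u_p(\R)\|$ works exactly as you describe, and lifting to $0<p<1$ via the Sobolev-type embedding $\mathcal{N}^{s}_{u,p,q}(\R)\hookrightarrow\mathcal{N}^{s-d(1-p)/u}_{u/p,\,1,\,q}(\R)$ (equivalently, a Plancherel--Polya--Nikol'skij inequality in Morrey quasi-norms with the ratio $p/u$ held fixed) is the right mechanism and is precisely where the factor $p/u$ enters. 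That embedding is a genuinely Morrey-specific fact and would need to be cited or verified in a complete write-up; you assert it but do not establish it.

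The negative direction has two concrete gaps. First, the parenthetical claim that $s<\frac{p}{u}\sigma_p$ "forces $p<1$" is simply false: for $p\geq 1$ one has $\sigma_p=0$, so the condition reads $s<0$, a perfectly admissible regime that still requires a counterexample; your atomic construction is tailored to exploit Morrey slack for $p<1$ and does not address it. Second, and decisively, for $p<1$ you explicitly defer "the simultaneous calibration of amplitudes and spatial cardinalities" -- but this calibration is the entire substance of the argument, since it is precisely what singles out $\frac{p}{u}\sigma_p=\frac{d(1-p)}{u}$ (rather than $\sigma_p$ itself or some other quantity) as the critical exponent. Until the construction is carried out, no non-embedding has been established. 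As it stands the proposal is a plan of attack for the negative half, not a proof.
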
 

For the Besov-Morrey spaces the following elementary embedding holds. In this context let us also refer to \cite[Theorem 3.4]{HaMoSk2}, where a similar result can be found.

\begin{lemma}\label{l_el_em_EN}
Let $ s \in \mathbb{R}   $, $ 0 < p \leq u < \infty$, and $ 0 < q \leq \infty $. Then $s>\sigma_p$ implies  $\mathcal{N}^{s}_{u,p,q}(\R) \hookrightarrow  \mathcal{M}^{u}_{p}(\R)$. 
\end{lemma}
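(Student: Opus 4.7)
The plan is to use the Littlewood-Paley decomposition $f = \sum_{k=0}^\infty f_k$ with $f_k := \cfi[\varphi_k \cf f]$ and exploit the $\rho$-subadditivity of the Morrey quasi-norm, where $\rho := \min\{1,p\}$.

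First, I would note that since $p \leq u$, the hypothesis $s > \sigma_p$ automatically implies $s > \frac{p}{u}\sigma_p$. Hence \autoref{lem_si_nlc} guarantees that every $f\in\mathcal{N}^s_{u,p,q}(\R)$ is a regular distribution lying in $L_1^\loc(\R)$, so the decomposition $f=\sum_k f_k$ may be interpreted as an equality of (locally integrable) functions, and each $f_k \in \cm^u_p(\R)$ directly from the definition of the Besov-Morrey quasi-norm.

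Second, I would transfer the $\rho$-subadditivity of the $L_p$-quasi-norm to the Morrey setting. For any fixed ball $B=B(y,r)$, the map $g\mapsto |B|^{1/u-1/p}\norm{g\sep L_p(B)}$ is $\rho$-subadditive; taking the supremum over all balls then yields
\begin{align*}
\norm{f\sep\cm^u_p(\R)}^{\rho} \;\leq\; \sum_{k=0}^\infty \norm{f_k\sep\cm^u_p(\R)}^{\rho}.
\end{align*}

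Third, I would convert the right-hand side into the Besov-Morrey quasi-norm by writing $\norm{f_k}^{\rho} = 2^{-ks\rho}\cdot(2^{ks}\norm{f_k})^{\rho}$ and splitting into cases according to the size of $q$. When $\rho \leq q <\infty$, Hölder's inequality with exponents $q/\rho$ and $(q/\rho)'$ produces the geometric series $\sum_k 2^{-ks\rho(q/\rho)'}$, which converges because $s>\sigma_p\geq 0$, and leaves the factor $\norm{f\sep\cn^s_{u,p,q}(\R)}^{\rho}$. For $q<\rho$ or $q=\infty$ I would instead exploit the monotonicity embedding $\cn^s_{u,p,q}\hookrightarrow\cn^s_{u,p,\infty}$ and estimate $\norm{f_k\sep\cm^u_p}\leq 2^{-ks}\norm{f\sep\cn^s_{u,p,\infty}}$ directly, again using $s>0$ to sum. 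In all cases this gives the desired bound $\norm{f\sep\cm^u_p(\R)} \ls \norm{f\sep\cn^s_{u,p,q}(\R)}$.

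The main obstacle is not an analytical one but rather a bookkeeping issue: one has to check carefully that the $\rho$-subadditivity genuinely carries through the supremum defining $\cm^u_p$, and that the partial sums $\sum_{k=0}^K f_k$ form a Cauchy sequence in $\cm^u_p(\R)$ whose limit coincides with the distributional limit $f$ (this is where the regularity from \autoref{lem_si_nlc} is crucial, since otherwise for $p<1$ the distributional $f$ could differ from any $\cm^u_p$-limit by a non-regular object). The case distinction between $p\geq 1$ (where $\rho=1$) and $p<1$ (where $\rho=p$) in combination with small vs.\ large $q$ will produce several sub-cases, but each is routine once the right Hölder exponent has been identified.
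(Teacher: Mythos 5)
Your argument is correct and proceeds by a genuinely different route from the paper. The paper proves the embedding indirectly: picking $\widetilde s$ with $s>\widetilde s>\sigma_p$, it invokes the monotonicity embedding $\mathcal{N}^{s}_{u,p,q}\hookrightarrow\mathcal{N}^{\widetilde s}_{u,p,\min\{p,q\}}$ from \cite{SawTan}, then passes to Triebel--Lizorkin--Morrey spaces via $\mathcal{N}^{\widetilde s}_{u,p,\min\{p,q\}}\hookrightarrow\mathcal{E}^{\widetilde s}_{u,p,q}$ (\cite[Lemma~8(iii)]{H21}), and finally applies \cite[Lemma~5(iv)]{HoWe23} to conclude $\mathcal{E}^{\widetilde s}_{u,p,q}\hookrightarrow\mathcal{M}^u_p$. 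You instead give a direct Littlewood--Paley argument: decompose $f=\sum_k f_k$, use $\rho$-subadditivity of the Morrey quasi-norm with $\rho=\min\{1,p\}$, and close the estimate via H\"older (for $q\geq\rho$) or the trivial step to $q=\infty$ (in fact the $q=\infty$ argument you sketch already handles all $q$ by monotonicity of $\mathcal{N}^s_{u,p,q}$ in $q$, so the H\"older case is superfluous). Your approach is more self-contained and makes the role of $s>0$ transparent, at the cost of the convergence bookkeeping you flag: one needs the partial sums $S_K=\sum_{k\leq K}f_k$ to converge to $f$ pointwise a.e.\ (which follows from $f\in L_1^\loc$ via \autoref{lem_si_nlc}) so that a Fatou-type argument identifies $\norm{f\sep L_p(B)}^\rho\leq\liminf_K\norm{S_K\sep L_p(B)}^\rho\leq\sum_k\norm{f_k\sep L_p(B)}^\rho$ for every ball $B$, from which the claimed $\rho$-subadditivity of $\norm{\cdot\sep\mathcal{M}^u_p}$ applied to the infinite sum follows. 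The paper's route is shorter on the page because it offloads that work to already-established embeddings in the $\mathcal{E}$-scale; yours is longer but elementary and does not require the detour through Triebel--Lizorkin--Morrey spaces.
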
 

\begin{proof}
    Choose $\widetilde{s}\in\re$ with $s>\widetilde{s}> \sigma_p$. 
    Then \cite[Proposition 3.6]{SawTan} and \cite[Lemma 8(iii)]{H21} yield
    \begin{align}\label{eq:embeddingNE}
        \mathcal{N}^{s}_{u,p,q}(\R)
        \hookrightarrow \mathcal{N}^{\widetilde{s}}_{u,p,\min\{p,q\}}(\R)
        \hookrightarrow \mathcal{E}^{\widetilde{s}}_{u,p,q}(\R)
    \end{align}
    with $\mathcal{E}^{\widetilde{s}}_{u,p,q}(\R)$ being a Triebel-Lizorkin-Morrey space; see for example \cite[Chapter 1.3.3]{ysy} for a definition. Hence, the assertion follows from \eqref{eq:embeddingNE} and \cite[Lemma 5(iv)]{HoWe23}.
\end{proof}

\subsection{Besov-Type Spaces \texorpdfstring{$B^{s,\tau}_{p,q}(\mathbb{R}^d)$}{Bstaupq(Rd)}}

Hereinafter we recall the definition of the Besov-type spaces $ B^{s, \tau}_{p,q}(\mathbb{R}^{d})  $. For that purpose we have to work with dyadic cubes. Let $ \mathcal{Q} $ be the collection of all dyadic cubes in $ \mathbb{R}^{d} $,
\begin{align*} 
\mathcal{Q} := \left\{  Q_{j,k}\subset\R \;: Q_{j,k} = 2^{-j}([0,1)^{d}+k) \ \mbox{with} \ j \in \mathbb{Z} \ \mbox{and} \ k \in \mathbb{Z}^{d} \right\}.
\end{align*} 
Further, let $\ell( P ) $ denote the side-length of a cube $ P \in \mathcal{Q} $ and write $ j_{P} := - \log_{2}(\ell(P))$.
\begin{defi}[Besov-Type Spaces $B^{s,\tau}_{p,q}(\R)$]\label{def_BTS}
Let $ (\varphi_{k})_{k\in \N_0 }$ be a smooth dyadic decomposition of unity in $\R$ with $d\in\N$. 
Then, for $ s \in \mathbb{R} $, $ 0 \le \tau <\infty$, and  $0 < p,q \le \infty$, the Besov-type space $  B^{s,\tau}_{p,q}(\R) $ is the set of all distributions $ f \in \mathcal{S}'(\mathbb{R}^{d})$ such that
\begin{align*}
    \norm{ f \sep B^{s,\tau}_{p,q}(\R) } 
    :=  \sup_{P \in \mathcal{Q} } \frac{1}{\abs{P}^{\tau}} \bigg ( \sum_{k = \max\{j_{P} , 0\}}^{\infty} 2^{ksq}  \Big ( \int_{P} \abs{  \cfi[\varphi_{k}\, \cf f](x) }^{p} \d x \Big )^{\frac{q}{p}} \bigg )^{\frac{1}{q}} < \infty .
\end{align*}
In the cases $ p = \infty $ and/or $ q = \infty $ the usual modifications have to be made.
\end{defi}

Below we collect some basic properties of the spaces $ B^{s,\tau}_{p,q}(\R)  $. 
Most of them are already well-known and will be important for us later. For that purpose, we let $ s \in \mathbb{R} $, $ \tau \geq 0 $, $ 0 < p < \infty $, and $ 0 < q \leq \infty$. 
Then $ B^{s,\tau}_{p,q}(\R) $ is independent of the chosen smooth dyadic decomposition of unity in the sense of equivalent quasi-norms; see \cite[Corollary 2.1]{ysy}. Moreover, the spaces $ B^{s,\tau}_{p,q}(\R)  $ are quasi-Banach spaces; cf.\ \cite[Lemma~2.1]{ysy}.  Furthermore, we observe $\mathcal{S}(\mathbb{R}^{d}) \hookrightarrow  B^{s,\tau}_{p,q}(\R)  \hookrightarrow   \mathcal{S}'(\mathbb{R}^{d})$, see \cite[Proposition~2.3]{ysy}. If $\tau = 0$, we recover the original Besov spaces: $B^{s,0}_{p,q}(\R) = B^{s}_{p,q}(\R)$. Also for $ \tau > \frac{1}{p}$ the spaces~$B^{s,\tau}_{p,q}(\R)$ are well-known. 
In that case the Besov-type spaces coincide with classical H\"older-Zygmund spaces, namely by \cite[Theorem~2(ii)]{yy2013} we have $B^{s,\tau}_{p,q}(\R) = B^{s + d( \tau - \frac{1}{p})}_{\infty,\infty}(\R)$ in the sense of equivalent quasi-norms. The borderline case $\tau = \frac{1}{p}$ has been investigated in \cite[Proposition 3]{yy2013}. Because of these observations in what follows we will concentrate on  the range $ 0 \leq \tau <  \frac{1}{p}  $. Here, for $\tau \neq 0$, in most cases the spaces $  B^{s,\tau}_{p,q}(\R)  $ do not coincide with Besov-Morrey spaces, see \cite[Corollary 3.3(ii)]{ysy} and \cite[Theorem~1.1]{SawYaYu} for some more details. However, for $ \tau <  \frac{1}{p}$ there exists an alternative simplified definition; see \cite[Proposition~3.1]{Si1}. Later on, it will be interesting to know under which restrictions on the parameters a space~$ B^{s,\tau}_{p,q}(\R) $ does or does not contain 
singular distributions. In this respect, the following result can be found in the literature.

\begin{lemma}[{\cite[Theorem~3.6]{HaMoSk}}]\label{l_BT_bp2}
 Let $ s \in \mathbb{R}  $, $ 0 < p <\infty $,  $ 0 \leq \tau < \frac{1}{p} $ and $ 0 < q \leq \infty$. Then the following assertions are true.

\begin{itemize}
\item[(i)] If either $ s > 0 $ and $ p \geq 1 $, or $ s > d  ( \frac{1}{p}  - 1  ) - d \tau (1 - p)$ and $ p < 1 $, then we have $  B^{s,\tau}_{p,q}(\R) \subset L_{1}^{\loc}(\R)  $.

\item[(ii)] If either $ s < 0 $ and $ p \geq 1 $, or $ s < d  ( \frac{1}{p}  - 1  ) - d \tau (1 - p)$ and $ p < 1 $, then we find $  B^{s,\tau}_{p,q}(\R) \not \subset L_{1}^{\loc}(\R)  $.
\end{itemize}
\end{lemma}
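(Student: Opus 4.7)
Both parts hinge on the Littlewood-Paley decomposition $f = \sum_{k \in \N_0} f_k$ with $f_k := \cfi[\varphi_k\, \cf f]$. The key preliminary observation is that, for every dyadic cube $P \in \mathcal{Q}$ of scale $j_P \leq k$, the definition of the Besov-type quasi-norm directly yields
\begin{align*}
\norm{f_k \sep L_p(P)} \leq C\,\abs{P}^{\tau}\,2^{-ks}\,\norm{f \sep B^{s,\tau}_{p,q}(\R)}
\end{align*}
for every $k \in \N_0$. This estimate, applied either on a fixed unit cube or on small cubes matching the spectral scale $2^{-k}$, is the engine of the sufficient part.

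For (i) with $p \geq 1$, I would fix a unit dyadic cube $P_0$ (so $\abs{P_0}=1$ and $j_{P_0}=0$) and combine the above bound with H\"older's inequality on $P_0$ to get $\norm{f_k \sep L_1(P_0)} \leq \norm{f_k \sep L_p(P_0)} \lesssim 2^{-ks}\,\norm{f \sep B^{s,\tau}_{p,q}(\R)}$; for $s>0$ the geometric sum in $k$ converges, and translation invariance yields $f \in L_1^\loc(\R)$. For $p<1$ the embedding $L_p(P_0) \hookrightarrow L_1(P_0)$ is lost, and one must use that each $f_k$ is band-limited to $B(0, 3\cdot 2^{k-1})$. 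A local Plancherel-Polya / Nikol'skij inequality of the form $\norm{f_k \sep L_1(P)} \lesssim 2^{kd(1/p-1)}\abs{P}^{1-1/p}\,\norm{f_k \sep L_p(P^{*})}$ on subcubes $P$ of $P_0$ at the spectral scale $2^{-k}$, together with the refined Besov-type bound $\norm{f_k \sep L_p(P)} \lesssim 2^{-kd\tau-ks}\,\norm{f \sep B^{s,\tau}_{p,q}(\R)}$ on these small cubes and summation over the $2^{kd}$ cubes tiling $P_0$, reduces the matter to balancing the four competing factors $2^{kd}$, $2^{kd(1/p-1)}$, $2^{-kd\tau}$ and $2^{-ks}$; geometric summability in $k$ holds precisely when $s > d(\tfrac{1}{p}-1) - d\tau(1-p)$. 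This subtle balancing, which crucially uses the flexibility in choosing cubes of all scales in the definition of $\norm{\cdot \sep B^{s,\tau}_{p,q}(\R)}$, is the main technical obstacle: the naive approach restricted to a single unit cube only produces the weaker threshold $\sigma_p$.

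For (ii) the strategy is to exhibit explicit singular distributions in $B^{s,\tau}_{p,q}(\R)$ that fail to be locally integrable once $s$ drops below the stated threshold. For $p \geq 1$ and $s<0$ the Dirac mass $\delta_0$ already suffices, since a direct Littlewood-Paley computation shows $\delta_0 \in B^{s,\tau}_{p,q}(\R)$ while obviously $\delta_0 \notin L_1^\loc(\R)$. For $p<1$ and $s < d(\tfrac{1}{p}-1) - d\tau(1-p)$ one constructs a lacunary wavelet-type series $f = \sum_j \lambda_j \psi_{j}$ of atoms concentrated at the origin at dyadic scales $2^{-j}$, tuning the coefficients $\lambda_j$ so that the supremum in the Besov-type quasi-norm is attained on a cube centered at the origin and the $\abs{P}^{-\tau}$-normalization saturates exactly at the predicted borderline; a matching elementary calculation then certifies both admissibility in $B^{s,\tau}_{p,q}(\R)$ and local non-integrability near the origin, which completes the converse and gives the sharpness announced in the lemma.
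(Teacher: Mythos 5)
The paper itself gives no proof of this lemma; it is quoted from \cite[Theorem~3.6]{HaMoSk}, so there is no in-paper argument to compare against. Evaluated on its own merits, your sketch has two concrete gaps. In part~(i) for $p<1$ the exponent balance does not close as you assert. Your displayed Plancherel--Polya inequality already double-counts a factor, since on a cube $P$ of side $2^{-k}$ one has $\abs{P}^{1-1/p}=2^{kd(1/p-1)}$; the correct local form is $\norm{f_k\sep L_1(P)}\lesssim\abs{P}^{1-1/p}\norm{f_k\sep L_p(P^*)}$. Combining this with the small-cube Besov-type bound $\norm{f_k\sep L_p(P^*)}\lesssim 2^{-kd\tau-ks}\norm{f\sep B^{s,\tau}_{p,q}(\R)}$ and summing over the $\sim 2^{kd}$ tiles of a unit cube $P_0$ gives
\[
\norm{f_k\sep L_1(P_0)}\lesssim 2^{k(d/p-d\tau-s)}\,\norm{f\sep B^{s,\tau}_{p,q}(\R)},
\]
which is summable in $k$ only when $s>d(\tfrac1p-\tau)$. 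Since $d(\tfrac1p-\tau)-\bigl[d(\tfrac1p-1)-d\tau(1-p)\bigr]=d(1-\tau p)>0$ for $\tau<\tfrac1p$, this falls strictly short of the claimed threshold $\sigma_p(1-\tau p)$. To obtain the sharp bound one must additionally use the \emph{unit-cube} constraint $\norm{f_k\sep L_p(P_0)}\lesssim 2^{-ks}\norm{f\sep B^{s,\tau}_{p,q}(\R)}$ and optimise: among block-constant profiles satisfying both the global $L_p(P_0)$ bound and the pointwise small-cube cap, the worst case concentrates $\abs{f_k}$ on as few tiles as possible, which yields $\norm{f_k\sep L_1(P_0)}\lesssim 2^{k[d(1/p-1)-d\tau(1-p)-s]}\norm{f\sep B^{s,\tau}_{p,q}(\R)}$ and hence the stated threshold. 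Your direct summation skips this interpolation step, so the ``balancing of four factors'' you describe does not actually produce the number you quote.

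Part~(ii) is also incomplete in the case $p\geq 1$. A direct computation shows that $\cfi[\varphi_k\widehat{\delta_0}]$ has $L_p$-norm $\sim 2^{kd(1-1/p)}$ on dyadic cubes containing the origin with $j_P\leq k$; plugging this into the Besov-type quasi-norm and letting $j_P\to\infty$ along cubes centred at the origin forces $s<d(\tfrac1p-1)-d\tau$ for $\delta_0\in B^{s,\tau}_{p,q}(\R)$. This is a proper subinterval of $s<0$ once $p>1$ or $\tau>0$, so $\delta_0$ alone cannot certify non-containment in the range $d(\tfrac1p-1)-d\tau\leq s<0$; a different, less concentrated singular distribution is needed there. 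The lacunary construction you indicate for $p<1$ is the right kind of idea, but it is only a sketch and would have to be carried out with care; as written neither half of~(ii) is actually complete.
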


Moreover, there is the following simple embedding. 

\begin{lemma}\label{l_BF_bp1}
    Let $ s \in \mathbb{R} $, $ 0 < p < \infty $, $ 0 < q \leq \infty   $, $ 0 \leq \tau < \frac{1}{p} $ and define $u:=1/(\frac{1}{p}-\tau)$. 
    Then $s>\sigma_p$ implies $B^{s,\tau}_{p,q}(\R) \hookrightarrow \mathcal{M}^u_p(\R)$.
\end{lemma}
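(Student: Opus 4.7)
The plan is to reduce the claim to the previously established Besov-Morrey embedding \autoref{l_el_em_EN} by passing through the (known) coincidence of Besov-type and Besov-Morrey spaces in the special case $q=\infty$. Note first that the hypothesis $0\leq\tau<\frac{1}{p}$ together with $u:=1/(\frac{1}{p}-\tau)$ gives $0<p\leq u<\infty$, so that the target space $\mathcal{M}^{u}_{p}(\R)$ is well defined and satisfies the standing parameter restrictions of \autoref{def_mor}.

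First, I would use the trivial monotonicity in the fine index, namely $B^{s,\tau}_{p,q}(\R)\hookrightarrow B^{s,\tau}_{p,\infty}(\R)$, which is immediate from \autoref{def_BTS} upon replacing the $\ell_q$-sum over $k$ by the supremum. Second, I would invoke the well-known coincidence
\begin{equation*}
    B^{s,\tau}_{p,\infty}(\R)=\mathcal{N}^{s}_{u,p,\infty}(\R)
    \quad\text{with}\quad
    u=\frac{1}{\frac{1}{p}-\tau}=\frac{p}{1-p\tau},
\end{equation*}
which is exactly the content of \cite[Corollary 3.3(ii)]{ysy} (and also appears in the parameter-comparison results of \cite{SawYaYu}) and is valid precisely in the regime $\tau<\frac{1}{p}$ assumed here.

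Third, since $s>\sigma_p$ by hypothesis, I would apply \autoref{l_el_em_EN} with the fine index chosen as $\infty$ in order to obtain $\mathcal{N}^{s}_{u,p,\infty}(\R)\hookrightarrow\mathcal{M}^{u}_{p}(\R)$. Concatenating the three embeddings then yields the desired continuous inclusion $B^{s,\tau}_{p,q}(\R)\hookrightarrow\mathcal{M}^{u}_{p}(\R)$.

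The proof is essentially mechanical once the coincidence in step two is quoted, so the main (and only nontrivial) obstacle is to verify that the parameter identification $u=1/(\frac{1}{p}-\tau)$ assumed in the statement matches the $u=p/(1-p\tau)$ appearing in the cited coincidence theorem, and that all standing assumptions on $(s,p,q,\tau)$ used in \cite[Corollary 3.3(ii)]{ysy} are met within our range. Both checks are immediate, so no further technicalities are expected.
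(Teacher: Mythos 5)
Your proof is correct but takes a genuinely different route from the paper's. Where the paper first drops the smoothness index (choosing $\sigma_p<\widetilde{s}<s$), descends into the Triebel-Lizorkin-type scale via $B^{s,\tau}_{p,q}(\R)\hookrightarrow B^{\widetilde{s},\tau}_{p,\min\{p,q\}}(\R)\hookrightarrow F^{\widetilde{s},\tau}_{p,q}(\R)$, uses the identification $F^{\widetilde{s},\tau}_{p,q}(\R)=\mathcal{E}^{\widetilde{s}}_{u,p,q}(\R)$, and then embeds Triebel-Lizorkin-Morrey into Morrey, you instead pass to the coarsest fine index ($q=\infty$), invoke the coincidence $B^{s,\tau}_{p,\infty}(\R)=\mathcal{N}^{s}_{u,p,\infty}(\R)$ with the same parameter relation $u=1/(\tfrac1p-\tau)$, and close the argument by appealing to the already established Besov-Morrey embedding \autoref{l_el_em_EN}. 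Both routes are sound; yours is somewhat shorter and stays entirely within the Besov(-type/-Morrey) scales, and it has the aesthetic advantage of literally mirroring \autoref{l_el_em_EN}. The paper's route, on the other hand, is deliberately parallel to the proof of \autoref{l_el_em_EN} itself (both use $F^{\widetilde{s},\tau}_{p,q}=\mathcal{E}^{\widetilde{s}}_{u,p,q}$ and then \cite[Lemma 5(iv)]{HoWe23}), keeping the two lemmas structurally uniform. One small presentational note: the paper itself cites \cite[Corollary 3.3(ii)]{ysy} and \cite[Theorem~1.1]{SawYaYu} primarily as evidence that $B^{s,\tau}_{p,q}$ and $\mathcal{N}^{s}_{u,p,q}$ generically do \emph{not} coincide when $q<\infty$; you are relying on the complementary part of that same statement (equality precisely in the case $q=\infty$), which is indeed contained there, but it is worth flagging in your write-up that you are using the ``only when $q=\infty$'' clause so a reader does not misread the reference as contradicting your claim.
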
 

\begin{proof}
For the proof we may choose $\widetilde{s}\in\re$ such that $s>\widetilde{s}> \sigma_p$. 
    Then \cite[Proposition~2.1]{ysy} together with \cite[Proposition 2.1(iii)]{ysy} and \cite[Lemma 13]{H21} yield
    \begin{align*}
        B^{s,\tau}_{p,q}(\R)
        \hookrightarrow B^{\widetilde{s},\tau}_{p,\min\{p,q\}}(\R)
        \hookrightarrow F^{\widetilde{s},\tau}_{p,q}(\R)
        = \mathcal{E}^{\widetilde{s}}_{u,p,q}(\R)
    \end{align*}
    with $F^{\widetilde{s},\tau}_{p,q}(\R)$ and $\mathcal{E}^{\widetilde{s}}_{u,p,q}(\R)$ being Triebel-Lizorkin-type and Triebel-Lizorkin-Morrey spaces, respectively; see, e.g., \cite[Chapter 1.3.3]{ysy} for a definition. Now the assertion follows from \cite[Lemma 5(iv)]{HoWe23}.
\end{proof}

\subsection{Function Spaces on Domains}
It is possible to define Morrey, Besov-Morrey, and Besov-type spaces on domains as well. For that purpose we employ the following general definition. 

\begin{defi}[Function Spaces on Domains]\label{defi:space_domain}
For $d\in\N$ let $X ( \mathbb{R}^{d} )$ be a quasi-normed space of tempered distributions such that $X( \mathbb{R}^{d} ) \hookrightarrow \cs'( \mathbb{R}^{d} )$.
Further, let $\Omega $ denote an open, nontrivial subset of $ \mathbb{R}^{d} $.
Then  $X(\Omega)$ is defined as the collection of all $f \in \cd' (\Omega)$ such that
there exists a distribution $g \in X( \mathbb{R}^{d} )$ satisfying
\begin{align*}
f (\varphi) = g (E\varphi) \qquad \mbox{for all} \qquad \varphi \in \cd (\Omega) \, .
\end{align*}
Here $E\varphi \in \mathcal{S}(\R)$ denotes the extension of $\varphi \in \cd (\Omega)$ by zero on $ \mathbb{R}^{d} \setminus \Omega$.
We put
\begin{align*}
    \norm{ f \sep X(\Omega) } := \inf \Big\{\norm{ g \sep X( \mathbb{R}^{d} ) }:\, g\in X(\R) \text{ with } g|_\Omega =f  \Big\} \, .
\end{align*}
\end{defi}
For Morrey spaces on domains we can use the following (alternative but equivalent) direct approach; cf.\ \cite[Lemma~3]{HoWe23}.
Therein, $M(\Omega)$ denotes the set of all measurable complex-valued functions on $\Omega\subseteq\R$. 
\begin{defi}[Morrey Space $\mathcal{M}^{u}_{p}(\Omega)$]
\label{def:morrey_mod}
    For $d\in\N$ let $\Omega\subseteq\R$ be a domain.
    Then for $0<p \leq u < \infty$ the Morrey space $\mathcal{M}^{u}_{p}(\Omega)$ is given by
    $$
        \mathcal{M}^{u}_{p}(\Omega) := \Big\{ f\in M(\Omega) \,:\, \norm{f \sep \mathcal{M}^{u}_{p}(\Omega)}<\infty \Big\}
    $$
    endowed with the (quasi-)norm
    $$
        \norm{f \sep \mathcal{M}^{u}_{p}(\Omega)} := \sup_{y\in\Omega, r>0} r^{d(\frac{1}{u}- \frac{1}{p})} \left( \int_{\Omega\cap B(y,r)} \abs{f(x)}^p \d x \right)^{\frac{1}{p}}.
    $$
\end{defi}

Let us remark that for $\Omega = \mathbb{R}^d$ this definition is equivalent (modulo some multiplicative constant) to \autoref{def_mor} above. 
On the other hand, for $\Omega \subsetneq \mathbb{R}^d$ we note that the weight does not change for balls close to~$\partial\Omega$ in order to avoid restrictions on~$\Omega$ (such as the so-called measure density condition). Further explanations concerning the usability of \autoref{def:morrey_mod} (in particular in the context of Lipschitz domains) can be found in \cite[Remark 2.2(iii)]{HarSchSkr18}. 
Finally, note that also in \autoref{def:morrey_mod} it is possible to replace balls by dyadic cubes; see also \cite[Chapter 1.2.1 and Remark 1]{DifazHaSaVolI} for the case $\Omega = \mathbb{R}^d$.

\begin{lemma}\label{lem:discrete_morrey}
    Let $\Omega\subseteq\R$ be a domain or $\R$ itself. For $0<p<\infty$ and $0\leq \tau < \frac{1}{p}$ let $u:=1/(\frac{1}{p}-\tau)$. Then
    $$
        \sup_{P\in\mathcal{Q}} \frac{1}{\abs{P}^\tau} \norm{f\sep L_p(P\cap \Omega)} 
        \sim \norm{f \sep \mathcal{M}^u_p(\Omega)}, \qquad f\in M(\Omega). 
    $$
\end{lemma}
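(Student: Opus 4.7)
The plan is to observe that $u=1/(\frac{1}{p}-\tau)$ gives $\tau=\frac{1}{p}-\frac{1}{u}$, so that for every dyadic cube $P\in\mathcal{Q}$ the factor $\abs{P}^{-\tau}=\ell(P)^{d(\frac{1}{u}-\frac{1}{p})}$ plays exactly the same role as the weight $r^{d(\frac{1}{u}-\frac{1}{p})}$ in \autoref{def:morrey_mod}. In particular, both quantities are nonincreasing in the underlying scale because $\frac{1}{u}-\frac{1}{p}=-\tau\le 0$. Hence the assertion is the standard ball-vs.-cube comparison, already announced for $\Omega=\R$ in the remark following \autoref{def:morrey_mod}; I would reprove it directly so that the geometry of $\Omega$ plays no role at all.

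For the bound $\sup_{P\in\mathcal{Q}}\abs{P}^{-\tau}\norm{f\sep L_p(P\cap\Omega)}\lesssim\norm{f\sep\mathcal{M}^u_p(\Omega)}$, I would fix any dyadic cube $P$ with $P\cap\Omega\neq\emptyset$, pick a point $y_P\in P\cap\Omega$, and use the inclusion $P\subset B(y_P,r_P)$ for $r_P:=\sqrt{d}\,\ell(P)$. Since $\ell(P)$ and $r_P$ agree up to a $d$-dependent multiplicative constant, so do $\ell(P)^{d(\frac{1}{u}-\frac{1}{p})}$ and $r_P^{d(\frac{1}{u}-\frac{1}{p})}$. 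Plugging this into the integral over $P\cap\Omega\subset B(y_P,r_P)\cap\Omega$ and taking the supremum over $P$ produces the desired bound by $\norm{f\sep\mathcal{M}^u_p(\Omega)}$.

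For the converse, I would fix $y\in\Omega$ and $r>0$, choose $j\in\zz$ with $r<2^{-j}\le 2r$, and cover $B(y,r)$ by at most $M\le 2^d$ dyadic cubes $P_1,\dots,P_M$ of common sidelength $2^{-j}$. This is possible because $B(y,r)$ lies inside the coordinate cube of sidelength $2r\le 2^{-j}$ around $y$, which meets at most $2^d$ dyadic cubes of that scale. Since $r\le\ell(P_k)\le 2r$ and the exponent $d(\frac{1}{u}-\frac{1}{p})=-d\tau$ is nonpositive, one has $r^{d(\frac{1}{u}-\frac{1}{p})}\le 2^{d\tau}\,\abs{P_k}^{-\tau}$ for every $k$. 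A quasi-triangle inequality for $\ell_p$-(quasi-)sums applied to the finite cover (with constant at most $M^{1/p}\le 2^{d/p}$) then controls the ball term by $\sup_{P\in\mathcal{Q}}\abs{P}^{-\tau}\norm{f\sep L_p(P\cap\Omega)}$.

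The argument is purely geometric and contains no substantial obstacle; the only points requiring attention are the direction of monotonicity imposed by the nonpositive exponent $-d\tau$, and the uniform quasi-triangle constant in the range $p<1$, both of which only contribute factors depending on $d$, $p$, and $\tau$. Crucially, since only the intersections $P\cap\Omega$ and $B(y,r)\cap\Omega$ enter, the equivalence holds for \emph{any} open nontrivial $\Omega\subseteq\R$ (or $\R$ itself) without any assumption on $\partial\Omega$.
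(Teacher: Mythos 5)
Your argument follows essentially the same route as the paper's own proof: one direction by enclosing each dyadic cube $P$ in a ball $B(y_P,\sqrt{d}\,\ell(P))$ of comparable volume, the other by covering each ball $B(y,r)$ with a bounded number of dyadic cubes of sidelength comparable to $r$ and invoking the $p$-triangle inequality. So the approach is sound and matches the paper.

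One small slip in the converse direction: having chosen $j\in\zz$ with $r<2^{-j}\le 2r$, you then assert that the coordinate cube of sidelength $2r$ around $y$ satisfies $2r\le 2^{-j}$; but your own choice forces $2r\ge 2^{-j}$, so this is reversed. Consequently the coordinate cube has sidelength in $[\,2^{-j},2^{-j+1}\,)$, and each of its sides meets at most \emph{three} dyadic intervals of length $2^{-j}$, giving a covering number of at most $3^d$ rather than $2^d$. This is harmless — any bound depending only on $d$ suffices, which is exactly how the paper handles it (it only records that $\#I$ is bounded independently of $y$ and $r$) — but the inequality as written is internally inconsistent and should be corrected. The remaining steps (the monotonicity coming from the nonpositive exponent $-d\tau$ and the quasi-triangle constant for $p<1$) are handled correctly.
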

\begin{proof}
    Note that the definition of $u$ ensures $p\leq u < \infty$ and $\abs{P}^{-\tau}=\abs{P}^{\frac{1}{u}-\frac{1}{p}}$. Given $P\in\mathcal{Q}$ with $P\cap\Omega\neq \emptyset$, we may choose $y\in P\cap \Omega$ and $r:=\ell(P) \sqrt{d}$ such that $P\subset B(y,r)$ while $\abs{P}\sim \abs{B(y,r)}\sim r^d$. This shows ``$\lesssim$''. Conversely, there exists $N\in\N$ such that every ball $B(y,r)$ centered in $y\in\Omega$ can be covered by at most $N$ dyadic cubes~$P_k$ with $\abs{P_k}\sim r^d$. To this end, given $y$ and $r$, choose $j\in\mathbb{Z}$ with $2^{j-1}<r\leq 2^{j}$ and let $I:=\big\{k\in\Z \,:\, \abs{2^j k - y} \leq 2\sqrt{d}\, 2^{j}\big\}$. 
    Then all $P_k:=Q_{-j,k}\in\mathcal{Q}$ with $k\in I$ satisfy $\ell(P_k)=2^j$ as well as $B(y,r) \subseteq \bigcup_{k\in I} P_k$, while $\# I$ is bounded independent of $y$ and~$r$.
    Hence,
    $$
        r^{d(\frac{1}{u}-\frac{1}{p})} \norm{f \sep L_p(B(y,r)\cap \Omega)} 
        \lesssim \sum_{k\in I} \abs{P_k}^{\frac{1}{u}-\frac{1}{p}} \norm{f \sep L_p(P_k \cap \Omega)}
        \lesssim \sup_{P\in\mathcal{Q}} \frac{1}{\abs{P}^\tau} \norm{f\sep L_p(P\cap \Omega)}
    $$
    which shows ``$\gtrsim$'' and completes the proof.
\end{proof}

In the sequel we shall need the following analogue of \cite[Lemma 5]{HoWe23}. 
Therein, for $\varepsilon>0$ the $\varepsilon$-neighbourhood of $S\subseteq\R$ is given by $S_\varepsilon:=\{x\in\R \,:\, \dist(x,S)<\varepsilon\}$.
\begin{lemma}\label{lem:tools}
    Let $ s \in \mathbb{R}$, $0 < p \leq u < \infty$, $0 < q \leq \infty$, $0\leq \tau < \frac{1}{p}$, and let $\Omega,\Omega_1,\Omega_2\subseteq\R$ be arbitrary domains.
    \begin{enumerate}
        \item For all $G\in \mathcal{D}(\R)$ there exist $c_G, c_G'>0$ such that
        \begin{align*}
            \norm{ G|_\Omega \cdot f \sep \mathcal{N}^{s}_{u,p,q}(\Omega)} &\leq c_G \norm{f \sep \mathcal{N}^{s}_{u,p,q}(\Omega)}, \qquad f\in \mathcal{N}^{s}_{u,p,q}(\Omega),\\
            \norm{ G|_\Omega \cdot f \sep B^{s,\tau}_{p,q}(\Omega)} &\leq c_G' \norm{f \sep B^{s,\tau}_{p,q}(\Omega)}, \;\qquad\; f\in B^{s,\tau}_{p,q}(\Omega).
        \end{align*}
        
        \item Every affine-linear diffeomorphism $\Phi\colon \R\to\R$ yields isomorphisms $T_\Phi\colon f\mapsto f\circ\Phi$ of $\mathcal{N}^{s}_{u,p,q}(\Phi(\Omega))$ onto $\mathcal{N}^{s}_{u,p,q}(\Omega)$ and of $B^{s,\tau}_{p,q}(\Phi(\Omega))$ onto $B^{s,\tau}_{p,q}(\Omega)$.

        \item Let $S\subseteq\R$ be such that $S_\varepsilon\cap \Omega_1= S_\varepsilon\cap \Omega_2$. 
        Then for all $F\in \mathcal{S}'(\R)$ with $\supp F \subseteq S$
        \begin{align*}
            \norm{F|_{\Omega_1} \sep \mathcal{N}^{s}_{u,p,q}(\Omega_1)} &\sim \norm{F|_{\Omega_2} \sep \mathcal{N}^{s}_{u,p,q}(\Omega_2)}, \\
            \norm{F|_{\Omega_1} \sep B^{s,\tau}_{p,q}(\Omega_1)} &\sim \norm{F|_{\Omega_2} \sep B^{s,\tau}_{p,q}(\Omega_2)}.
        \end{align*}
    \end{enumerate}
\end{lemma}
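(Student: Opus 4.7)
The plan is to reduce all three assertions to their known counterparts on the full space $\R$ and then transfer to domains via \autoref{defi:space_domain}. Throughout, let $X$ denote either $\mathcal{N}^{s}_{u,p,q}$ or $B^{s,\tau}_{p,q}$; both scales enjoy the building blocks needed below.

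For part (i) I would invoke the pointwise multiplier property on $\R$: for every $G\in \mathcal{D}(\R)$, multiplication by $G$ is bounded on $X(\R)$. For $\mathcal{N}^{s}_{u,p,q}(\R)$ this is classical, and for $B^{s,\tau}_{p,q}(\R)$ it is covered by results in \cite[Chapter~6]{ysy}. Given $f\in X(\Omega)$ and $\delta>0$, pick an extension $g\in X(\R)$ with $g|_\Omega=f$ and $\norm{g\sep X(\R)}\leq(1+\delta)\norm{f\sep X(\Omega)}$; then $Gg\in X(\R)$ satisfies $\norm{Gg\sep X(\R)}\leq c_G\norm{g\sep X(\R)}$ and $(Gg)|_\Omega=G|_\Omega\cdot f$, so passing to the infimum over $g$ yields the desired bound. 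Part (ii) is equally straightforward: every affine-linear diffeomorphism $\Phi$ of $\R$ induces an isomorphism of $X(\R)$ onto itself, which follows from the transformation behavior of a smooth dyadic decomposition of unity under linear maps (cf.\ \cite[Chapter~4]{ysy}). Applying this to extensions from $\Phi(\Omega)$ and $\Omega$ respectively, and invoking the same reasoning for $\Phi^{-1}$, shows that $T_\Phi$ is an isomorphism with equivalent quasi-norms.

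The main obstacle lies in part (iii). I would pick a smooth cutoff $\eta$ with $0\leq\eta\leq 1$, $\eta\equiv 1$ on a neighborhood of $S$ contained in $S_{\varepsilon/2}$, and $\supp\eta\subseteq S_\varepsilon$; for $S$ bounded one has $\eta\in\mathcal{D}(\R)$, otherwise one appeals to the analogue of (i) for smooth bounded multipliers, which is also available for both scales. Given an extension $g_1\in X(\R)$ of $F|_{\Omega_1}$, I would verify that $\eta g_1$ is an admissible extension of $F|_{\Omega_2}$: for every $\psi\in\mathcal{D}(\Omega_2)$, the function $\eta \cdot E\psi$ is supported in $\supp\eta\cap\supp\psi\subseteq S_\varepsilon\cap\Omega_2=S_\varepsilon\cap\Omega_1$ and hence may be regarded as a test function on $\Omega_1$, giving
\begin{align*}
    (\eta g_1)|_{\Omega_2}(\psi) = g_1(\eta E\psi) = (g_1|_{\Omega_1})(\eta E\psi) = (F|_{\Omega_1})(\eta E\psi) = F(\eta E\psi),
\end{align*}
while $F((1-\eta)E\psi)=0$ since $(1-\eta)E\psi$ is supported away from $S\supseteq\supp F$. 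Hence $(\eta g_1)|_{\Omega_2}=F|_{\Omega_2}$, so (i) together with the infimum over $g_1$ gives ``$\gtrsim$'' in the desired equivalence, and swapping the roles of $\Omega_1$ and $\Omega_2$ produces the reverse bound. The delicate point to check rigorously is exactly this distributional identification, which relies crucially on the assumption $S_\varepsilon\cap\Omega_1=S_\varepsilon\cap\Omega_2$ to interchange $\Omega_1$ and $\Omega_2$ after the cutoff has been applied.
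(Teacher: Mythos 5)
Your proof is correct and takes essentially the same approach as the paper, which simply refers the reader to the identical extension/infimum argument from \cite[Lemma 5(i)--(iii)]{HoWe23} (carried out there for Triebel-Lizorkin-Morrey spaces) combined with the whole-space multiplier and diffeomorphism theorems for $\mathcal{N}^{s}_{u,p,q}(\mathbb{R}^d)$ and $B^{s,\tau}_{p,q}(\mathbb{R}^d)$ that you invoke. The cutoff construction and distributional verification you spell out for part (iii) are exactly the steps the cited proof performs.
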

\begin{proof}
    Here we can argue exactly as in the proof of \cite[Lemma 5(i)--(iii)]{HoWe23} with $\mathcal{E}^{s}_{u,p,q}$ being replaced by $\mathcal{N}^{s}_{u,p,q}$ resp.\ $B^{s,\tau}_{p,q}$. For the needed assertions in spaces on $\R$, we refer to \cite[Theorem 2.6]{HaSkSM} and \cite[Theorem 6.1]{ysy} for (i) and to \cite[Theorem 1.7]{Saw10} and \cite[Theorem~6.7]{ysy} for (ii). 
\end{proof}

Nearly every domain that appears in this paper is a so-called Lipschitz domain. For the definition we follow Stein \cite[VI.3.2]{Stein}.

\begin{defi}[Lipschitz Domain]
\label{lipdo}
By a Lipschitz domain in $\R$, $d\in\N$, we mean either a special or a bounded Lipschitz
domain. 
\begin{itemize}
\item[(i)]
A special Lipschitz domain is an open set $\Omega \subset
\mathbb{R}^{d} $ lying above the graph of a Lipschitz function $\omega:\ \mathbb{R}^{d-1} \to \mathbb{R}$, namely $ \Omega:=\big\{(x',x_d)\in \mathbb{R}^{(d-1)+1}  :\ x_d > \omega (x')\big\} $, where $\omega$ satisfies that for all $x', y'\in \mathbb{R}^{d-1}$ there holds $\abs{\omega(x')-\omega(y')}\le c \abs{x'-y'}$ with some constant~$c>0$ independent of $x'$ and $y'$.

\item[(ii)]
A bounded  Lipschitz domain is a bounded domain $\Omega \subset \mathbb{R}^{d} $
whose boundary $\partial \Omega$ can be covered by a finite number of open balls $B_k$
such that for each $k \in \mathbb{N} $ (after a suitable rotation) $\partial\Omega\cap B_k$
is a part of the graph of a Lipschitz function.
\end{itemize}
For simplicity we shall use the convention that a bounded Lipschitz domain in $ \mathbb{R} $ is just a bounded interval.
\end{defi}

\section{Quasi-Norms of Interest}\label{sec_3_norms}
In this paper it is our main goal to describe Besov-Morrey and Besov-type spaces in terms of equivalent quasi-norms based on higher order differences and local oscillations. 
To do so, in what follows we introduce some additional notation. Let $d\in\N$. 
Given a function $ f \colon \mathbb{R}^d \rightarrow \mathbb{C}$ its first order difference of step length $h\in\R$ is given by the mapping $x\mapsto \Delta_{h}^{1}f (x) := f ( x + h ) - f (x)$ on $\R$. Moreover, for $N\in\N\setminus\{1\}$ we put
\begin{align}\label{eq_diff_def}
    \Delta_{h}^{N}f (x) := \left  (\Delta_{h} ^1 \left ( \Delta_{h} ^{N-1}f \right  )\right ) (x) \, , \qquad x \in \R\, . 
\end{align}
Then it is easily seen that for every $N\in\N$ and $h\in\R$ there holds
\begin{align}\label{eq:Delta}
    \Delta_{h}^{N}f (x) = \sum_{k=0}^N (-1)^{N-k} \binom{N}{k} \,f(x+kh), \qquad x\in\R.
\end{align}
In addition, for $N\in\N$ let $\mathcal{P}_{N-1}$ denote the set of all $d$-variate polynomials with total degree strictly less than $N$ and let $\Omega \subseteq \R$ be a domain or $\R$ itself.
Then for $0<v\leq\infty$ the local oscillation with radius $t>0$ of $f \in L_v^{\loc}(\Omega)$ in $x\in\Omega$ is given by
\begin{align*}
    \osc_{v,\Omega}^{N-1}f(x,t) 
    := \begin{cases}
        \displaystyle \inf_{\pi \in \mathcal{P}_{N-1}} \Big( t^{-d} \int_{B(x,t) \cap \Omega} \abs{f(y) - \pi (y)}^{v} \d y \Big)^{\frac{1}{v}} & \text{if}\quad v<\infty,\\
        \displaystyle\inf_{\pi \in\mathcal{P}_{N-1}} \esssup_{y\in B(x,t)\cap \Omega} \abs{f(y)-\pi (y)} & \text{if}\quad v=\infty.
    \end{cases}
\end{align*}
If $\Omega=\R$, we simply write $\osc_{v}^{N-1}f:=\osc_{v,\R}^{N-1}f$. Now we are well-prepared to introduce the quasi-(semi-)norms which will turn out to characterize Besov-Morrey spaces $\mathcal{N}^{s}_{u,p,q}(\Omega)$ and Besov-type spaces $B^{s,\tau}_{p,q}(\Omega)$, respectively. 
They naturally generalize analogous quantities for Besov spaces $B^s_{p,q}(\Omega)$; cf.~\cite[Section 5.2]{Tr92}.

\begin{defi}[New quasi-(semi-)norms on $\mathcal{N}^{s}_{u,p,q}$]
\label{defi:norms_N}
    For $d\in\N$ let $\Omega \subseteq \R$ be a domain or~$\R$ itself.
    Then for $N\in\N$, $0<p\leq u < \infty$, $0 < q,T,v \leq \infty$, $0<R<\infty$, and $f\in L_{\max\{1,p,v\}}^\loc(\Omega)$ we define
    \begin{align*}
        \abs{f}^{(T,v,N)}_{\osc,\Omega} 
        := \bigg( \int_{0}^{T} t^{-sq} \norm{ \osc_{v,\Omega}^{N-1} f(\cdot,t) \sep \mathcal{M}^{u}_{p}(\Omega)}^{q} \frac{\d t}{t} \bigg)^{\frac{1}{q}}
    \end{align*}
    (if $\Omega=\R$, we simply write $\abs{f}^{(T,v,N)}_{\osc}:=\abs{f}^{(T,v,N)}_{\osc,\R}$)
    as well as
    \begin{align*}
	   \norm{f \sep \mathcal{N}^{s}_{u,p,q}(\Omega)}^{(T,v,N)}_\osc 
	   &:= \norm{ f \sep \mathcal{M}^{u}_{p}(\Omega)} + \abs{f}^{(T,v,N)}_{\osc,\Omega},\\
	   \norm{f \sep \mathcal{N}^{s}_{u,p,q}(\Omega)}^{(R,T,v,N)}_\osc
	   &:= \norm{\Big( \int_{B(\,\cdot\,,R)\cap \Omega} \abs{f(y)}^{v} \d y \Big )^{\frac{1}{v}} \sep \mathcal{M}^{u}_{p}(\Omega)} + \abs{f}^{(T,v,N)}_{\osc,\Omega}.
    \end{align*}
    Likewise, using $V^{N}(x,t) := \{ h \in \mathbb{R}^d \,:\, \abs{h} < t \ \text{and} \ x + \ell h \in \Omega \ \text{for} \ 0 \leq \ell \leq N \}$, we let
    \begin{align*}
        \abs{f}^{(T,v,N)}_{\Delta,\Omega} 
        := \bigg( \int_{0}^{T} t^{-sq} \norm{ \Big( t^{-d} \int_{V^{N}(\,\cdot\,,t)} \abs{\Delta^{N}_{h}f(\cdot)}^v \d h \Big)^{\frac{1}{v}} \sep \mathcal{M}^{u}_{p}(\Omega) }^q \frac{\d t}{t} \bigg)^{\frac{1}{q}},
    \end{align*}    ($\abs{f}^{(T,v,N)}_{\Delta}:=\abs{f}^{(T,v,N)}_{\Delta,\R}$)
    and define $\norm{f \sep \mathcal{N}^{s}_{u,p,q}(\Omega)}^{(T,v,N)}_\Delta$ as well as $\norm{f \sep \mathcal{N}^{s}_{u,p,q}(\Omega)}^{(R,T,v,N)}_\Delta$ correspondingly.
    In any case, for $ q = \infty $ and/or $ v = \infty$, the usual modifications are made.
\end{defi}

\begin{defi}[New quasi-(semi-)norms on $B^{s,\tau}_{p,q}$]
\label{defi:norms_B}
    For $d\in\N$ let $\Omega \subseteq \R$ be a domain or~$\R$ itself.
    Then for $N\in\N$, $0<p<\infty$, $0 \leq \tau < \frac{1}{p}$, $0 < q,T,v \leq \infty$, $0<R<\infty$, and $f\in L_{\max\{1,p,v\}}^\loc(\Omega)$ we define
    \begin{align*}
        \abs{f}^{(T,v,N)}_{\osc,\tau,\Omega} 
        := \sup_{P\in\mathcal{Q}} \frac{1}{\abs{P}^\tau} \left(\int_{0}^T t^{-sq} \norm{ \osc_{v,\Omega}^{N-1} f(\cdot, t) \sep L_p(P\cap \Omega) }^{q} \frac{\d t}{t} \right)^{\frac{1}{q}}
    \end{align*}
    (if $\Omega=\R$, we simply write $\abs{f}^{(T,v,N)}_{\osc,\tau}:=\abs{f}^{(T,v,N)}_{\osc,\tau,\R}$),
    as well as
    \begin{align*}
	   \norm{f \sep B^{s,\tau}_{p,q}(\Omega)}^{(T,v,N)}_\osc 
	   &:= \sup_{P\in\mathcal{Q}} \frac{1}{\abs{P}^\tau} \norm{ f \sep L_p(P\cap \Omega)} + \abs{f}^{(T,v,N)}_{\osc,\tau,\Omega},\\
	   \norm{f \sep B^{s,\tau}_{p,q}(\Omega)}^{(R,T,v,N)}_\osc
	   &:= \sup_{P\in\mathcal{Q}} \frac{1}{\abs{P}^\tau} \norm{ \Big( \int_{B(\cdot,R)\cap\Omega} \abs{f(y)}^v\d y \Big)^{\frac{1}{v}} \sep L_p(P \cap \Omega )} + \abs{f}^{(T,v,N)}_{\osc,\tau,\Omega}.
    \end{align*}
    Likewise, let
    \begin{align*}
        \abs{f}^{(T,v,N)}_{\Delta,\tau,\Omega} 
        := \sup_{P\in\mathcal{Q}} \frac{1}{\abs{P}^\tau} \bigg( \int_{0}^{T} t^{-sq} \norm{ \Big( t^{-d} \int_{V^{N}(\,\cdot\,,t)} \abs{\Delta^{N}_{h}f(\cdot)}^v \d h \Big)^{\frac{1}{v}} \sep L_{p}(P\cap \Omega) }^q \frac{\d t}{t} \bigg)^{\frac{1}{q}},
    \end{align*}    ($\abs{f}^{(T,v,N)}_{\Delta,\tau}:=\abs{f}^{(T,v,N)}_{\Delta,\tau,\R}$)
    and define $\norm{f \sep B^{s,\tau}_{p,q}(\Omega)}^{(T,v,N)}_\Delta$ as well as $\norm{f \sep B^{s,\tau}_{p,q}(\Omega)}^{(R,T,v,N)}_\Delta$ correspondingly.
    In any case, for $ q = \infty $ and/or $ v = \infty$, the usual modifications are made.
\end{defi}

\section{Characterizations on \texorpdfstring{$\mathbb{R}^d$}{Rd}}\label{sec_char_Rd}

\subsection{Characterizations via Local Oscillations for \texorpdfstring{$\mathcal{N}^{s}_{u,p,q}(\mathbb{R}^d)$}{Nsupq(Rd)}}\label{sec_char_Rd_1}

In this first subsection our main goal is to describe the Besov-Morrey spaces $\mathcal{N}^{s}_{u,p,q}(\mathbb{R}^{d})$ in terms of local oscillations. 
In the literature there already exist some first characterizations of this type, e.g., based on the theory developed by Hedberg and Netrusov~\cite{HN}.
Similarly, we obtain the following result.

\begin{theorem}\label{thm_HN_N_osc_1}
    Let $d,N \in \mathbb{N} $, $ 0 < p \leq u < \infty$, and $ 0 < q,v \leq \infty $. 
    Further let $s\in\mathbb{R}$ with
    \begin{align*}
		d\, \max \left\{  0, \frac{1}{p} - 1, \frac{1}{p} - \frac{1}{v} \right\} < s < N.
    \end{align*}
    Then a function $ f\in L_{p}^{\loc}(\R)$ belongs to $ \mathcal{N}^{s}_{u,p,q}(\R)$ if and only if $ f \in L_{v}^{\loc}(\R)$ and
    \begin{align*}
        \norm{ f \sep \mathcal{N}^{s}_{u,p,q}(\R)}_{\osc}^{(\clubsuit)} &
        :=   \Bigg ( \norm{ \Big ( \int_{B(\,\cdot\,,1)}\vert f(y) \vert^{v} \d y \Big )^{\frac{1}{v}}   \sep  \mathcal{M}^{u}_{p}( \mathbb{R}^d) }^{q} \\
        &+ \sum_{j = 1}^{\infty} 2^{jq(s + \frac{d}{v})} \norm{ \inf_{\pi \in \mathcal{P}_{N-1}} \!\!\Big( \int_{B(\,\cdot\,,2^{-j}) } \abs{f(y) - \pi (y)}^v\d y \Big)^{\frac{1}{v}} \sep  \mathcal{M}^{u}_{p}( \mathbb{R}^d) }^{q} \Bigg )^{\frac{1}{q}}
    \end{align*}
    is finite (with usual modifications if $ q = \infty $ and/or $ v = \infty $). 
    Moreover, the quasi-norms $\norm{\cdot \sep \mathcal{N}^{s}_{u,p,q}(\R)}$ and $\norm{\cdot \sep \mathcal{N}^{s}_{u,p,q}(\R) }_{\osc}^{(\clubsuit)}$  are equivalent on $L_{p}^{\loc}(\R)$.
\end{theorem}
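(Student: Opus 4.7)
The plan is to prove both inclusions by coupling a Littlewood-Paley decomposition $f = \sum_{k\ge 0} f_k$ with $f_k := \cfi[\varphi_k \cf f]$, interpreted via Triebel's smooth kernel representations so that these pieces make sense even for general $f\in L^{\loc}_{p}(\R)$, to the geometric decomposition inherent in the oscillation quantity. The matching of the dyadic spatial scale $2^{-j}$ of the oscillation with the frequency scale $2^k$ is achieved by splitting $f = g_j + r_j$, where $g_j := \sum_{k\le j}f_k$ is the low-frequency part and $r_j := \sum_{k>j}f_k$ is the high-frequency tail. The parameter restriction $s > d\max\{0,1/p-1,1/p-1/v\}$ is exactly what is needed in this Morrey setting: $s > 0$ together with $s > d(1/p-1)$ guarantees through \autoref{lem_si_nlc} and \autoref{l_el_em_EN} that elements of $\mathcal{N}^s_{u,p,q}(\R)$ are locally integrable and actually belong to $\mathcal{M}^u_p(\R)$, while $s > d(1/p-1/v)$ encodes a Sobolev-type embedding that forces $f\in L^{\loc}_{v}(\R)$ and also allows one to pass from $L_v$-averages to $L_p$-based Peetre maximal functions.

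For the direction ``$\mathcal{N}^s_{u,p,q}(\R) \subseteq \{f : \norm{f \sep \mathcal{N}^s_{u,p,q}(\R)}^{(\clubsuit)}_{\osc} < \infty\}$'', on each ball $B(x,2^{-j})$ I would take $\pi_x\in\mathcal{P}_{N-1}$ to be the Taylor polynomial of order $N-1$ at $x$ of the smooth low-frequency truncation $g_j$. The local error $|f-\pi_x|$ is then controlled by $|g_j-\pi_x| + |r_j|$, where the first summand is handled via Taylor's remainder in terms of the $N$-th derivatives of the band-limited pieces $f_k$ (which gain a factor $2^{-jN}\cdot 2^{kN}$ through Bernstein-type inequalities) and the second by a Peetre-type maximal function of the $f_k$ with $k>j$. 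A discrete Hardy inequality (for which $s<N$ is the decisive assumption) combined with the Fefferman-Stein vector-valued maximal inequality on Morrey spaces (see, e.g., \cite{TangXu} and \cite{SawTan}) then converts the sum over $j$ into the desired Besov-Morrey bound; the condition $s > d(1/p-1/v)$ enters precisely when one transfers the $v$-means appearing in $\norm{\cdot}^{(\clubsuit)}_{\osc}$ to the standard Peetre maximal functions inside the $\mathcal{M}^u_p$-quasi-norm.

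For the converse inclusion I would build a nonsmooth atomic decomposition of $f$. Using a smooth partition of unity $\{\eta_{j,\ell}\}$ adapted to a dyadic covering of $\R$ by balls $B(x_{j,\ell},c\,2^{-j})$ together with the best $L_v$-polynomial approximants $\pi_{j,\ell}\in\mathcal{P}_{N-1}$ of $f$ on these balls, one may write a telescoping identity $f = \pi_{0,0} + \sum_{j\ge 0,\ell} a_{j,\ell}$, where each $a_{j,\ell}$ is supported near $x_{j,\ell}$, has vanishing moments up to order $N-1$, and satisfies $\norm{a_{j,\ell}\sep L_v}\lesssim\osc^{N-1}_v f(x_{j,\ell},c\,2^{-j})$. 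After rescaling, each $a_{j,\ell}$ becomes a valid nonsmooth $(s,p,v)$-atom for $\mathcal{N}^s_{u,p,q}(\R)$, so the atomic characterization of Besov-Morrey spaces (cf.\ \cite{SawTan}) returns the required estimate by $\norm{f\sep\mathcal{N}^s_{u,p,q}(\R)}^{(\clubsuit)}_{\osc}$. I expect the main obstacle to be the careful matching of scales in this decomposition combined with the handling of the Morrey quasi-norm in the low-integrability range $p\le 1$: one needs sharp Morrey versions of both the Fefferman-Stein maximal inequality and a Plancherel-Polya-Nikolskii inequality, and it is precisely at this step that the full parameter condition $s > d\max\{0,1/p-1,1/p-1/v\}$ must be invoked.
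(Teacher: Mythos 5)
Your proposal takes a genuinely different route from the paper. The paper's proof of \autoref{thm_HN_N_osc_1} is essentially a two-line citation: it records that the spaces $\mathcal{N}^{s}_{u,p,q}(\R)$ satisfy the axioms of the Hedberg--Netrusov framework (delegating that verification to \cite[Proposition~1]{HoN}) and then invokes \cite[Theorem~1.1.14(iii)]{HN}, which already packages the equivalence between the Littlewood--Paley quasi-norm and the oscillation quasi-norm for any space in that axiomatic class. You instead propose to rebuild the substance of that abstract theorem in the concrete Besov--Morrey setting, via a low/high frequency split with Taylor polynomials and Peetre maximal functions in one direction, and a telescoping nonsmooth atomic decomposition in the other. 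In spirit this mirrors the machinery underlying the Hedberg--Netrusov theorem itself, but it is a considerably longer and more technical path than the one the paper walks.

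The forward direction of your sketch looks plausible in outline (one small remark: since $\mathcal{N}^{s}_{u,p,q}$ is a Besov-type space, with the $\mathcal{M}^u_p$-quasi-norm taken inside the $\ell_q$-sum, you only need the boundedness of the scalar Hardy--Littlewood maximal operator on $\mathcal{M}^u_p(\R)$, not the full Fefferman--Stein vector-valued inequality — the latter belongs to the Triebel--Lizorkin--Morrey scale $\mathcal{E}^s_{u,p,q}$). The genuine gap lies in your backward direction. Your telescoping identity $f=\pi_{0,0}+\sum_{j,\ell} a_{j,\ell}$ produces building blocks $a_{j,\ell}$ that are supported near $x_{j,\ell}$, have vanishing moments up to order $N-1$, and satisfy an $L_v$-bound in terms of $\osc^{N-1}_v f$; but they have \emph{no smoothness}. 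The atomic characterization in \cite{SawTan} that you appeal to is for \emph{smooth} atoms (with controlled higher-order derivatives), so it does not apply to your $a_{j,\ell}$. A nonsmooth atomic decomposition for $\mathcal{N}^{s}_{u,p,q}(\R)$ — which is precisely what your argument needs — is not available in the reference you cite, and establishing it is essentially equivalent to re-proving (a large part of) the Hedberg--Netrusov theorem in this setting. This is the reason the paper goes through the HN framework: \cite[Theorem~1.1.14(iii)]{HN} already contains the nonsmooth-atom synthesis step under their axioms, and the only remaining work is the (comparatively light) verification that $\mathcal{N}^{s}_{u,p,q}(\R)$ satisfies those axioms, for which the authors cite \cite[Proposition~1]{HoN}. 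As it stands your backward direction is circular modulo this missing piece: either import the nonsmooth atomic decomposition from \cite{HN} (which collapses your argument onto the paper's) or prove it directly, e.g.\ by showing each $a_{j,\ell}$ can be expanded into smooth molecules compatible with \cite{SawTan} — a nontrivial step whose parameter restrictions would have to be re-derived and matched against the hypothesis $d\max\{0,\tfrac1p-1,\tfrac1p-\tfrac1v\}<s<N$.
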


\begin{proof}
 In order to prove the equivalence, we can use the theory developed in \cite[Sections~1.1--1.3]{HN}. It is not difficult to see that the Besov-Morrey spaces $\mathcal{N}^s_{u,p,q}(\mathbb{R}^d)$ fit into the setting described in \cite{HN}. 
 A proof for that can be found in \cite[Proposition~1]{HoN}, see also \cite[Proposition~1]{HoN2}.
A similar observation was made also in \cite[Section~4.5.1]{ysy}. Now the above equivalence is a simple consequence of \cite[Theorem~1.1.14(iii)]{HN}. We refer to the proof of \cite[Proposition~1]{HoN} for some more details.
\end{proof}

Notice that a similar result can also be found in \cite[Corollary~4.15]{ysy}. 
In what follows, we will use \autoref{thm_HN_N_osc_1} as a starting point to prove advanced characterizations of Besov-Morrey spaces in terms of local oscillations.

\begin{theorem}[Oscillations in Besov-Morrey Spaces]\label{thm_osc_Rda=2}
    Let $d,N \in \mathbb{N}$, $s\in\mathbb{R}$ as well as $0< p \leq u < \infty$, $0 < q,T,v \leq \infty$, and $0<R<\infty$ be such that
    \begin{align*}
		d\, \max \left\{  0, \frac{1}{p} - 1, \frac{1}{p} - \frac{1}{v} \right\} < s < N.
    \end{align*}
    Then 
    \begin{align*}
        \mathcal{N}^{s}_{u,p,q}(\R) 
        &= \left\{ f\in L_{\max\{1,p,v\}}^\loc(\R) \,:\, \norm{f \sep \mathcal{N}^{s}_{u,p,q}(\R)}^{(T,v,N)}_\osc < \infty \right\} \\
        &= \left\{ f\in L_{\max\{1,p,v\}}^\loc(\R) \,:\, \norm{f \sep \mathcal{N}^{s}_{u,p,q}(\R)}^{(R,T,v,N)}_\osc < \infty \right\}
    \end{align*}
    and the quasi-norms $\norm{\,\cdot \sep \mathcal{N}^{s}_{u,p,q}(\mathbb{R}^{d})}$, $\norm{\,\cdot \sep \mathcal{N}^{s}_{u,p,q}(\mathbb{R}^{d})}^{(T,v,N)}_\osc$, and $\norm{\,\cdot \sep \mathcal{N}^{s}_{u,p,q}(\R)}^{(R,T,v,N)}_\osc$ are mutually equivalent on $L_{\max\{1,p,v\}}^\loc(\R)$. 
\end{theorem}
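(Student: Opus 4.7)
The strategy is to reduce the claim to \autoref{thm_HN_N_osc_1}, which provides a dyadic-style oscillation characterization of $\mathcal{N}^s_{u,p,q}(\R)$ via the quasi-norm $\norm{\cdot \sep \mathcal{N}^s_{u,p,q}(\R)}^{(\clubsuit)}_\osc$. The first step is to prove that the integral semi-norm $\abs{f}^{(T,v,N)}_{\osc,\R}$ from \autoref{defi:norms_N} is equivalent to the dyadic sum
\begin{align*}
    \Bigl(\sum_{j=1}^\infty 2^{jq(s+d/v)} \norm{\inf_{\pi\in\mathcal{P}_{N-1}} \Bigl(\int_{B(\,\cdot\,,2^{-j})}\abs{f-\pi}^v \d y \Bigr)^{1/v} \sep \mathcal{M}^u_p(\R)}^q \Bigr)^{1/q}
\end{align*}
appearing in $(\clubsuit)$. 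This relies on the near-monotonicity of $t\mapsto \osc_v^{N-1} f(x,t)$ on dyadic scales: from $B(x,t_1)\subseteq B(x,t_2)$ together with the $t^{-d/v}$-prefactor one deduces $\osc_v^{N-1} f(x,t_1) \leq (t_2/t_1)^{d/v} \osc_v^{N-1} f(x,t_2)$ for $t_1 \leq t_2$. Specializing to $t_1 = 2^{-j-1}$, $t_2 = t$ and $t_1 = t$, $t_2 = 2^{-j}$ with $t \in [2^{-j-1}, 2^{-j}]$ yields
\begin{align*}
    \osc_v^{N-1} f(x,2^{-j-1}) \lesssim \osc_v^{N-1} f(x,t) \lesssim \osc_v^{N-1} f(x,2^{-j}),
\end{align*}
with constants independent of $x$. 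The specific value of $T$ only affects finitely many dyadic scales outside $(0,1]$, and their contribution is dominated by the leading ``zeroth-order'' term using $\pi\equiv 0$ in the infimum. Standard modifications handle $q=\infty$ and $v=\infty$.

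Combining this equivalence with \autoref{thm_HN_N_osc_1} yields
\begin{align*}
    \norm{\,\cdot \sep \mathcal{N}^s_{u,p,q}(\R)} \sim \norm{\,\cdot \sep \mathcal{N}^s_{u,p,q}(\R)}^{(R,T,v,N)}_\osc
\end{align*}
for any $R,T>0$ on $L^\loc_{\max\{1,p,v\}}(\R)$. To complete the chain, we establish $\norm{\,\cdot}^{(T,v,N)}_\osc \sim \norm{\,\cdot}^{(R,T,v,N)}_\osc$. Since both semi-norms share the same oscillation part, this reduces to comparing the leading Morrey terms $\norm{f\sep \mathcal{M}^u_p(\R)}$ and $\norm{(\int_{B(\,\cdot\,,R)} \abs{f}^v \d y)^{1/v}\sep \mathcal{M}^u_p(\R)}$ modulo $\abs{f}^{(T,v,N)}_{\osc,\R}$. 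The bound $\norm{f\sep \mathcal{M}^u_p(\R)}\lesssim \norm{f \sep \mathcal{N}^s_{u,p,q}(\R)}$ is furnished by \autoref{l_el_em_EN}, whose hypothesis $s>\sigma_p$ is implied. For the opposite direction we fix $x\in \R$ and a near-minimizer $\pi_x\in \mathcal{P}_{N-1}$ of $\osc^{N-1}_{v} f(x,R)$; writing $f = (f-\pi_x) + \pi_x$ on $B(x,R)$ and exploiting the equivalence of $L_q$-quasi-norms on the finite-dimensional space $\mathcal{P}_{N-1}$ restricted to a ball of fixed radius allows us to control $(\int_{B(x,R)}\abs{f}^v \d y)^{1/v}$ in terms of $R^{d/v}\osc^{N-1}_v f(x,R)$ and $\norm{f \sep L_p(B(x,R))}$. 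A Fubini-type rearrangement combined with \autoref{lem:discrete_morrey} converts this pointwise estimate into the required Morrey inequality.

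The principal technical obstacle lies in the polynomial comparison step, specifically the case $v>p$ in which Hölder's inequality points the wrong way. It is circumvented by working simultaneously with near-best $L_v$- and $L_p$-approximants on each ball $B(x,R)$, using the finite-dimensionality of $\mathcal{P}_{N-1}$ to pass from $L_v$- to $L_p$-estimates at the cost of a constant depending only on $R$, $v$, $p$, $N$, $d$, and absorbing the leftover error into the oscillation semi-norm. Chaining the resulting equivalences $\norm{\,\cdot\sep \mathcal{N}^s_{u,p,q}(\R)} \sim \norm{\,\cdot}^{(R,T,v,N)}_\osc \sim \norm{\,\cdot}^{(T,v,N)}_\osc$ then yields the mutual equivalence of all three quasi-norms on $L^\loc_{\max\{1,p,v\}}(\R)$ and completes the proof.
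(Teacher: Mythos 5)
Your overall strategy matches the paper's: reduce to the Hedberg--Netrusov characterization in \autoref{thm_HN_N_osc_1}. But Step~1 has a genuine gap in the handling of large $T$, which the theorem explicitly allows (including $T=\infty$). You assert that ``the specific value of $T$ only affects finitely many dyadic scales outside $(0,1]$,'' but this is false for $T=\infty$, and even for finite $T>1$ the difficulty is not the number of scales but the growing radii: choosing $\pi\equiv 0$ at scale $t\approx 2^j$, $j\geq 0$, only gives
\begin{align*}
\osc_v^{N-1}f(x,2^j) \lesssim \Big( 2^{-jd}\int_{B(x,2^j)}\abs{f(y)}^v\,\d y \Big)^{\frac{1}{v}},
\end{align*}
whose Morrey norm is \emph{not} directly controlled by the unit-ball average appearing in the $(\clubsuit)$ quasi-norm. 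The paper covers $B(x,2^j)$ by $K_j\sim 2^{jd}$ translates of unit balls, exploits translation invariance of $\mathcal{M}^{u/\mu}_{p/\mu}$ with $\mu:=\min\{p,v\}$, and then sums a geometric series whose convergence relies precisely on $s>d\max\{0,\tfrac1p-\tfrac1v\}$. Your proposal omits this mechanism, and without it the upper bound $\abs{f}^{(\infty,v,N)}_\osc\lesssim\norm{f\sep\mathcal{N}^s_{u,p,q}(\R)}$ is unjustified; this is where the nontrivial part of the hypothesis on $s$ actually enters.

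Regarding Step~2: the paper takes a simpler route, case-splitting on $p<v$ versus $v\leq p$, applying Step~1 with $v$ replaced by $\min\{p,v\}$, and then using H\"older together with the averaging bound $\norm{(\int_{B(\cdot,R)}\abs{f(y)}^w\,\d y)^{1/w}\sep\mathcal{M}^u_p(\R)}\lesssim\norm{f\sep\mathcal{M}^u_p(\R)}$ for $w\leq p$ from \cite[Lemma~3(vii)]{HoWe23}. Your polynomial-splitting argument is a genuinely different route and can be made to work, but the case analysis as stated is muddled: polynomials are needed exactly when $v>p$ (where the direct bound $(\int\abs{f}^v)^{1/v}\lesssim(\int\abs{f}^p)^{1/p}$ fails), yet in that regime H\"older \emph{inside} the polynomial argument points the right way, $(\int\abs{f-\pi}^p)^{1/p}\lesssim(\int\abs{f-\pi}^v)^{1/v}$; for $v\leq p$ no polynomials are needed at all. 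You should also spell out how general $R$ is reached from the fixed radius $1$ in \autoref{thm_HN_N_osc_1}---the paper invokes \cite[Lemma~4]{HoWe23}---and clarify your mention of a ``Fubini-type rearrangement'' and \autoref{lem:discrete_morrey}, which belongs to the Besov-type (not Besov-Morrey) setting and plays no role here.
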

\begin{proof}
    Throughout this proof w.l.o.g.\ we may assume that $q,v<\infty$. In addition, due to \cite[Lemma~4]{HoWe23} it is sufficient to consider $R=1$.
    As a preparation, we further note that
    \begin{align*}
        &\norm{\inf_{\pi \in \mathcal{P}_{N-1}} \!\Big( \int_{B(\,\cdot\,,2^{-j}) } \abs{f(y) - \pi (y)}^v\d y \Big)^{\frac{1}{v}} \sep \mathcal{M}^{u}_{p}( \mathbb{R}^d)} = 2^{-j \frac{d}{v}} \norm{ \osc_v^{N-1} f(\cdot,2^{-j}) \sep \mathcal{M}^{u}_{p}( \mathbb{R}^d)}.
    \end{align*}
    
    \emph{Step 1. }
    We first show the assertion for $\norm{\,\cdot \sep \mathcal{N}^{s}_{u,p,q}(\R)}^{(1,T,v,N)}_\osc$. To this end, in view of \autoref{thm_HN_N_osc_1}, it suffices to prove that
    \begin{align}\label{eq:proof_normeq}
		\norm{f \sep \mathcal{N}^{s}_{u,p,q}(\R)}^{(1,T,v,N)}_\osc 
		\sim \norm{f \sep \mathcal{N}^{s}_{u,p,q}(\R)}^{(\clubsuit)}_\osc,
		\qquad f\in L_{\max\{1,p,v\}}^\loc(\R).
    \end{align}

    \emph{Substep 1a. } Let us start by proving \eqref{eq:proof_normeq} in the special case $0<T\leq \frac{1}{2}$.
    We have
    \begin{align*}
		\int_{0}^T t^{-sq} \norm{ \osc_v^{N-1} f(\cdot, t) \sep \mathcal{M}^{u}_{p}( \mathbb{R}^d)}^{q} \frac{\d t}{t}
		&\leq \sum_{j=1}^{\infty} \int_{2^{-j-1}}^{2^{-j}} t^{-sq} \norm{ \osc_v^{N-1} f(\cdot,t) \sep \mathcal{M}^{u}_{p}( \mathbb{R}^d)}^{q} \frac{\d t}{t} \\
        &\lesssim \sum_{j=1}^{\infty} \norm{ \osc_v^{N-1} f(\cdot,2^{-j}) \sep \mathcal{M}^{u}_{p}( \mathbb{R}^d)}^{q} \int_{2^{-j-1}}^{2^{-j}} t^{-sq-1} \d t \\
        &\sim \sum_{j=1}^{\infty} 2^{jsq} \norm{ \osc_v^{N-1} f(\cdot,2^{-j}) \sep \mathcal{M}^{u}_{p}( \mathbb{R}^d)}^{q}
    \end{align*}
    since $\osc_{v}^{N-1} f(x,t) \lesssim \osc_{v}^{N-1} f(x,2^{-j})$ for $2^{-j-1}<t < 2^{-j}$; see, e.g.\ \cite[p.124]{ysy} or \cite[p.10]{Yab}. This shows
    \begin{align*}
        \abs{f}^{(T,v,N)}_{\osc} 
        &\lesssim \left [ \norm{ \Big( \int_{B(\cdot,1)} \abs{f(y)}^v\d y \Big)^{\frac{1}{v}} \sep \mathcal{M}^{u}_{p}(\mathbb{R}^d)}^{q} + \sum_{j=1}^{\infty} 2^{jsq} \norm{ \osc_v^{N-1} f(\cdot,2^{-j}) \sep \mathcal{M}^{u}_{p}( \mathbb{R}^d)}^{q} \right ]^{\frac{1}{q}} \\
        &=\norm{f \sep \mathcal{N}^{s}_{u,p,q}(\R)}_\osc^{(\clubsuit)},
    \end{align*}
    i.e., there holds ``$\lesssim$'' in \eqref{eq:proof_normeq}. In order to show the converse estimate, we observe that for $J\in\N$ with $2^{-J}\leq T$ there holds
    \begin{align*}
        \sum_{j=1}^{J} 2^{jsq} \norm{ \osc_v^{N-1} f(\cdot,2^{-j}) \sep \mathcal{M}^{u}_{p}( \mathbb{R}^d)}^{q}
        &\leq \sum_{j=1}^{J} 2^{j(s+\frac{d}{v})q} \norm{ \Big( \int_{B(\,\cdot\,,2^{-j}) } \abs{f(y)}^v\d y \Big)^{\frac{1}{v}} \sep \mathcal{M}^{u}_{p}(\mathbb{R}^d)}^{q} \\
        &\lesssim \norm{ \Big( \int_{B(\,\cdot\,,1) } \abs{f(y)}^v\d y \Big)^{\frac{1}{v}} \sep \mathcal{M}^{u}_{p}(\mathbb{R}^d)}^{q}
    \end{align*}
    while, on the other hand,
    \begin{align*}
       & \sum_{j=J+1}^{\infty} 2^{jsq} \norm{ \osc_v^{N-1} f(\cdot,2^{-j}) \sep \mathcal{M}^{u}_{p}( \mathbb{R}^d)}^{q} \\
        & \qquad \qquad \sim \sum_{j=J+1}^{\infty} \int_{2^{-j}}^{2^{-j+1}} t^{-sq-1} \d t \norm{ \osc_v^{N-1} f(\cdot,2^{-j}) \sep \mathcal{M}^{u}_{p}( \mathbb{R}^d)}^{q} \\
        & \qquad \qquad  \lesssim \sum_{j=J+1}^{\infty} \int_{2^{-j}}^{2^{-(j-1)}} t^{-sq} \norm{ \osc_v^{N-1} f(\cdot,t) \sep \mathcal{M}^{u}_{p}( \mathbb{R}^d)}^{q} \frac{\d t}{t} \\
        & \qquad \qquad  \leq \int_{0}^T t^{-sq} \norm{ \osc_v^{N-1} f(\cdot,t) \sep \mathcal{M}^{u}_{p}( \mathbb{R}^d)}^{q} \frac{\d t}{t}.
    \end{align*}
    Consequently,
    \begin{align*}
        \sum_{j=1}^{\infty} 2^{jsq} \norm{ \osc_v^{N-1} f(\cdot,2^{-j}) \sep \mathcal{M}^{u}_{p}( \mathbb{R}^d)}^{q}
        \lesssim \norm{ \Big( \int_{B(\,\cdot\,,1) } \abs{f(y)}^v\d y \Big)^{\frac{1}{v}} \sep \mathcal{M}^{u}_{p}(\mathbb{R}^d)}^q + \big( \abs{f}^{(T,v,N)}_{\osc} \big)^q
    \end{align*}
    which implies ``$\gtrsim$'' in \eqref{eq:proof_normeq} such that the proof of \eqref{eq:proof_normeq} for $T\leq \frac{1}{2}$ is complete.
    
    \emph{Substep 1b. } 
     Due to the monotonicity of $\norm{\,\cdot \sep \mathcal{N}^{s}_{u,p,q}(\R)}^{(1,T,v,N)}_\osc$ in $T$ it remains to show
    \begin{align}\label{eq:proof_normeq1b}
		\norm{f \sep \mathcal{N}^{s}_{u,p,q}(\R)}^{(1,\infty,v,N)}_\osc 
		\lesssim \norm{f \sep \mathcal{N}^{s}_{u,p,q}(\R)}^{(1,\frac{1}{2},v,N)}_\osc,
		\qquad f\in L_{\max\{1,p,v\}}^\loc(\R),
    \end{align}
    in order to complete Step 1. To do so, we note that similar to Substep 1a we find
    \begin{align*}
		\int_{\frac{1}{2}}^{\infty} t^{-sq} \norm{ \osc_v^{N-1} f(\cdot,t) \sep \mathcal{M}^{u}_{p}( \mathbb{R}^d)}^{q} \frac{\d t}{t} 
		&= \sum_{j=0}^{\infty} \int_{2^{j-1}}^{2^{j}} t^{-sq} \norm{ \osc_v^{N-1} f(\cdot,t) \sep \mathcal{M}^{u}_{p}( \mathbb{R}^d)}^{q} \frac{\d t}{t} \\
		&\lesssim \sum_{j=0}^{\infty} 2^{-jsq} \norm{ \osc_v^{N-1} f(\cdot,2^j) \sep \mathcal{M}^{u}_{p}( \mathbb{R}^d)}^{q}.
    \end{align*}
    Furthermore note that independent of $x\in\R$ there exist appropriate displacement vectors $w_k\in\Z$, $k\in\N$, such that with $K_j:=(2^{j+1}+1)^d\sim 2^{jd}$ there holds
    \begin{equation}\label{eq_displace_vec1}
	    B(x,2^j) \subset \bigcup_{k=1}^{K_j} B(x+w_k,1), \qquad j\in\N_0.
    \end{equation}
    Hence, choosing $\pi :\equiv 0$ for the polynomial, we have
    \begin{align*}
        \osc_{v}^{N-1} f(x,2^j) 
	&\leq \left( 2^{-jd} \int_{B(x,2^j)} \abs{f(y)}^v \d y \right)^{\frac{1}{v}} \\
	&\leq \left( \sum_{k=1}^{K_j} 2^{-jd} \int_{B(x+w_k,1)} \abs{f(y)}^v \d y \right)^{\frac{1}{v}}.
    \end{align*}
    Setting $\mu:=\min\{p,v\}$, we can use $\mu/v \leq 1$ to conclude
    \begin{align*}
        \norm{ \osc_v^{N-1} f(\cdot,2^j) \sep \mathcal{M}^{u}_{p}( \mathbb{R}^d)}
        &\leq 2^{-j \frac{d}{v}} \norm{ \left( \sum_{k=1}^{K_j} \int_{B(\,\cdot\,+w_k,1)} \abs{f(y)}^v \d y \right)^{\frac{\mu}{v}} \sep \mathcal{M}^{\frac{u}{\mu}}_{\frac{p}{\mu}}( \mathbb{R}^d)}^{\frac{1}{\mu}} \\
        &\leq 2^{-j \frac{d}{v}} \left( \norm{ \sum_{k=1}^{K_j} \left(\int_{B(\,\cdot\,+w_k,1)} \abs{f(y)}^v \d y \right)^{\frac{\mu}{v}} \sep \mathcal{M}^{\frac{u}{\mu}}_{\frac{p}{\mu}}( \mathbb{R}^d)} \right)^{\frac{1}{\mu}} .
    \end{align*}
    Due to $p/\mu \geq 1$ the space $\mathcal{M}^{\frac{u}{\mu}}_{\frac{p}{\mu}}( \mathbb{R}^d)$ is a translation invariant Banach space which together with $K_j\sim 2^{jd}$ further yields
    \begin{align*}
        \norm{ \osc_v^{N-1} f(\cdot,2^j) \sep \mathcal{M}^{u}_{p}( \mathbb{R}^d)}
        &\leq 2^{-j \frac{d}{v}} \left( \sum_{k=1}^{K_j} \norm{ \left(\int_{B(\,\cdot\,+w_k,1)} \abs{f(y)}^v \d y \right)^{\frac{\mu}{v}} \sep \mathcal{M}^{\frac{u}{\mu}}_{\frac{p}{\mu}}( \mathbb{R}^d)} \right)^{\frac{1}{\mu}} \\
        &= 2^{-j \frac{d}{v}} \, K_j^{\frac{1}{\mu}} \norm{ \left(\int_{B(\,\cdot\,,1)} \abs{f(y)}^v \d y \right)^{\frac{\mu}{v}} \sep \mathcal{M}^{\frac{u}{\mu}}_{\frac{p}{\mu}}( \mathbb{R}^d)}^{\frac{1}{\mu}} \\
        &\sim 2^{jd [\frac{1}{\mu}-\frac{1}{v}]} \norm{ \left(\int_{B(\,\cdot\,,1)} \abs{f(y)}^v \d y \right)^{\frac{1}{v}} \sep \mathcal{M}^{u}_{p}( \mathbb{R}^d)}.
    \end{align*}
    Since we assumed that $ d [ \frac{1}{\mu} - \frac{1}{v} ]  = d \,\max\!\big\{ 0, \frac{1}{p} - \frac{1}{v}\big\} < s $ we can conclude
    \begin{align*}
		& \int_{\frac{1}{2}}^{\infty} t^{-sq} \norm{ \osc_v^{N-1} f(\cdot,t) \sep \mathcal{M}^{u}_{p}( \mathbb{R}^d)}^{q} \frac{\d t}{t} \\
		& \qquad \qquad  \lesssim \sum_{j=0}^{\infty} 2^{-jsq} \norm{ \osc_v^{N-1} f(\cdot,2^j) \sep \mathcal{M}^{u}_{p}( \mathbb{R}^d)}^{q} \\
        & \qquad \qquad   \lesssim \sum_{j=0}^{\infty} 2^{j (d [\frac{1}{\mu}-\frac{1}{v}]-s)q} \norm{ \left(\int_{B(\,\cdot\,,1)} \abs{f(y)}^v \d y \right)^{\frac{1}{v}} \sep \mathcal{M}^{u}_{p}( \mathbb{R}^d)}^q \\
        & \qquad \qquad   \sim \norm{ \left(\int_{B(\,\cdot\,,1)} \abs{f(y)}^v \d y \right)^{\frac{1}{v}} \sep \mathcal{M}^{u}_{p}( \mathbb{R}^d)}^q.
    \end{align*}
    Together this shows 
    $$
        \abs{f}^{(\infty,v,N)}_\osc 
        \lesssim \abs{f}^{(\frac{1}{2},v,N)}_\osc + \norm{ \left(\int_{B(\,\cdot\,,1)} \abs{f(y)}^v \d y \right)^{\frac{1}{v}} \sep \mathcal{M}^{u}_{p}( \mathbb{R}^d)}
        = \norm{f \sep \mathcal{N}^{s}_{u,p,q}(\R)}^{(1,\frac{1}{2},v,N)}_\osc
    $$ 
   which implies \eqref{eq:proof_normeq1b} and hence \eqref{eq:proof_normeq}.
    So Step 1 of this proof is complete.

    \emph{Step 2. } 
    We use the findings from the previous step to show that also
    \begin{align*}
		\norm{f \sep \mathcal{N}^{s}_{u,p,q}(\mathbb{R}^{d})} 
		\sim \norm{f \sep \mathcal{N}^{s}_{u,p,q}(\R)}^{(T,v,N)}_\osc,
		\qquad f\in L_{\max\{1,p,v\}}^\loc(\R).
    \end{align*}

    \emph{Substep 2a (Lower Bound). } 
    Step 1 ensures that
    $$
	    \norm{f \sep \mathcal{N}^{s}_{u,p,q}(\mathbb{R}^{d})} 
		\sim \norm{f \sep \mathcal{N}^{s}_{u,p,q}(\R)}^{(1,T,v,N)}_\osc
		\geq \abs{f}^{(T,v,N)}_\osc
    $$
    while \autoref{l_el_em_EN} yields
    $$
        \norm{f \sep \mathcal{N}^{s}_{u,p,q}(\mathbb{R}^{d})} 
		\gtrsim \norm{f \sep \mathcal{M}^{u}_{p}(\R)}.
    $$

    \emph{Substep 2b (Upper Bound). } 
    To complete the proof we  distinguish two cases. If $p<v$, we have
    $$
	    d\, \max\! \left\{ 0, \frac{1}{p} - 1 \right \} \leq d\, \max\! \left \{ 0, \frac{1}{p} - 1, \frac{1}{p} - \frac{1}{v} \right\}.
    $$
    In other words, we can apply Step 1 (with $v:=p$) to obtain
    \begin{align*}
	    \norm{f \sep \mathcal{N}^{s}_{u,p,q}(\mathbb{R}^{d})}
	    &\lesssim \norm{f \sep \mathcal{N}^{s}_{u,p,q}(\R)}^{(1,T,p,N)}_\osc \\
	    &=\norm{\Big( \int_{B(\,\cdot\,,1)} \abs{f(y)}^p \d y \Big )^{\frac{1}{p}} \sep \mathcal{M}^{u}_{p}(\R)} + \abs{f}^{(T,p,N)}_\osc.
    \end{align*}
    Then H\"older's inequality yields $\osc_p^{N-1} f(\cdot,t)\lesssim \osc_v^{N-1} f(\cdot,t)$ on $\R$ such that the properties of Morrey spaces allow to upper bound the second summand (up to constants) by~$\abs{f}^{(T,v,N)}_\osc$.
    In order to estimate the first term as well, we use \cite[Lemma~3(vii)]{HoWe23} to see that
    \begin{align}\label{eq:avg_f}
        \norm{\Big( \int_{B(\,\cdot\,,1)} \abs{f(y)}^p \d y \Big )^{\frac{1}{p}} \sep \mathcal{M}^{u}_{p}(\R)}
        &\lesssim \norm{f \sep \mathcal{M}^{u}_{p}(\R)}.
    \end{align}
    If otherwise $v\leq p$, Step 1 yields
    \begin{align*}
	    \norm{f \sep \mathcal{N}^{s}_{u,p,q}(\mathbb{R}^{d})}
	    &\lesssim \norm{\Big( \int_{B(\,\cdot\,,1)} \abs{f(y)}^v \d y \Big )^{\frac{1}{v}} \sep \mathcal{M}^{u}_{p}(\R)} + \abs{f}^{(T,v,N)}_\osc,
    \end{align*}
    where according to \cite[Lemma~3(vii)]{HoWe23} there holds
    \begin{align*}
	    \norm{\Big( \int_{B(\,\cdot\,,1)} \abs{f(y)}^v \d y \Big )^{\frac{1}{v}} \sep \mathcal{M}^{u}_{p}(\R)}
	    &\lesssim \norm{f \sep \mathcal{M}^{u}_{p}(\R)}.
    \end{align*}
    Hence, the proof is complete.
\end{proof}

\subsection{Difference Characterizations of \texorpdfstring{$\mathcal{N}^{s}_{u,p,q}(\mathbb{R}^d)$}{Nsupq(Rd)} }

Next, we recall and enhance characterizations in terms of higher-order differences for the spaces~$\mathcal{N}^{s}_{u,p,q}(\mathbb{R}^d)$. The following assertion slightly extends \cite[Theorem~7]{H21} and \cite[Theorem~3]{HoN},  where the case $\norm{\,\cdot \sep \mathcal{N}^{s}_{u,p,q}(\mathbb{R}^{d})}^{(T,v,N)}_\Delta$ with $T\geq 1$ already is proved. 

\begin{theorem}[Differences in Besov-Morrey Spaces]\label{thm_diff_Rda=2}
    Let $d,N \in \mathbb{N}$, $0< p \leq u < \infty$, $0 < q,T,v \leq \infty$, $0<R<\infty$, and $s\in\mathbb{R}$ satisfy
    \begin{align*}
		d\, \max \left\{  0, \frac{1}{p} - 1, \frac{1}{p} - \frac{1}{v} \right\} < s < N.
    \end{align*}
    Then 
    \begin{align*}
        \mathcal{N}^{s}_{u,p,q}(\R) 
        &= \left\{ f\in L_{\max\{1,p,v\}}^\loc(\R) \,:\, \norm{f \sep \mathcal{N}^{s}_{u,p,q}(\R)}^{(T,v,N)}_\Delta < \infty \right\} \\
        &= \left\{ f\in L_{\max\{1,p,v\}}^\loc(\R) \,:\, \norm{f \sep \mathcal{N}^{s}_{u,p,q}(\R)}^{(R,T,v,N)}_\Delta < \infty \right\}
    \end{align*}
    and the quasi-norms $\norm{\,\cdot \sep \mathcal{N}^{s}_{u,p,q}(\mathbb{R}^{d})}$, $\norm{\,\cdot \sep \mathcal{N}^{s}_{u,p,q}(\mathbb{R}^{d})}^{(T,v,N)}_\Delta$, and $\norm{\,\cdot \sep \mathcal{N}^{s}_{u,p,q}(\R)}^{(R,T,v,N)}_\Delta$ are mutually equivalent on $L_{\max\{1,p,v\}}^\loc(\R)$. 
\end{theorem}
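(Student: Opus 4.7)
The plan is to lift the difference characterization of $\mathcal{N}^s_{u,p,q}(\R)$ already known for $T\ge 1$ (see \cite[Theorem~7]{H21} and \cite[Theorem~3]{HoN}) to the full range $T\in(0,\infty]$, closely following the steps of the proof of \autoref{thm_osc_Rda=2}. As a first reduction, \cite[Lemma~4]{HoWe23} allows us to fix $R=1$. Moreover, on $\R$ we have $V^N(x,t)=\{h\in\R:|h|<t\}$ independently of~$x$, so that all the relevant quantities are translation invariant.

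Since $\abs{f}^{(T,v,N)}_\Delta$ is non-decreasing in~$T$, the bound $\norm{f\sep \mathcal{N}^s_{u,p,q}(\R)}^{(1,T,v,N)}_\Delta \lesssim \norm{f\sep \mathcal{N}^s_{u,p,q}(\R)}^{(1,1,v,N)}_\Delta \sim \norm{f\sep \mathcal{N}^s_{u,p,q}(\R)}$ is automatic for any $T<1$. To prove the reverse inequality, we follow the dyadic approach of Substep~1b in the proof of \autoref{thm_osc_Rda=2} and show that the missing piece $\int_{\min\{T,1\}}^{\max\{T,1\}} t^{-sq} \norm{\cdots \sep \mathcal{M}^u_p(\R)}^q \frac{\d t}{t}$ is controlled by the first-order Morrey term. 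For $T\le 1$, this relies on the pointwise estimate
\begin{align*}
    \Big( t^{-d}\int_{|h|<t} |\Delta_h^N f(x)|^v\d h \Big)^{1/v} \lesssim t^{-d/v} \Big(\int_{B(x,N)}|f(y)|^v\d y\Big)^{1/v}, \qquad t\le 1,
\end{align*}
which follows from~\eqref{eq:Delta}, the triangle inequality and the substitution $y=x+kh$; taking $\mathcal{M}^u_p$-norms and integrating in $t\in[T,1]$ yields a constant (depending on~$T$) times $\norm{(\int_{B(\,\cdot\,,N)}|f|^v\d y)^{1/v}\sep\mathcal{M}^u_p(\R)}$, which is equivalent to the first-order term by a further invocation of \cite[Lemma~4]{HoWe23}. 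For $T>1$, we argue dyadically on the intervals $[2^j, 2^{j+1}]$, using displacement-vector coverings of large balls by unit balls \eqref{eq_displace_vec1} and the $\min\{p,v\}$-convexification of $\mathcal{M}^u_p(\R)$ exactly as in Substep~1b of the proof of \autoref{thm_osc_Rda=2}; convergence of the resulting geometric series is guaranteed by the assumption $s>d\max\{0,\tfrac{1}{p}-\tfrac{1}{v}\}$.

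To pass to the alternative version with the bare term $\norm{f\sep\mathcal{M}^u_p(\R)}$ in place of the averaged one, we proceed in parallel with Substep~2b in the proof of \autoref{thm_osc_Rda=2}: the upper bound follows from \cite[Lemma~3(vii)]{HoWe23} after distinguishing the cases $v\le p$ and $v>p$, while the lower bound is supplied by the embedding $\mathcal{N}^s_{u,p,q}(\R)\hookrightarrow \mathcal{M}^u_p(\R)$ from \autoref{l_el_em_EN}. This gives the asserted mutual equivalence of all three quasi-norms on $L^{\loc}_{\max\{1,p,v\}}(\R)$.

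The main technical obstacle is the dyadic estimate for $T>1$ in the case $v>p$: here the $p$-convexification of $\mathcal{M}^u_p(\R)$ produces a scaling factor $2^{jd(1/p-1/v)}$ from the displacement-vector covering, which is absorbed precisely by the parameter assumption $s>d(\tfrac{1}{p}-\tfrac{1}{v})$. Beyond this careful bookkeeping and the standard modifications for $q=\infty$ and/or $v=\infty$, the argument is a routine transfer of the oscillation proof to the difference setting.
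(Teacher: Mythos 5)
Your strategy diverges from the paper's: rather than routing the upper bound through the oscillation theorem (as the paper does, by means of Whitney's estimate $\norm{\osc_w^{N-1}f(\cdot,t) \sep \mathcal{M}^{u}_{p}(\R)} \lesssim \norm{ ( t^{-d} \int_{|h|<t} |\Delta^{N}_{h}f|^w \d h )^{1/w} \sep \mathcal{M}^{u}_{p}(\R)}$ followed by \autoref{thm_osc_Rda=2}), you try to directly extend the $T$-range of the difference quasi-norm. Unfortunately, this is where the argument breaks.

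The claimed pointwise estimate
\begin{align*}
    \Big( t^{-d}\int_{|h|<t} |\Delta_h^N f(x)|^v\,\d h \Big)^{1/v} \lesssim t^{-d/v} \Big(\int_{B(x,N)}|f(y)|^v\,\d y\Big)^{1/v}, \qquad t\le 1,
\end{align*}
is \emph{false}. Expanding $\Delta_h^N f(x)$ via \eqref{eq:Delta} and applying the triangle inequality, the $k=0$ term contributes $\bigl(t^{-d}\int_{|h|<t}|f(x)|^v\,\d h\bigr)^{1/v}\sim|f(x)|$, a pointwise quantity that is \emph{not} controlled by $t^{-d/v}(\int_{B(x,N)}|f|^v)^{1/v}$ for a.e.\ $x$ (take $f=\chi_{B(0,\varepsilon)}$ at $x=0$ and let $\varepsilon\to 0$). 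After taking $\mathcal{M}^u_p$-norms and integrating in $t\in[T,1]$, this produces an extra term $\norm{f\sep\mathcal{M}^u_p(\R)}$ which, crucially, cannot be absorbed into the averaged main term $\norm{(\int_{B(\cdot,1)}|f|^v\,\d y)^{1/v}\sep\mathcal{M}^u_p(\R)}$: indeed, for $f=\chi_{B(0,\varepsilon)}$ one computes $\norm{f\sep\mathcal{M}^u_p(\R)}\sim\varepsilon^{d/u}$ while $\norm{(\int_{B(\cdot,1)}|f|^v)^{1/v}\sep\mathcal{M}^u_p(\R)}\sim\varepsilon^{d/v}$, so the ratio blows up as $\varepsilon\to0$ whenever $u>v$, a case certainly admitted by the hypotheses. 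The same $k=0$ defect sabotages your $T>1$ dyadic argument as well.

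The cure, which is exactly what the paper uses for the corresponding $T$-extension in the Besov-type case (Step~2 of the proof of \autoref{thm_diff_B_Rda=2}), is to exploit the cancellation in the \emph{difference of differences}: write $\Delta^N_h f = \Delta^N_{h/M}f + \big(\Delta^N_h f - \Delta^N_{h/M}f\big)$ and observe via \eqref{eq:Delta} that $\Delta^N_h f(x) - \Delta^N_{h/M}f(x) = \sum_{k=1}^N(-1)^{N-k}\binom{N}{k}\bigl[f(x+kh)-f(x+kh/M)\bigr]$, where the $k=0$ terms cancel exactly. This bounds $|\Delta^N_h f(x)|$ by a small-step difference (contributing to $\abs{f}^{(T,v,N)}_\Delta$) plus sums of $|f(x+kh)|$ and $|f(x+kh/M)|$ with $k\ge 1$, all of which give genuine $L_v$-averages over balls. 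Without this trick — or, alternatively, without routing through Whitney's inequality and the oscillation theorem as the paper's Step~2 does — the naive triangle-inequality expansion is insufficient, and the argument as written has a genuine gap.
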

\begin{proof}
    \emph{Step 1 (Lower Bounds). }
    From \cite[Theorem~7]{H21} and \cite[Theorem 3]{HoN} it particularly follows that
    $$
        \norm{f \sep \mathcal{N}^{s}_{u,p,q}(\mathbb{R}^{d})} \gtrsim         \abs{f}^{(\infty,v,N)}_\Delta
    $$
    which, due to monotonicity w.r.t.\ $T$ together with \autoref{thm_osc_Rda=2}, proves the needed lower bounds, namely
    $$
        \norm{f \sep \mathcal{N}^{s}_{u,p,q}(\mathbb{R}^{d})} 
        \gtrsim \norm{ f \sep \mathcal{M}^{u}_{p}(\R)} + \norm{\Big( \int_{B(\,\cdot\,,R)} \abs{f(y)}^{v} \d y \Big )^{\frac{1}{v}} \sep \mathcal{M}^{u}_{p}(\R)} + \abs{f}^{(T,v,N)}_\Delta.
    $$

    \emph{Step 2 (Upper Bounds). }
    The corresponding upper bounds can be shown as follows.
    
    \emph{Substep 2a. } Here we employ Whitney's estimate~\cite[Theorem~A.1]{HN} to derive that for all $t>0$, $N\in\N$, and $w \leq p \leq u$ there holds
    \begin{align}\label{eq:osc_diff}
        \norm{\osc_w^{N-1}f(\cdot,t) \sep \mathcal{M}^{u}_{p}(\R)} 
        \lesssim \norm{ \Big( t^{-d} \int_{\abs{h}<t} \abs{\Delta^{N}_{h}f(\cdot)}^w \d h \Big)^{\frac{1}{w}} \sep \mathcal{M}^{u}_{p}(\R) } .
    \end{align}
    To this end, let $Q(x,t) \supseteq B(x,t)$ denote the cube of side-length $a:=2t$ centered at $x\in\R$. Then Whitney's estimate implies
    \begin{align*}
        \osc_w^{N-1}f(x,t)
        &\leq \inf_{\pi\in\mathcal{P}_{N-1}} \left( t^{-d} \int_{Q(x,t)} \abs{f(y)-\pi(y)}^w \d y \right)^{\frac{1}{w}} \\
        &\lesssim \left( t^{-d} \, (2t)^{-d} \int_{B(0,t)} \int_{Q(x,t)} \abs{\Delta^{N}_{h}f(y)}^w \d y \d h\right)^{\frac{1}{w}} \\
        &\sim \left( t^{-d} \int_{Q(x,t)} F_t(y)^w \d y\right)^{\frac{1}{w}},
    \end{align*}
    where we set 
    $$
        F_t(y) := \left( t^{-d}  \int_{\abs{h}<t} \abs{\Delta^{N}_{h}f(y)}^w \d h \right)^{\frac{1}{w}}, \qquad t>0,\, y\in\R.
    $$
    Then $R:=t\sqrt{d}$ shows
    $$
        \osc_w^{N-1}f(x,t) 
        \lesssim \left( t^{-d} \int_{Q(x,t)} \abs{F_t(y)}^w \d y\right)^{\frac{1}{w}}
        \lesssim \left( R^{-d} \int_{B(x,R)} \abs{F_t(y)}^w \d y\right)^{\frac{1}{w}}
    $$
    such that we can follow the lines of the proof of \cite[Lemma~3(vii)]{HoWe23} to obtain
    \begin{align*}
        \norm{\osc_w^{N-1}f(\cdot,t) \sep \mathcal{M}^{u}_{p}(\R)}
        &\lesssim \norm{\left( R^{-d} \int_{B(\,\cdot\,,R)} \abs{F_t(y)}^w \d y\right)^{\frac{1}{w}} \sep \mathcal{M}^{u}_{p}(\R)} \\
        &\leq \left( R^{-d} \int_{B(0,R)} \norm{ F_t(\cdot-h) \sep \mathcal{M}^{u}_{p}(\R)}^w \d h \right)^{\frac{1}{w}} \\
        &\lesssim \norm{ F_t \sep \mathcal{M}^{u}_{p}(\R)}.
    \end{align*}

    \emph{Substep 2b. }     
    To complete the proof, we distinguish two cases. If $p<v$, we have
    $$
	    d\, \max\! \left\{ 0, \frac{1}{p} - 1 \right \} \leq d\, \max\! \left \{ 0, \frac{1}{p} - 1, \frac{1}{p} - \frac{1}{v} \right\}.
    $$
    In other words, we can apply \autoref{thm_osc_Rda=2} (with $v:=p$) to obtain
    \begin{align}
	    \norm{f \sep \mathcal{N}^{s}_{u,p,q}(\mathbb{R}^{d})}
	    &\lesssim \norm{f \sep \mathcal{N}^{s}_{u,p,q}(\R)}^{(R,T,p,N)}_\osc \nonumber\\
	    &=\norm{\Big( \int_{B(\,\cdot\,,R)} \abs{f(y)}^p \d y \Big )^{\frac{1}{p}} \sep \mathcal{M}^{u}_{p}(\R)} + \abs{f}^{(T,p,N)}_\osc. \label{eq:proof_NRd}
    \end{align}
    Further H\"older's inequality yields 
    $$
        \Big( t^{-d} \int_{V^{N}(x,t)} \abs{\Delta^{N}_{h}f(x)}^p \d h \Big)^{\frac{1}{p}}
        \lesssim \Big( t^{-d} \int_{V^{N}(x,t)} \abs{\Delta^{N}_{h}f(x)}^v \d h \Big)^{\frac{1}{v}}, \qquad x\in\R,\,t>0,
    $$
    which together with \eqref{eq:osc_diff} for $w:=p$ shows $\abs{f}^{(T,p,N)}_\osc\lesssim \abs{f}^{(T,p,N)}_\Delta\lesssim \abs{f}^{(T,v,N)}_\Delta$ for the second summand.
    Similarly, using H\"older's inequality again, the main term can be estimated by
    \begin{align*}
        \norm{\Big( \int_{B(\,\cdot\,,R)} \abs{f(y)}^p \d y \Big )^{\frac{1}{p}} \sep \mathcal{M}^{u}_{p}(\R)}
        &\lesssim \norm{\Big( \int_{B(\,\cdot\,,R)} \abs{f(y)}^v \d y \Big )^{\frac{1}{v}} \sep \mathcal{M}^{u}_{p}(\R)}.
    \end{align*}
    When $\norm{f \sep \mathcal{N}^{s}_{u,p,q}(\R)}^{(T,p,N)}_\Delta$ is concerned, we may instead use \cite[Lemma~3(vii)]{HoWe23} to bound the first summand in \eqref{eq:proof_NRd} by $\norm{f \sep \mathcal{M}^{u}_{p}(\R)}$; cf.\ \eqref{eq:avg_f}. 
    
    If otherwise $v\leq p$, then \autoref{thm_osc_Rda=2} and~\eqref{eq:osc_diff} for $w:=v$ yield
    \begin{align*}
	    \norm{f \sep \mathcal{N}^{s}_{u,p,q}(\mathbb{R}^{d})}
	    &\lesssim \norm{\Big( \int_{B(\,\cdot\,,R)} \abs{f(y)}^v \d y \Big )^{\frac{1}{v}} \sep \mathcal{M}^{u}_{p}(\R)} + \abs{f}^{(T,v,N)}_\Delta \\
      & =\norm{f \sep \mathcal{N}^{s}_{u,p,q}(\R)}^{(R,T,v,N)}_\Delta,
    \end{align*}
    where for $\norm{f \sep \mathcal{N}^{s}_{u,p,q}(\R)}^{(T,v,N)}_\Delta$ we additionally use \cite[Lemma~3(vii)]{HoWe23} to bound
    \begin{align*}
	    \norm{\Big( \int_{B(\,\cdot\,,R)} \abs{f(y)}^v \d y \Big )^{\frac{1}{v}} \sep \mathcal{M}^{u}_{p}(\R)}
	    &\lesssim \norm{f \sep \mathcal{M}^{u}_{p}(\R)}.
    \end{align*}
    Hence, the proof is complete.
\end{proof}

\subsection{Characterizations via Local Oscillations for \texorpdfstring{$B^{s,\tau}_{p,q}(\mathbb{R}^d)$}{Bstaupq(Rd)}}

In this subsection, we prove characterizations of Besov-type spaces~$B^{s,\tau}_{p,q}(\mathbb{R}^d)$ in terms of oscillations. For that purpose, again we employ the theory developed by Hedberg and Netrusov~\cite{HN} as starting point. 

\begin{theorem}
\label{thm_HN_B_osc1}
    Let $d,N\in\N$, $ 0 < p < \infty$, $0 \leq \tau < \frac{1}{p}$, $ 0 < q,v \leq \infty $, and $s\in\mathbb{R}$ with
    \begin{align*}
		d\, \max \left\{  0, \frac{1}{p} - 1, \frac{1}{p} - \frac{1}{v} \right\} < s < N.
    \end{align*}
    Then a function $ f\in L_{p}^{\loc}(\R)$ belongs to $ B^{s,\tau}_{p,q}(\R) $ if and only if  $ f \in L_{v}^{\loc}(\R)$ and
    \begin{align*}
        \norm{f \sep B^{s,\tau}_{p,q}(\R) }^{(\clubsuit )}
        &:= \sup_{P \in \mathcal{Q} } \frac{1}{\abs{P}^{\tau}} \Bigg[ \Big ( \int_{P} \Big ( \int_{B(x,1)}\vert f(y) \vert^{v} \d y \Big )^{\frac{p}{v}} \d x \Big )^{\frac{q}{p}} \\
        &\quad+ \sum_{j = 1}^{\infty} 2^{jsq}  \Big ( \int_{P} 2^{\frac{djp}{v}} \Big (     \inf_{\pi \in \mathcal{P}_{N-1}} \!\!\Big( \int_{B(x,2^{-j}) } \abs{f(y) - \pi (y)}^v\d y \Big)^{\frac{1}{v}}      \Big )^{p} \d x \Big )^{\frac{q}{p}} \Bigg]^{\frac{1}{q}}   \end{align*}
    is finite (modifications if $ q = \infty $ and/or $ v = \infty $). 
    Moreover, the quasi-norms $\norm{\cdot\sep B^{s,\tau}_{p,q}(\R)}$ and $\norm{\cdot\sep B^{s,\tau}_{p,q}(\R)}^{(\clubsuit )}$  are equivalent on $L_{p}^{\loc}(\R)$.
\end{theorem}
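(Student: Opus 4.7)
The strategy mirrors the proof of \autoref{thm_HN_N_osc_1}: I would invoke the abstract machinery developed by Hedberg and Netrusov in \cite[Sections~1.1--1.3]{HN} and deduce the claimed equivalence as a consequence of their \cite[Theorem~1.1.14(iii)]{HN}. Concretely, the plan is to show that the Besov-type scale~$B^{s,\tau}_{p,q}(\R)$ fits into their axiomatic setting once one identifies the correct ``base quasi-norm'' on sequences of measurable functions $(a_k)_{k\in\N_0}$, namely
\begin{align*}
    \norm{(a_k)_{k\in\N_0} \sep E}
    := \sup_{P \in \mathcal{Q}} \frac{1}{\abs{P}^{\tau}} \bigg( \sum_{k=\max\{j_P,0\}}^{\infty} 2^{ksq} \norm{a_k \sep L_p(P)}^q \bigg)^{\frac{1}{q}},
\end{align*}
which reproduces \autoref{def_BTS} when the~$a_k$ are the Littlewood-Paley blocks of~$f$.

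My first step would be to verify that this~$E$ satisfies the axioms imposed in \cite[Sections~1.1--1.3]{HN} (admissibility with respect to the dyadic structure, shift-type properties, and boundedness of an appropriate Peetre-Fefferman-Stein maximal operator). I would do this in analogy with the Besov-Morrey case handled in \cite[Proposition~1]{HoN}, exploiting the localization results \cite[Theorem~6.1]{ysy} and the vector-valued maximal inequality \cite[Lemma~2.5]{ysy} for~$B^{s,\tau}_{p,q}(\R)$. The restriction $\tau < \frac{1}{p}$ combined with the lower bound $s > d\,\max\{0, \frac{1}{p}-1, \frac{1}{p}-\frac{1}{v}\}$ ensures that the sub-atomic/Peetre type estimates needed in the HN framework remain valid; the upper bound $s<N$ controls the polynomial degree used in Whitney-type arguments.

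Once the framework is established, \cite[Theorem~1.1.14(iii)]{HN} provides an equivalent description of $\norm{f \sep B^{s,\tau}_{p,q}(\R)}$ in terms of local bestapproximation errors by polynomials of degree less than~$N$, measured on balls of dyadic radii. Rewriting that expression and keeping track of the normalization---namely the factor $2^{jd/v}$ appearing in~$\clubsuit$, which comes from the identity $\abs{B(x,2^{-j})}^{-1/v} \sim 2^{jd/v}$ (the HN statement uses the mean over the ball whereas~$\clubsuit$ uses the integral)---yields exactly the quasi-norm~$\norm{\,\cdot \sep B^{s,\tau}_{p,q}(\R)}^{(\clubsuit)}$. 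The $j=0$ term is separated in the statement because it corresponds to balls of radius~$1$ and serves as the ``absolute'' part of the quasi-norm, analogous to $\norm{f \sep \mathcal{M}^u_p(\R)}$ in \autoref{thm_HN_N_osc_1}.

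The main obstacle will be the verification in the first step: the supremum over dyadic cubes~$P$ destroys translation invariance of~$E$ in the classical sense, so the standard HN arguments (which are usually formulated for translation-invariant base norms such as~$L_p$ or~$\mathcal{M}^u_p$) need to be adapted uniformly in~$P$. The key technical point is to run the Peetre maximal function estimates and the associated convolution inequalities with constants that are independent of~$P\in\mathcal{Q}$; this is precisely where the condition $\tau<\frac{1}{p}$ enters, via the simplified description of~$B^{s,\tau}_{p,q}(\R)$ recalled after \autoref{l_BT_bp2} (see \cite[Proposition~3.1]{Si1}), which effectively reduces the sup over cubes to a controlled family of local estimates.
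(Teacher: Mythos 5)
Your proposal follows essentially the same route as the paper: both arguments embed $B^{s,\tau}_{p,q}(\R)$ into the Hedberg--Netrusov axiomatic framework and then read off the oscillation characterization from \cite[Theorem~1.1.14(iii)]{HN}. The paper simply cites \cite[Section~3 and Proposition~3.3]{HoSi20}, where the verification of the HN axioms for the Besov-type base quasi-norm (including the uniform-in-$P$ issues you rightly flag) has already been carried out, instead of re-deriving it by analogy with the Besov--Morrey case of \cite[Proposition~1]{HoN}.
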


\begin{proof}
    In order to prove this statement, again we can use the machinery developed in \cite[Sections~1.1--1.3]{HN}. We find that Besov-type spaces $B^{s,\tau}_{p,q}(\mathbb{R}^d)$ fit into the setting described in~\cite{HN}. 
A detailed proof for that can be found in \cite[Section~3]{HoSi20}. 
Now the above equivalence is a simple consequence of \cite[Theorem~1.1.14(iii)]{HN}. Here we also can refer to the proof of \cite[Proposition~3.3]{HoSi20} for some more details.   
\end{proof}

Now we can apply \autoref{thm_HN_B_osc1} to deduce advanced characterizations for $B^{s,\tau}_{p,q}(\mathbb{R}^d)$ in terms of oscillations.

\begin{theorem}[Oscillations in Besov-Type Spaces]\label{thm_osc_B_Rda=2}
    Let $d,N \in \mathbb{N}$, $0< p < \infty$, $0 \leq \tau < \frac{1}{p}$, $0 < q,T,v \leq \infty$, $0<R<\infty$, and $s\in\mathbb{R}$ be such that
    \begin{align*}
		d\, \max \left\{  0, \frac{1}{p} - 1, \frac{1}{p} - \frac{1}{v} \right\} < s < N.
    \end{align*}
    Then 
    \begin{align*}
        B^{s,\tau}_{p,q}(\R) 
        &= \left\{ f\in L_{\max\{1,p,v\}}^\loc(\R) \,:\, \norm{f \sep B^{s,\tau}_{p,q}(\R) }^{(T,v,N)}_\osc < \infty \right\} \\
        &= \left\{ f\in L_{\max\{1,p,v\}}^\loc(\R) \,:\, \norm{f \sep B^{s,\tau}_{p,q}(\R) }^{(R,T,v,N)}_\osc < \infty \right\}
    \end{align*}
    and the quasi-norms $\norm{\,\cdot \sep B^{s,\tau}_{p,q}(\R)}$, $\norm{\,\cdot \sep B^{s,\tau}_{p,q}(\R)}^{(T,v,N)}_\osc$, and $\norm{\,\cdot \sep B^{s,\tau}_{p,q}(\R)}^{(R,T,v,N)}_\osc$ are mutually equivalent on $L_{\max\{1,p,v\}}^\loc(\R)$. 
\end{theorem}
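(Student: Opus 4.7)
The plan is to mirror the two-step strategy used in the proof of \autoref{thm_osc_Rda=2}, starting from \autoref{thm_HN_B_osc1} as the discrete Hedberg-Netrusov counterpart and keeping in mind that the Morrey quasi-norm $\norm{\,\cdot \sep \mathcal{M}^u_p(\R)}$ has to be replaced throughout by $\sup_{P\in\mathcal{Q}} \frac{1}{\abs{P}^\tau} \norm{\,\cdot \sep L_p(P)}$. By \autoref{lem:discrete_morrey} with $u:=1/(\frac{1}{p}-\tau)$, this latter expression is equivalent to $\norm{\,\cdot \sep \mathcal{M}^u_p(\R)}$, so all Morrey-space tools used in the Besov-Morrey proof remain available.

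First I would prove that for $0<T\leq \frac{1}{2}$ the equivalence
\begin{align*}
\norm{f \sep B^{s,\tau}_{p,q}(\R)}^{(1,T,v,N)}_\osc
\sim \norm{f \sep B^{s,\tau}_{p,q}(\R)}^{(\clubsuit)}
\end{align*}
holds, by decomposing the $t$-integral over dyadic rings $[2^{-j-1},2^{-j}]$ and exploiting the monotonicity $\osc_v^{N-1}f(x,t)\lesssim \osc_v^{N-1}f(x,2^{-j})$ for $t$ in such a ring. The outer $\sup_{P\in\mathcal{Q}} \abs{P}^{-\tau}$ commutes with both the (non-negative) sum in $j$ and the integral in $t$ exactly as in Substep 1a of the previous proof. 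Passing from $T\leq\frac{1}{2}$ to arbitrary $T\in(0,\infty]$ then requires bounding the large-$t$ tail $\int_{1/2}^\infty t^{-sq}\norm{\osc_v^{N-1}f(\cdot,t)\sep L_p(P)}^q \frac{\d t}{t}$. Here I would again use the covering \eqref{eq_displace_vec1} of $B(x,2^j)$ by $K_j\sim 2^{jd}$ unit balls $B(x+w_k,1)$ with $w_k\in\Z$, take the trivial polynomial $\pi\equiv 0$, and apply the $\mu$-triangle inequality with $\mu:=\min\{p,v\}$. Since $\frac{p}{\mu}\geq 1$, the space $\mathcal{M}^{u/\mu}_{p/\mu}(\R)$ (and hence, via \autoref{lem:discrete_morrey}, also the seminorm $\sup_{P} \abs{P}^{-\tau\mu}\norm{\,\cdot\,\sep L_{p/\mu}(P)}$) is a translation-invariant Banach space. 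This yields, after raising back to the power $1/\mu$, a bound of order $2^{jd[\frac{1}{\mu}-\frac{1}{v}]}$, and the assumption $d\,[\frac{1}{\mu}-\frac{1}{v}]<s$ permits summation of the geometric series in $j$.

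Finally, I would establish the equivalence with the genuine $B^{s,\tau}_{p,q}$ quasi-norm. The lower bound is immediate from Step 1 together with \autoref{l_BF_bp1} and \autoref{lem:discrete_morrey}, which together yield $\sup_{P\in\mathcal{Q}}\abs{P}^{-\tau}\norm{f\sep L_p(P)}\lesssim\norm{f\sep B^{s,\tau}_{p,q}(\R)}$. For the upper bound I would split cases: if $p<v$, apply Step 1 with $v$ replaced by $p$ and control the resulting quantities by H\"older's inequality; if $v\leq p$, Step 1 applies directly. To replace the averaged expression by $\sup_{P\in\mathcal{Q}}\abs{P}^{-\tau}\norm{f\sep L_p(P)}$ when passing to the alternative quasi-norm, I would invoke \cite[Lemma~3(vii)]{HoWe23} via \autoref{lem:discrete_morrey} exactly as in Substep 2b of the preceding proof.

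The main obstacle is the covering argument in the tail estimate: the supremum over $P\in\mathcal{Q}$ interacts with translations by $w_k$, so one must either enlarge $P$ to a dyadic super-cube $P^\ast$ with $\ell(P^\ast)\sim\ell(P)$ that absorbs all the shifts, or pass temporarily to the Morrey formulation via \autoref{lem:discrete_morrey} to exploit genuine translation invariance there, and then translate the bound back. The slack $\tau<\frac{1}{p}$ and the strict inequality $d\,[\frac{1}{\mu}-\frac{1}{v}]<s$ provide just enough room for this to go through.
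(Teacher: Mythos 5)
Your proposal is correct and follows essentially the same route as the paper: Step~1 of the paper's proof performs exactly the discrete-vs-continuous comparison you describe for $T\leq\frac12$, and the extension to large $T$ is handled by the same covering of $B(x,2^j)$ by $K_j\sim 2^{jd}$ unit balls combined with the $\mu$-triangle inequality, $\mu=\min\{p,v\}$. The translation subtlety you flag is resolved in the paper by bounding, for each shift $w_k\in\Z$, the quantity $\norm{(\int_{B(\,\cdot\,+w_k,1)}|f|^v\d y)^{1/v}\sep L_p(P)}$ directly by $\abs{P}^\tau\sup_{P'\in\mathcal{Q}}\abs{P'}^{-\tau}\norm{(\int_{B(\,\cdot\,,1)}|f|^v\d y)^{1/v}\sep L_p(P')}$, which is in substance your ``pass to the Morrey formulation via \autoref{lem:discrete_morrey}'' alternative; the paper then uses \autoref{lem:discrete_morrey} and \cite[Lemma~4]{HoWe23} to free the ball radius $R$, and closes exactly as you propose with \autoref{l_BF_bp1}, H\"older, and \cite[Lemma~3(vii)]{HoWe23}.
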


\begin{remark}\label{rem:main_term}
    Note that, according to Step 3 of the subsequent proof, in \autoref{thm_osc_B_Rda=2} the main term of our new quasi-norms can equivalently be replaced by
    $$
        \norm{ f \sep \mathcal{M}^u_p(\R)}
        \quad\text{or} \quad
        \norm{ \Big( \int_{B(\,\cdot\,,R) } \abs{f(y)}^v\d y \Big)^{\frac{1}{v}} \sep \mathcal{M}^u_p(\R)}, \quad R>0,
    $$
    where $u:=1/(\frac{1}{p}-\tau)$.
\end{remark}

\begin{proof}[Proof (of \autoref{thm_osc_B_Rda=2})]
    W.l.o.g.\ we again assume that $q,v<\infty$. In addition, note that for every $P:=Q_{j,k}\in\mathcal{Q}$ we have $\abs{P}=2^{-jd}$ and
    \begin{align*}
        \Big ( \int_{P} 2^{\frac{djp}{v}} \Big (  \inf_{\pi \in \mathcal{P}_{N-1}} \!\!\Big( \int_{B(x,2^{-j}) } \abs{f(y) - \pi (y)}^v\d y \Big)^{\frac{1}{v}}  \Big )^{p} \d x \Big )^{\frac{1}{p}}  = \norm{ \osc_v^{N-1} f(\cdot,2^{-j}) \sep L_p(P)}.
    \end{align*}

    \emph{Step 1. } We first show the assertion for $\norm{\,\cdot \sep B^{s,\tau}_{p,q}(\R)}^{(1,T,v,N)}_\osc$, i.e.
    \begin{align}\label{eq:proof_normeq_tau}
		\norm{f \sep B^{s,\tau}_{p,q}(\R)}^{(1,T,v,N)}_\osc 
		\sim \norm{f \sep B^{s,\tau}_{p,q}(\R)}^{(\clubsuit)} ,
		\qquad f\in L_{\max\{1,p,v\}}^\loc(\R).
    \end{align}
    Exactly as in Substep 1a of the proof of \autoref{thm_osc_Rda=2}, for $0<T\leq \frac{1}{2}$ and $P\in\mathcal{Q}$ we find
    \begin{align*}
		\int_{0}^T t^{-sq} \norm{ \osc_v^{N-1} f(\cdot, t) \sep L_p(P) }^{q} \frac{\d t}{t}
        &\lesssim \sum_{j=1}^{\infty} 2^{jsq} \norm{ \osc_v^{N-1} f(\cdot,2^{-j}) \sep L_p(P)}^{q}
    \end{align*}
    as well as
    \begin{align*}
        \sum_{j=1}^{\infty} 2^{jsq} \norm{ \osc_v^{N-1} f(\cdot,2^{-j}) \sep L_p(P)}^{q}
        &\lesssim \norm{ \Big( \int_{B(\,\cdot\,,1) } \abs{f(y)}^v\d y \Big)^{\frac{1}{v}} \sep L_p(P)}^q \\
        &\qquad \qquad + \int_{0}^T t^{-sq} \norm{ \osc_v^{N-1} f(\cdot, t) \sep L_p(P) }^{q} \frac{\d t}{t}.
    \end{align*}
    Together this shows that for $T\leq \frac{1}{2}$ indeed
    \begin{align*}
        &\norm{f \sep B^{s,\tau}_{p,q}(\R)}^{(\clubsuit )} \\
        &\quad= \sup_{P\in\mathcal{Q}} \frac{1}{\abs{P}^\tau} \left( 
        \norm{ \Big( \int_{B(\,\cdot\,,1) } \abs{f(y)}^v\d y \Big)^{\frac{1}{v}} \sep L_p(P)}^q
        + \sum_{j=1}^{\infty} 2^{jsq} \norm{ \osc_v^{N-1} f(\cdot,2^{-j}) \sep L_p(P)}^{q} \right)^{\frac{1}{q}}\\
        &\quad\sim \sup_{P\in\mathcal{Q}} \frac{1}{\abs{P}^\tau} \left(  \norm{ \Big( \int_{B(\,\cdot\,,1) } \abs{f(y)}^v\d y \Big)^{\frac{1}{v}} \sep L_p(P)}^q
        + \int_{0}^T t^{-sq} \norm{ \osc_v^{N-1} f(\cdot, t) \sep L_p(P) }^{q} \frac{\d t}{t} \right)^{\frac{1}{q}} \\
        &\quad\sim \sup_{P\in\mathcal{Q}} \frac{1}{\abs{P}^\tau} \norm{ \Big( \int_{B(\,\cdot\,,1) } \abs{f(y)}^v\d y \Big)^{\frac{1}{v}} \sep L_p(P)}
        + \abs{f}^{(T,v,N)}_{\osc,\tau}.
    \end{align*}
    To extend this to $T>\frac{1}{2}$, we adapt Substep 1b from the proof of \autoref{thm_osc_Rda=2} and show that
    \begin{align}\label{eq:proof_normeq1b_B}
		\norm{f \sep B^{s,\tau}_{p,q}(\R)}^{(1,\infty,v,N)}_\osc 
		\lesssim \norm{f \sep B^{s,\tau}_{p,q}(\R)}^{(1,\frac{1}{2},v,N)}_\osc,
		\qquad f\in L_{\max\{1,p,v\}}^\loc(\R).
    \end{align}
    For this purpose, we note that now for every $P\in\mathcal{Q}$
    \begin{align*}
		\int_{\frac{1}{2}}^{\infty} t^{-sq} \norm{ \osc_v^{N-1} f(\cdot,t) \sep L_{p}(P)}^{q} \frac{\d t}{t} 
		&\lesssim \sum_{j=0}^{\infty} 2^{-jsq} \norm{ \osc_v^{N-1} f(\cdot,2^j) \sep L_{p}(P)}^{q},
    \end{align*}
    where the oscillation can be estimated using $K_j\sim 2^{jd}$ displacements $w_k\in\Z$, see \eqref{eq_displace_vec1}, by
    $$
	    \osc_{v}^{N-1} f(x,2^j) 
	    \leq \left( \sum_{k=1}^{K_j} 2^{-jd} \int_{B(x+w_k,1)} \abs{f(y)}^v \d y \right)^{\frac{1}{v}}, \qquad x\in\R, j\in\N_0.
    $$
    Similar as above with $\mu:=\min\{p,v\}$ this yields
    \begin{align*}
        \norm{ \osc_v^{N-1} f(\cdot,2^j) \sep L_{p}(P)}
        &\leq 2^{-j \frac{d}{v}} \left( \sum_{k=1}^{K_j} \norm{ \left(\int_{B(\cdot\,+w_k,1)} \abs{f(y)}^v \d y \right)^{\frac{\mu}{v}} \sep L_{\frac{p}{\mu}}(P)} \right)^{\frac{1}{\mu}} \\
        &= 2^{-j \frac{d}{v}} \left( \sum_{k=1}^{K_j} \norm{ \left(\int_{B(\cdot\,+w_k,1)} \abs{f(y)}^v \d y \right)^{\frac{1}{v}} \sep L_{p}(P)}^\mu \right)^{\frac{1}{\mu}}.
    \end{align*}
    However, now for each $k$ we can estimate
    \begin{align*}
         \norm{ \left(\int_{B(\,\cdot\,+w_k,1)} \abs{f(y)}^v \d y \right)^{\frac{1}{v}} \sep L_{p}(P)}
         \leq \abs{P}^\tau \cdot \sup_{P'\in\mathcal{Q}} \frac{1}{\abs{P'}^\tau}  \norm{ \left(\int_{B(\cdot,1)} \abs{f(y)}^v \d y \right)^{\frac{1}{v}} \sep L_{p}(P')}
    \end{align*}
    which together with $K_j\sim 2^{jd}$ gives
    \begin{align*}
        \norm{ \osc_v^{N-1} f(\cdot,2^j) \sep L_{p}(P)}
        &\lesssim 2^{jd [\frac{1}{\mu}-\frac{1}{v}]} \cdot \abs{P}^\tau \cdot \sup_{P'\in\mathcal{Q}} \frac{1}{\abs{P'}^\tau}  \norm{ \left(\int_{B(\cdot,1)} \abs{f(y)}^v \d y \right)^{\frac{1}{v}} \sep L_{p}(P')}.
    \end{align*}
    As before, this allows to conclude
    \begin{align*}
		&\int_{\frac{1}{2}}^{\infty} t^{-sq} \norm{ \osc_v^{N-1} f(\cdot,t) \sep L_{p}(P)}^{q} \frac{\d t}{t} \\
        &\qquad \lesssim \sum_{j=0}^{\infty} 2^{j (d [\frac{1}{\mu}-\frac{1}{v}]-s)q} 
        \cdot \abs{P}^{\tau q} \cdot \left( \sup_{P'\in\mathcal{Q}} \frac{1}{\abs{P'}^\tau}  \norm{ \left(\int_{B(\cdot,1)} \abs{f(y)}^v \d y \right)^{\frac{1}{v}} \sep L_{p}(P')} \right)^q
    \end{align*}
    and since $ \sum_{j=0}^{\infty} 2^{j (d [\frac{1}{\mu}-\frac{1}{v}]-s)q} \sim 1 $ we find
    \begin{align*}
        \abs{f}^{(\infty,v,N)}_{\osc,\tau} 
        &\lesssim \abs{f}^{(\frac{1}{2},v,N)}_{\osc,\tau} + \sup_{P\in\mathcal{Q}} \frac{1}{\abs{P}^\tau}  \norm{ \left(\int_{B(\,\cdot\,,1)} \abs{f(y)}^v \d y \right)^{\frac{1}{v}} \sep L_{p}(P)} 
        = \norm{f \sep B^{s,\tau}_{p,q}(\R)}^{(1,\frac{1}{2},v,N)}_\osc
    \end{align*}
   which implies \eqref{eq:proof_normeq1b_B} and thus \eqref{eq:proof_normeq_tau} for all $T$.

   \emph{Step 2. } Let us extend \eqref{eq:proof_normeq_tau} to arbitrary $R>0$. This follows from \autoref{lem:discrete_morrey} and \cite[Lemma~4]{HoWe23}.

   \emph{Step 3. } Finally, we have to show that
    \begin{align*}
		\norm{f \sep B^{s,\tau}_{p,q}(\R)}
  \sim \norm{f \sep B^{s,\tau}_{p,q}(\R)}^{(T,v,N)}_\osc,
		\qquad f\in L_{\max\{1,p,v\}}^\loc(\R).
    \end{align*}
    Here we can argue exactly as in Step 2 of the proof of \autoref{thm_osc_Rda=2} with \autoref{l_el_em_EN} being replaced by \autoref{l_BF_bp1}, since \autoref{lem:discrete_morrey} implies that for $u:=1/(\frac{1}{p}-\tau)$
    $$
        \sup_{P\in\mathcal{Q}} \frac{1}{\abs{P}^\tau} \norm{ \Big( \int_{B(\,\cdot\,,R) } \abs{f(y)}^v\d y \Big)^{\frac{1}{v}} \sep L_p(P)} 
        \sim \norm{ \Big( \int_{B(\,\cdot\,,R) } \abs{f(y)}^v\d y \Big)^{\frac{1}{v}} \sep \mathcal{M}^u_p(\R)}, \quad R>0,
    $$
    and
    $$
        \sup_{P\in\mathcal{Q}} \frac{1}{\abs{P}^\tau} \norm{ f \sep L_p(P)} 
        \sim \norm{ f \sep \mathcal{M}^u_p(\R)}.
    $$
    Thus, the proof is complete.
\end{proof}

\subsection{Difference Characterizations of \texorpdfstring{$B^{s,\tau}_{p,q}(\R)$}{Bstaupq(Rd)} }

Here we recall and generalize existing characterizations of $B^{s,\tau}_{p,q}(\mathbb{R}^d)$ in terms of higher order differences. Some first results concerning this topic already can be found in the literature; see \cite[Subsection~4.3.2]{ysy}, \cite{Dri1}, and \cite{HoSi20}. The subsequent statement slightly extends \cite[Theorem~3.1]{HoSi20} which already proves the case $\norm{\,\cdot \sep B^{s,\tau}_{p,q}(\R)}^{(T,v,N)}_\Delta$ with $T\geq 1$. \autoref{rem:main_term} applies likewise.

\begin{theorem}\label{thm_diff_B_Rda=2}
    Let $d,N \in \mathbb{N}$, $0< p < \infty$, $0 \leq \tau < \frac{1}{p}$, $0 < q,T,v \leq \infty$, $0<R<\infty$, and $s\in\mathbb{R}$ with
    \begin{align*}
		d\, \max \left\{  0, \frac{1}{p} - 1, \frac{1}{p} - \frac{1}{v} \right\} < s < N.
    \end{align*}
    Then 
    \begin{align*}
        B^{s,\tau}_{p,q}(\R) 
        &= \left\{ f\in L_{\max\{1,p,v\}}^\loc(\R) \,:\, \norm{f \sep B^{s,\tau}_{p,q}(\R) }^{(T,v,N)}_\Delta < \infty \right\} \\
        &= \left\{ f\in L_{\max\{1,p,v\}}^\loc(\R) \,:\, \norm{f \sep B^{s,\tau}_{p,q}(\R) }^{(R,T,v,N)}_\Delta < \infty \right\}
    \end{align*}
    and the quasi-norms $\norm{\,\cdot \sep B^{s,\tau}_{p,q}(\R)}$, $\norm{\,\cdot \sep B^{s,\tau}_{p,q}(\R)}^{(T,v,N)}_\Delta$ as well as $\norm{\,\cdot \sep B^{s,\tau}_{p,q}(\R)}^{(R,T,v,N)}_\Delta$ are mutually equivalent on $L_{\max\{1,p,v\}}^\loc(\R)$. 
\end{theorem}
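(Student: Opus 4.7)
My plan is to follow the pattern of the proof of \autoref{thm_diff_Rda=2} for Besov-Morrey spaces, simply replacing \autoref{thm_osc_Rda=2} by its Besov-type counterpart \autoref{thm_osc_B_Rda=2} throughout. The lower bounds will rest on \cite[Theorem~3.1]{HoSi20}, which already establishes the claim for $T\ge 1$, and the extension to arbitrary $T>0$ will be handled by monotonicity of $|f|^{(T,v,N)}_{\Delta,\tau}$ in $T$ together with a standard absorption of the large-scale piece into the main term.

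For the lower bounds, monotonicity will give $|f|^{(T,v,N)}_{\Delta,\tau}\le |f|^{(\infty,v,N)}_{\Delta,\tau}\lesssim \norm{f\sep B^{s,\tau}_{p,q}(\R)}$, where the last inequality is \cite[Theorem~3.1]{HoSi20}. For the main term I would mimic Step~3 of the proof of \autoref{thm_osc_B_Rda=2}: \autoref{lem:discrete_morrey} identifies $\sup_P |P|^{-\tau}\norm{f\sep L_p(P)}$ with $\norm{f\sep \mathcal{M}^u_p(\R)}$ for $u:=1/(\frac{1}{p}-\tau)$, and \autoref{l_BF_bp1} then yields the embedding $B^{s,\tau}_{p,q}(\R)\hookrightarrow \mathcal{M}^u_p(\R)$. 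The averaged form of the main term reduces to this via \cite[Lemma~3(vii)]{HoWe23}.

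For the upper bounds, by \autoref{thm_osc_B_Rda=2} $\norm{f\sep B^{s,\tau}_{p,q}(\R)}$ is equivalent to the main term plus $|f|^{(T,v,N)}_{\osc,\tau}$, so it will suffice to bound the latter by $|f|^{(T,v,N)}_{\Delta,\tau}$, up to the main term. Following Substep~2b of the proof of \autoref{thm_diff_Rda=2}, I will split into two cases. If $p<v$, I would first apply \autoref{thm_osc_B_Rda=2} with $v:=p$ (admissible since $d\max\{0,\frac{1}{p}-1\}\le d\max\{0,\frac{1}{p}-1,\frac{1}{p}-\frac{1}{v}\}<s$), then use Whitney's inequality~\cite[Theorem~A.1]{HN} with $w:=p$, translating the needed $L_p(P)$-Whitney bound through \autoref{lem:discrete_morrey} to its Morrey counterpart already established in Substep~2a of the proof of \autoref{thm_diff_Rda=2}; a concluding application of H\"older's inequality upgrades $|f|^{(T,p,N)}_{\Delta,\tau}$ to $|f|^{(T,v,N)}_{\Delta,\tau}$. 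If instead $v\le p$, the same reasoning with $w:=v$ directly gives $|f|^{(T,v,N)}_{\osc,\tau}\lesssim |f|^{(T,v,N)}_{\Delta,\tau}$. The two variants of the main term are again equivalenced via \cite[Lemma~3(vii)]{HoWe23} and \autoref{lem:discrete_morrey}.

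The hard part will be the structural gap between Besov-type and Morrey seminorms: in the former, the supremum over dyadic cubes sits \emph{outside} the $t$-integral, ruling out an immediate pointwise-in-$t$ reduction to the Morrey case. I plan to bridge this via \autoref{lem:discrete_morrey}, which at each fixed $t$ delivers the required Morrey/Besov-type equivalence (as was already successfully exploited in Step~3 of \autoref{thm_osc_B_Rda=2}'s proof). The subsequent absorption of the large-scale contribution ($t\ge T$) into the main term---needed to upgrade \cite[Theorem~3.1]{HoSi20} from $T\ge 1$ to arbitrary $T>0$---will hinge crucially on the smoothness assumption $s>d\max\{0,\frac{1}{p}-1,\frac{1}{p}-\frac{1}{v}\}$ to guarantee summability in $t$ of the displacement/covering bound used in Substep~1b of the proof of \autoref{thm_osc_B_Rda=2}.
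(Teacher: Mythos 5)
Your plan mirrors the Besov--Morrey proof (\autoref{thm_diff_Rda=2}) as closely as possible, but the paper in fact takes a \emph{different} route for the Besov-type upper bound, and there is a genuine reason for that. The Whitney-based reduction from oscillations to ball means of differences, which drives Substep~2a of \autoref{thm_diff_Rda=2}'s proof, produces for each $x$ an average of $F_t(y):=\big(t^{-d}\int_{|h|<t}|\Delta^N_hf(y)|^w\,\d h\big)^{1/w}$ over the ball $B(x,t\sqrt d)$. In the Morrey setting this is absorbed by translation invariance of $\mathcal{M}^u_p(\R)$. In the Besov-type setting, for $x\in P$ the relevant integration spills outside $P$ into a $t$-dependent enlargement of $P$; since the supremum over $P$ sits \emph{outside} the $t$-integral in $|f|^{(T,p,N)}_{\osc,\tau}$ and $|f|^{(T,p,N)}_{\Delta,\tau}$, no pointwise-in-$t$ estimate of the form ``$\sup_P |P|^{-\tau}\|\osc_p^{N-1}f(\cdot,t)\,|\,L_p(P)\|\lesssim\sup_P |P|^{-\tau}\|F_t\,|\,L_p(P)\|$'' closes the argument. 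Feeding such a bound into the $t$-integral only yields $\big(\int_0^T t^{-sq}\big(\sup_P|P|^{-\tau}\|F_t\,|\,L_p(P)\|\big)^q\frac{\d t}{t}\big)^{1/q}$, which has the sup \emph{inside} and is in general larger than $|f|^{(T,p,N)}_{\Delta,\tau}$. This is exactly the structural gap you flag at the end of your proposal, but \autoref{lem:discrete_morrey} does not bridge it: that lemma identifies $\sup_P|P|^{-\tau}\|g\,|\,L_p(P)\|$ with $\|g\,|\,\mathcal{M}^u_p\|$ for a single fixed function $g$, which is exactly how the paper uses it (only for the \emph{main term}, where no $t$-integral intervenes) in Step~3 of the proof of \autoref{thm_osc_B_Rda=2}. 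Applied ``at each fixed $t$'' it produces precisely the wrong-ordered quantity above.

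The paper avoids Whitney altogether for the Besov-type upper bound. Instead, in Step~2 of the actual proof it works directly with differences: it writes $\Delta^N_hf = \Delta^N_{h/2^{J+1}}f + \big(\Delta^N_hf - \Delta^N_{h/2^{J+1}}f\big)$, observes from~\eqref{eq:Delta} that the second piece is controlled by pointwise values of $|f|$, and after a substitution $\sigma = h/2^{J+1}$ converts the contribution from small dyadic levels $j\le J$ into the contribution from large levels $k\ge J+1$ plus a term controlled by $\int_{B(\cdot,N)}|f(y)|^v\,\d y$. Crucially, all these estimates are made \emph{within the same cube $P$}, so the sup over $P$ can be kept outside throughout; the resulting inequality is then combined with the intrinsic difference characterization \cite[Proposition~3.3]{HoSi20} (the difference analogue of \autoref{thm_HN_B_osc1}). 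Steps~3 and~4 then upgrade from $R=N$, $T\le\frac12$ to general $R,T$ via \autoref{lem:discrete_morrey} and \cite[Lemma~4]{HoWe23} (for the main term only) and repeat Step~3 of \autoref{thm_osc_B_Rda=2}'s proof for the variant with $\|f\,|\,L_p(P)\|$ in place of the averaged main term. Your Step~1 (lower bounds via \cite[Theorem~3.1]{HoSi20}, monotonicity in $T$, and \autoref{l_BF_bp1} plus \autoref{lem:discrete_morrey} for the main term) does match the paper; the upper bound, however, requires replacing the Whitney reduction by a cube-preserving telescoping argument of the sort just described.
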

\begin{proof}
    Once more w.l.o.g.\ we assume $q,v<\infty$.
    
    \emph{Step 1 (Lower Bounds). }
    From \cite[Theorem~3.1]{HoSi20} it particularly follows that
    $$
        \norm{f \sep B^{s,\tau}_{p,q}(\R)} \gtrsim         \abs{f}^{(\infty,v,N)}_{\Delta,\tau}
    $$
    which due to monotonicity w.r.t.\ $T$ together with \autoref{thm_osc_B_Rda=2} proves the needed lower bounds, exactly as in Step 1 in the proof of \autoref{thm_diff_Rda=2}.

    \emph{Step 2 (Upper Bound for $T\leq \frac{1}{2}$ and $R:=N$). } Here we show that for $0<T\leq \frac{1}{2}$ there holds
    \begin{align}\label{eq:proof_tau}
        \norm{f \sep B^{s,\tau}_{p,q}(\R)} \lesssim \norm{f \sep B^{s,\tau}_{p,q}(\R)}^{(N,T,v,N)}_{\Delta}, \qquad f\in L_{\max\{1,p,v\}}^\loc(\R).
    \end{align}
    To this end, let $J\in\N$ with $2^{-J}\leq T$ and note that for $f\in L_{\max\{1,p,v\}}^\loc(\R)$, $x\in\R$, and $h>0$ there holds
    \begin{align*}
        \abs{\Delta^{N}_{h}f(x)} 
        &\leq \abs{\Delta^{N}_{\frac{h}{2^{J+1}}}f(x)} + \abs{\Delta^{N}_{h}f(x) - \Delta^{N}_{\frac{h}{2^{J+1}}}f(x)} \\
        &= \abs{\Delta^{N}_{\frac{h}{2^{J+1}}}f(x)} + \abs{\sum_{k=1}^N (-1)^{N-k} \binom{N}{k} \, \left[ f(x+kh) - f\left(x+k\frac{h}{2^{J+1}}\right)\right]} \\
        &\lesssim \abs{\Delta^{N}_{\frac{h}{2^{J+1}}}f(x)} + \sum_{k=1}^N \abs{f(x+kh)} + \sum_{k=1}^N \abs{f\left(x+k\frac{h}{2^{J+1}}\right)}.
    \end{align*}
    Here we used \eqref{eq:Delta}. Thus for each $j\in\{1,\ldots,J\}$ we obtain
    \begin{align*}
        \Big( 2^{jd} \int_{V^{N}(x,2^{-j})} \abs{\Delta^{N}_{h}f(x)}^v \d h \Big)^{\frac{1}{v}}
        &\lesssim \Big( 2^{jd} \int_{V^{N}(x,2^{-j})} \abs{\Delta^{N}_{\frac{h}{2^{J+1}}}f(x)}^v \d h \Big)^{\frac{1}{v}} \\
        & \qquad + \sum_{k=1}^N \Big( 2^{jd} \int_{V^{N}(x,2^{-j})} \abs{f(x+kh)}^v \d h \Big)^{\frac{1}{v}} \\
        & \qquad + \sum_{k=1}^N \Big( 2^{jd} \int_{V^{N}(x,2^{-j})} \abs{f\left(x+k\frac{h}{2^{J+1}}\right)}^v \d h \Big)^{\frac{1}{v}}.
    \end{align*}
    Substituting $\sigma:=\frac{h}{2^{J+1}}$ we conclude
    \begin{align*}
        &\Big( 2^{jd} \int_{V^{N}(x,2^{-j})} \abs{\Delta^{N}_{h}f(x)}^v \d h \Big)^{\frac{1}{v}} \\
        &\quad\lesssim \Big( 2^{(J+1+j)d} \int_{V^{N}(x,2^{-(J+1+j)})} \abs{\Delta^{N}_{\sigma}f(x)}^v \d \sigma \Big)^{\frac{1}{v}} \\
        &\quad\qquad+ \sum_{k=1}^N \Big( 2^{jd} \int_{V^{N}(x,2^{-j})} \abs{f(x+kh)}^v \d h \Big)^{\frac{1}{v}} \\
        &\quad\qquad+ \sum_{k=1}^N \Big( 2^{(J+1+j)d} \int_{V^{N}(x,2^{-(J+1+j)})} \abs{f\left(x+k\sigma\right)}^v \d \sigma \Big)^{\frac{1}{v}} \\
        &\quad\lesssim \Big( 2^{(J+1+j)d} \int_{V^{N}(x,2^{-(J+1+j)})} \abs{\Delta^{N}_{\sigma}f(x)}^v \d \sigma \Big)^{\frac{1}{v}} + 2^{(J+1+j)\frac{d}{v}} \Big( \int_{B(x,N)} \abs{f(y)}^v \d y \Big)^{\frac{1}{v}}
    \end{align*}
    and hence for every $P\in\mathcal{Q}$
    \begin{align*}
        &\sum_{j=1}^{J} 2^{jsq} \norm{ \Big( 2^{jd} \int_{V^{N}(\,\cdot\,,2^{-j})} \abs{\Delta^{N}_{h}f(\cdot)}^v \d h \Big)^{\frac{1}{v}} \sep L_p(P)}^{q} \\
        & \qquad \qquad   \lesssim \sum_{j=1}^{J} 2^{jsq} \norm{ \Big( 2^{(J+1+j)d} \int_{V^{N}(\,\cdot\,,2^{-(J+1+j)})} \abs{\Delta^{N}_{\sigma}f(\cdot)}^v \d \sigma \Big)^{\frac{1}{v}} \sep L_p(P)}^{q} \\
        &\qquad \qquad \qquad \qquad    + \sum_{j=1}^{J} 2^{jsq} 2^{(J+1+j)\frac{d}{v}q} \norm{ \Big( \int_{B(\,\cdot\,,N)} \abs{f(y)}^v \d y \Big)^{\frac{1}{v}} \sep L_p(P)}^{q} \\
        & \qquad \qquad    \lesssim \norm{ \Big( \int_{B(\,\cdot\,,N)} \abs{f(y)}^v \d y \Big)^{\frac{1}{v}} \sep L_p(P)}^{q} \\
        & \qquad \qquad \qquad \qquad    + \sum_{k=J+1}^{\infty} 2^{ksq} \norm{ \Big( 2^{kd} \int_{V^{N}(\,\cdot\,,2^{-k})} \abs{\Delta^{N}_{h}f(\cdot)}^v \d h \Big)^{\frac{1}{v}} \sep L_p(P)}^{q}.
    \end{align*}
    Now \cite[Proposition~3.3]{HoSi20} in combination with the estimate above implies
    \begin{align*}
        \norm{f \sep B^{s,\tau}_{p,q}(\R)} 
        &\sim \sup_{P\in\mathcal{Q}} \frac{1}{\abs{P}^\tau} \Bigg( 
        \norm{ \Big( \int_{B(\,\cdot\,,1) } \abs{f(y)}^v\d y \Big)^{\frac{1}{v}} \sep L_p(P)}^q \\
        &\qquad\qquad\quad + \sum_{j=1}^{\infty} 2^{jsq} \norm{ \Big( 2^{jd} \int_{V^{N}(\,\cdot\,,2^{-j})} \abs{\Delta^{N}_{h}f(\cdot)}^v \d h \Big)^{\frac{1}{v}} \sep L_p(P)}^{q} \Bigg)^{\frac{1}{q}} \\
        &\lesssim \sup_{P\in\mathcal{Q}} \frac{1}{\abs{P}^\tau} \Bigg( 
        \norm{ \Big( \int_{B(\,\cdot\,,N)} \abs{f(y)}^v \d y \Big)^{\frac{1}{v}} \sep L_p(P)} \\
        &\qquad\qquad\quad + \left[\sum_{k=J+1}^{\infty} 2^{ksq} \norm{ \Big( 2^{kd} \int_{V^{N}(\,\cdot\,,2^{-k})} \abs{\Delta^{N}_{h}f(\cdot)}^v \d h \Big)^{\frac{1}{v}} \sep L_p(P)}^{q}\right]^{\frac{1}{q}} \Bigg),
    \end{align*}
    where similar to Step 1 in the proof of \autoref{thm_osc_Rda=2} we have
    \begin{align*}
        &\sum_{k=J+1}^{\infty} 2^{ksq} \norm{ \Big( 2^{kd} \int_{V^{N}(\,\cdot\,,2^{-k})} \abs{\Delta^{N}_{h}f(\cdot)}^v \d h \Big)^{\frac{1}{v}} \sep L_p(P)}^{q} \\
        &\qquad \qquad \lesssim \int_0^T t^{-sq} \norm{ \Big( t^{-d} \int_{V^{N}(\,\cdot\,,t)} \abs{\Delta^{N}_{h}f(\cdot)}^v \d h \Big)^{\frac{1}{v}} \sep L_p(P)}^{q} \frac{\d t}{t}  \leq \abs{P}^{\tau q} \left(\abs{f}^{(T,v,N)}_{\Delta,\tau} \right)^q.
    \end{align*}
    This verifies \eqref{eq:proof_tau}.

    \emph{Step 3. } Using the monotonicity of $\abs{f}^{(T,v,N)}_{\Delta,\tau}$ in $T$ together with \autoref{lem:discrete_morrey} and \cite[Lemma~4]{HoWe23} allows to extend \eqref{eq:proof_tau} to 
    \begin{align*}
        \norm{f \sep B^{s,\tau}_{p,q}(\R)} 
        \lesssim \norm{f \sep B^{s,\tau}_{p,q}(\R)}^{(R,T,v,N)}_{\Delta}, \qquad f\in L_{\max\{1,p,v\}}^\loc(\R),
    \end{align*}
    with arbitrary $0<R<\infty$ and $0<T \leq \infty$. 
    In combination with Step 1 this shows the claim for $\norm{\cdot \sep B^{s,\tau}_{p,q}(\R)}^{(R,T,v,N)}_{\Delta}$.

    \emph{Step 4. } Literally the same arguments as in Step 3 of the proof of \autoref{thm_osc_B_Rda=2} can now be used to derive the assertion for $\norm{\cdot \sep B^{s,\tau}_{p,q}(\R)}^{(T,v,N)}_{\Delta}$ which completes the proof.
\end{proof}

\section{Characterizations on Domains}\label{sect:characterizations_domains}
In this section we shall derive the main results of this paper, namely the intrinsic characterizations of Besov-Morrey spaces $\mathcal{N}^{s}_{u,p,q}(\Omega)$ and Besov-type spaces $B^{s,\tau}_{p,q}(\Omega)$ on Lipschitz domains $\Omega \subset \mathbb{R}^d$ in terms of local oscillations and differences.

\subsection{General Lower Bounds}
Let us start by proving the lower bounds. From Theorems \ref{thm_osc_Rda=2} and \ref{thm_diff_Rda=2} the following assertion (valid for general domains) can be obtained easily.
\begin{prop}[Lower Bound in Besov-Morrey Spaces on Domains]\label{prop:N_Omega_lower}
    For $d\in\N$ let $\Omega\subsetneq\R$ be any domain.
    Further, let $0< p \leq u < \infty$, $0 < q,T,v \leq \infty$, $0<R<\infty$, $N \in \mathbb{N}$, and $s\in\mathbb{R}$ be such that
	\begin{align*}
		d\, \max\! \left\{ 0, \frac{1}{p} - 1, \frac{1}{p} - \frac{1}{v} \right\} < s < N.
	\end{align*}
	Then for $f\in \mathcal{N}^{s}_{u,p,q}(\Omega)$ there holds $f \in L_{\max\{1,p,v\}}^\loc(\Omega)$ as well as
	$$
	    \norm{f\sep \mathcal{N}^{s}_{u,p,q}(\Omega)} 
	    \gtrsim \norm{\Big( \int_{B(\,\cdot\,,R)\cap \Omega} \abs{f(y)}^{v} \d y \Big)^{\frac{1}{v}} \sep \mathcal{M}^{u}_{p}(\Omega)} 
        + \norm{ f \sep \mathcal{M}^{u}_{p}(\Omega)}
        + \abs{f}^{(T,v,N)}_{\osc,\Omega}
        + \abs{f}^{(T,v,N)}_{\Delta,\Omega} .
	$$
\end{prop}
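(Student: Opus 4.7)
The plan is to exploit the extension-based definition of $\mathcal{N}^{s}_{u,p,q}(\Omega)$ (see \autoref{defi:space_domain}) and reduce every inequality to its counterpart on~$\R$, where Theorems \ref{thm_osc_Rda=2} and \ref{thm_diff_Rda=2} already provide the needed lower bounds. More precisely, for $f\in \mathcal{N}^{s}_{u,p,q}(\Omega)$ fix an arbitrary extension $g\in \mathcal{N}^{s}_{u,p,q}(\R)$ with $g|_{\Omega}=f$. Theorems \ref{thm_osc_Rda=2} and \ref{thm_diff_Rda=2} guarantee that $g\in L^\loc_{\max\{1,p,v\}}(\R)$, which immediately yields $f \in L^\loc_{\max\{1,p,v\}}(\Omega)$, and they provide the bounds
\begin{align*}
    \norm{g \sep \mathcal{N}^{s}_{u,p,q}(\R)}
    \gtrsim \norm{ g \sep \mathcal{M}^{u}_{p}(\R)}
    + \norm{\Big( \int_{B(\,\cdot\,,R)} \abs{g(y)}^{v} \d y \Big)^{\frac{1}{v}} \sep \mathcal{M}^{u}_{p}(\R)}
    + \abs{g}^{(T,v,N)}_{\osc} + \abs{g}^{(T,v,N)}_{\Delta}
\end{align*}
with constants independent of $g$.

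Next I would compare each of these four quantities on~$\R$ with the corresponding one on~$\Omega$. The first two are trivial: since $g|_\Omega=f$ and $B(y,r)\cap\Omega\subseteq B(y,r)$, restricting the supremum defining $\norm{\,\cdot \sep \mathcal{M}^u_p(\R)}$ to centers $y\in\Omega$ and shrinking the ball of integration to $B(y,r)\cap\Omega$ only decreases the value, producing $\norm{f \sep \mathcal{M}^u_p(\Omega)}$ and its ball-averaged counterpart respectively (using \autoref{def:morrey_mod}). For the oscillation term, observe that any polynomial $\pi\in\mathcal{P}_{N-1}$ admissible for $g$ on $B(x,t)$ is also admissible for $f$ on $B(x,t)\cap\Omega$, so $\osc_{v,\Omega}^{N-1}f(x,t)\leq \osc_{v}^{N-1}g(x,t)$ pointwise for $x\in\Omega$, and the Morrey-norm comparison above then gives $\abs{f}^{(T,v,N)}_{\osc,\Omega} \lesssim \abs{g}^{(T,v,N)}_\osc$.

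The difference term requires the observation that for $x\in\Omega$ and $h\in V^{N}(x,t)$ every point $x+\ell h$ lies in~$\Omega$, so $\Delta_h^N f(x)=\Delta_h^N g(x)$; combined with $V^{N}(x,t)\subseteq B(0,t)$ this yields
\begin{align*}
    \Big( t^{-d}\int_{V^{N}(x,t)} \abs{\Delta^{N}_{h}f(x)}^v \d h \Big)^{\frac{1}{v}}
    \leq \Big( t^{-d}\int_{\abs{h}<t} \abs{\Delta^{N}_{h}g(x)}^v \d h \Big)^{\frac{1}{v}}
\end{align*}
and hence $\abs{f}^{(T,v,N)}_{\Delta,\Omega}\lesssim \abs{g}^{(T,v,N)}_\Delta$ by the same Morrey comparison. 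Finally, passing to the infimum over all admissible extensions $g$ and invoking \autoref{defi:space_domain} concludes the proof. No serious obstacle is expected here; the entire argument rests on the monotonicity of the relevant functionals when balls are replaced by their intersection with $\Omega$ and on the fact that higher-order differences involving only points inside~$\Omega$ do not depend on the extension.
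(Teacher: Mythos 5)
Your proposal is correct and follows essentially the same route as the paper: extend $f$ to $g\in\mathcal{N}^{s}_{u,p,q}(\R)$, invoke Theorems~\ref{thm_osc_Rda=2} and~\ref{thm_diff_Rda=2} to identify $g$ as a function in $L^{\loc}_{\max\{1,p,v\}}(\R)$ and to lower-bound $\norm{g\sep\mathcal{N}^s_{u,p,q}(\R)}$, and then use the pointwise monotonicity of the Morrey-, oscillation-, and difference-functionals under restriction to $\Omega$ (including $\Delta^N_h f(x)=\Delta^N_h g(x)$ for $h\in V^N(x,t)$). The paper's proof works with a near-optimal extension satisfying $\norm{f\sep\mathcal{N}^s_{u,p,q}(\Omega)}\geq\frac12\norm{F\sep\mathcal{N}^s_{u,p,q}(\R)}$ while you take the infimum over all extensions at the end, which is an equivalent formulation of the same step.
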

\begin{proof}
    Let $f\in \mathcal{N}^{s}_{u,p,q}(\Omega)$. 
    Then by \autoref{defi:space_domain} there exists $F\in \mathcal{N}^{s}_{u,p,q}(\R)$ such that $F\vert_\Omega = f$ on~$\Omega$ in $\mathcal{D}'(\Omega)$ and
    $$
        \norm{f\sep \mathcal{N}^{s}_{u,p,q}(\Omega)} 
        \geq \frac{1}{2} \norm{F\sep \mathcal{N}^{s}_{u,p,q}(\R)}.
    $$
    Using \autoref{thm_osc_Rda=2} we can conclude that this extension satisfies $F\in L_{\max\{1,p,v\}}^\loc(\R)$ and $\norm{F\sep \mathcal{N}^{s}_{u,p,q}(\R)} \sim \norm{F\sep \mathcal{N}^{s}_{u,p,q}(\R)}^{(R,T,v,N)}_\osc$. 
    Therefore, $F|_\Omega \in L_{\max\{1,p,v\}}^\loc(\Omega)$ equals $f$ pointwise a.e.\ in $\Omega$ and
    \begin{align*}
        \norm{\Big( \int_{B(\,\cdot\,,R)} \abs{F(y)}^{v} \d y \Big)^{\frac{1}{v}} \sep \mathcal{M}^{u}_{p}(\R)}
        &\geq \norm{\Big( \int_{B(\,\cdot\,,R)\cap \Omega} \Big| F|_\Omega(y) \Big|^{v} \d y \Big)^{\frac{1}{v}} \sep \mathcal{M}^{u}_{p}(\R)} \\
        &\gtrsim \norm{\Big( \int_{B(\,\cdot\,,R)\cap \Omega} \abs{f(y)}^{v} \d y \Big)^{\frac{1}{v}} \sep \mathcal{M}^{u}_{p}(\Omega)},
    \end{align*}
    whereby we have used \cite[Lemma~3(ii)]{HoWe23}.
    Further note that for every $x\in\Omega$ and $t>0$
    \begin{align}\label{eq:proof_osc_restriction}
        \osc_{v}^{N-1} F(x,t) \geq \osc_{v,\Omega}^{N-1} [F|_\Omega](x,t) = \osc_{v,\Omega}^{N-1} f(x,t)
    \end{align}
    such that
    $$
        \norm{ \osc_{v}^{N-1} F(\cdot,t) \sep \mathcal{M}^{u}_{p}(\R)}
        \gtrsim \norm{ \osc_{v}^{N-1} F(\cdot,t) \sep \mathcal{M}^{u}_{p}(\Omega)}
        \geq \norm{ \osc_{v,\Omega}^{N-1} f(\cdot,t) \sep \mathcal{M}^{u}_{p}(\Omega)}
    $$
    and hence also
    $$
        \abs{F}^{(T,v,N)}_{\osc} = \bigg( \int_{0}^{T} t^{-sq} \norm{ \osc_{v}^{N-1} F(\cdot,t) \sep \mathcal{M}^{u}_{p}(\R)}^{q} \frac{\d t}{t} \bigg)^{\frac{1}{q}} \gtrsim \abs{f}^{(T,v,N)}_{\osc,\Omega}.
    $$
    Together this yields $\norm{f\sep \mathcal{N}^{s}_{u,p,q}(\Omega)} \gtrsim \norm{F\sep \mathcal{N}^{s}_{u,p,q}(\R)}^{(R,T,v,N)}_\osc \gtrsim \norm{f\sep \mathcal{N}^{s}_{u,p,q}(\Omega)}^{(R,T,v,N)}_\osc$.
    Similarly we obtain $\norm{f\sep \mathcal{N}^{s}_{u,p,q}(\Omega)} \gtrsim \norm{F\sep \mathcal{N}^{s}_{u,p,q}(\R)}^{(T,v,N)}_\osc \gtrsim \norm{f\sep \mathcal{N}^{s}_{u,p,q}(\Omega)}^{(T,v,N)}_\osc$ since in this case the main term satisfies
    $$
        \norm{F \sep \mathcal{M}^{u}_{p}(\R)} \geq \norm{F|_\Omega \sep \mathcal{M}^{u}_{p}(\Omega)} = \norm{f \sep \mathcal{M}^{u}_{p}(\Omega)}.
    $$
    
    Finally, exactly the same arguments can be used for ball means of differences instead of oscillations. We only have to replace \eqref{eq:proof_osc_restriction} by
    \begin{align}
        \Big( t^{-d} \int_{B(x,t)} \abs{\Delta^{N}_{h} F(x)}^v \d h \Big)^{\frac{1}{v}}
        &\geq \Big( t^{-d} \int_{V^{N}(x,t)} \abs{\Delta^{N}_{h}[F|_\Omega](x)}^v \d h \Big)^{\frac{1}{v}} \nonumber\\
        &= \Big( t^{-d} \int_{V^{N}(x,t)} \abs{\Delta^{N}_{h}f(x)}^v \d h \Big)^{\frac{1}{v}}, \qquad x\in\Omega, t>0,\label{eq:proof_Delta_restriction}
    \end{align}
    and employ \autoref{thm_diff_Rda=2} in place of \autoref{thm_osc_Rda=2}. 
\end{proof}

For the Besov-type spaces the corresponding result follows from Theorems \ref{thm_osc_B_Rda=2} and \ref{thm_diff_B_Rda=2}.

\begin{prop}[Lower Bound in Besov-Type Spaces on Domains]\label{prop:B_Omega_lower}
    For $d\in\N$ let $\Omega\subsetneq\R$ be any domain.
    Further, let $0< p < \infty$, $0\leq\tau<\frac{1}{p}$, $0 < q,T,v \leq \infty$, $0<R<\infty$, $N \in \mathbb{N}$, and $s\in\mathbb{R}$ be such that
	\begin{align*}
		d\, \max\! \left\{ 0, \frac{1}{p} - 1, \frac{1}{p} - \frac{1}{v} \right\} < s < N.
	\end{align*}
	Then for $f\in B^{s,\tau}_{p,q}(\Omega)$ there holds $f \in L_{\max\{1,p,v\}}^\loc(\Omega)$ as well as
	\begin{align*}
	    \norm{f\sep B^{s,\tau}_{p,q}(\Omega)} 
	    &\gtrsim \sup_{P\in\mathcal{Q}} \frac{1}{\abs{P}^\tau} \norm{\Big( \int_{B(\,\cdot\,,R)\cap \Omega} \abs{f(y)}^{v} \d y \Big)^{\frac{1}{v}} \sep L_{p}(P \cap \Omega)} \\
        &\qquad \qquad + \sup_{P\in\mathcal{Q}} \frac{1}{\abs{P}^\tau} \norm{ f \sep L_{p}(P\cap \Omega)}
        + \abs{f}^{(T,v,N)}_{\osc,\tau,\Omega}
        + \abs{f}^{(T,v,N)}_{\Delta,\tau,\Omega} .
	\end{align*}
\end{prop}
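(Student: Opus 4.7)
The plan is to imitate the proof of \autoref{prop:N_Omega_lower} verbatim, replacing the use of \autoref{thm_osc_Rda=2} and \autoref{thm_diff_Rda=2} by their Besov-type counterparts \autoref{thm_osc_B_Rda=2} and \autoref{thm_diff_B_Rda=2}. Given $f\in B^{s,\tau}_{p,q}(\Omega)$, by \autoref{defi:space_domain} I would pick an extension $F\in B^{s,\tau}_{p,q}(\R)$ with $F|_\Omega = f$ and $\norm{F\sep B^{s,\tau}_{p,q}(\R)} \leq 2\norm{f\sep B^{s,\tau}_{p,q}(\Omega)}$. Since the parameters satisfy the hypothesis of \autoref{thm_osc_B_Rda=2}, the extension lies in $L^{\loc}_{\max\{1,p,v\}}(\R)$, and hence its restriction $f = F|_\Omega$ lies in $L^{\loc}_{\max\{1,p,v\}}(\Omega)$.

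Next, I would show that each of the four quantities on the right-hand side, evaluated for~$f$ on $\Omega$, is dominated by the corresponding $\R$-quantity for $F$. For the averaged and plain main terms this rests on the two elementary observations $B(x,R)\cap\Omega \subseteq B(x,R)$ and $\norm{F|_\Omega \sep L_p(P\cap \Omega)} \leq \norm{F\sep L_p(P)}$ for every $P\in\mathcal{Q}$. Taking the supremum against $|P|^{-\tau}$ yields
\begin{align*}
    \sup_{P\in\mathcal{Q}} \frac{1}{\abs{P}^\tau} \norm{\Big( \int_{B(\,\cdot\,,R)\cap \Omega} \abs{f(y)}^{v} \d y \Big)^{\frac{1}{v}} \sep L_{p}(P \cap \Omega)}
    &\leq \sup_{P\in\mathcal{Q}} \frac{1}{\abs{P}^\tau} \norm{\Big( \int_{B(\,\cdot\,,R)} \abs{F(y)}^{v} \d y \Big)^{\frac{1}{v}} \sep L_p(P)},\\
    \sup_{P\in\mathcal{Q}} \frac{1}{\abs{P}^\tau} \norm{ f \sep L_{p}(P\cap \Omega)}
    &\leq \sup_{P\in\mathcal{Q}} \frac{1}{\abs{P}^\tau} \norm{F \sep L_p(P)}.
\end{align*}
For the oscillation and difference seminorms the decisive pointwise inequalities are $\osc_{v,\Omega}^{N-1} f(x,t) \leq \osc_{v}^{N-1} F(x,t)$ (the infimum over $\mathcal{P}_{N-1}$ is taken against a smaller $v$-integral since $B(x,t)\cap\Omega\subseteq B(x,t)$), cf.~\eqref{eq:proof_osc_restriction}, and the analogue for ball means of differences, cf.~\eqref{eq:proof_Delta_restriction}, which follows from $V^N(x,t)\subseteq\{h\,:\,|h|<t\}$ together with $\Delta^N_h f(x)=\Delta^N_h F(x)$ whenever $x+\ell h\in\Omega$ for all $0\leq\ell\leq N$. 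Applying $\norm{\cdot \sep L_p(P\cap\Omega)}\leq\norm{\cdot \sep L_p(P)}$ and passing to the $L_q$-integral in $t$ as well as to the supremum over $P\in\mathcal{Q}$ then gives $\abs{f}^{(T,v,N)}_{\osc,\tau,\Omega}\lesssim\abs{F}^{(T,v,N)}_{\osc,\tau}$ and $\abs{f}^{(T,v,N)}_{\Delta,\tau,\Omega}\lesssim\abs{F}^{(T,v,N)}_{\Delta,\tau}$.

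Combining all four estimates, using \autoref{thm_osc_B_Rda=2} and \autoref{thm_diff_B_Rda=2} which both assert equivalences $\norm{F\sep B^{s,\tau}_{p,q}(\R)}^{(R,T,v,N)}_\osc \sim \norm{F\sep B^{s,\tau}_{p,q}(\R)}^{(T,v,N)}_\osc \sim \norm{F\sep B^{s,\tau}_{p,q}(\R)}^{(R,T,v,N)}_\Delta \sim \norm{F\sep B^{s,\tau}_{p,q}(\R)}$, and recalling that $\norm{F\sep B^{s,\tau}_{p,q}(\R)}\leq 2\norm{f\sep B^{s,\tau}_{p,q}(\Omega)}$ finishes the proof. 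No serious obstacle is to be expected: the argument is a direct transcription of \autoref{prop:N_Omega_lower}, and in contrast to that case there is not even any need to pass through Morrey quasi-norms (via \autoref{lem:discrete_morrey}) since all required restriction estimates act already at the level of $\norm{\cdot \sep L_p(P)}$.
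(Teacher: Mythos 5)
Your proposal is correct and follows essentially the same route as the paper's own argument: pick a near-optimal extension $F$ of $f$, invoke Theorems~\ref{thm_osc_B_Rda=2} and \ref{thm_diff_B_Rda=2} for $F$ on $\R$, and then pass to $\Omega$ through the elementary restriction inequalities $B(x,R)\cap\Omega\subseteq B(x,R)$, $\norm{\cdot\sep L_p(P\cap\Omega)}\leq\norm{\cdot\sep L_p(P)}$, together with \eqref{eq:proof_osc_restriction} and \eqref{eq:proof_Delta_restriction}. Your closing remark that the Besov-type case avoids the detour through Morrey quasi-norms (via \autoref{lem:discrete_morrey}) is also accurate and matches the paper's actual proof.
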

\begin{proof}
    Again for every $f\in B^{s,\tau}_{p,q}(\Omega)$ we find an extension $F\in B^{s,\tau}_{p,q}(\R) \subset L^\loc_{\max\{1,p,v\}}(\R)$ with
    $$
        \norm{f\sep B^{s,\tau}_{p,q}(\Omega)} 
        \geq \frac{1}{2} \norm{F\sep B^{s,\tau}_{p,q}(\R)}.
    $$
    Hence, like in the proof of \autoref{prop:N_Omega_lower}, it suffices to lower bound all components of the quasi-norms of $F$ given in Theorems \ref{thm_osc_B_Rda=2} and \ref{thm_diff_B_Rda=2} by the corresponding expressions of~$f$ on~$\Omega$. 
    For each $P\in\mathcal{Q}$ Formula \eqref{eq:proof_osc_restriction} yields
    $$
        \norm{ \osc_{v}^{N-1} F(\cdot,t) \sep L_{p}(P)}
        \gtrsim \norm{ \osc_{v}^{N-1} F(\cdot,t) \sep L_{p}(P\cap \Omega)}
        \geq \norm{ \osc_{v,\Omega}^{N-1} f(\cdot,t) \sep L_{p}(P\cap \Omega)}
    $$
    which proves $\abs{F}^{(T,v,N)}_{\osc,\tau} \gtrsim \abs{f}^{(T,v,N)}_{\osc,\tau,\Omega}$.
    Likewise \eqref{eq:proof_Delta_restriction} shows $\abs{F}^{(T,v,N)}_{\Delta,\tau} \gtrsim \abs{f}^{(T,v,N)}_{\Delta,\tau,\Omega}$.
    Moreover, for the main terms the claim is obvious since for each $P\in\mathcal{Q}$ we have
    \begin{align*}
        \Big( \int_{B(x,R)} \abs{F(y)}^{v} \d y \Big)^{\frac{1}{v}}
        &\geq \Big( \int_{B(x,R)\cap \Omega} \Big| F|_\Omega(y) \Big|^{v} \d y \Big)^{\frac{1}{v}}
        = \Big( \int_{B(x,R)\cap \Omega} \abs{f(y)}^{v} \d y \Big)^{\frac{1}{v}}, \quad x\in P,
    \end{align*}
    as well as $\norm{F \sep L_p(P)} \geq \norm{F \sep L_p(P\cap\Omega)} = \norm{f \sep L_p(P\cap\Omega)}$.
\end{proof}

\subsection{Differences on Special Lipschitz Domains}\label{Subsec_Diff1_splido}

Now let us turn to the corresponding upper estimates. For that purpose we will start with differences on special Lipschitz domains.
On the one hand, the resulting characterization is of interest on its own sake. On the other hand, it will also provide a technical tool which is used to derive the assertions concerning oscillations later on. 

\begin{prop}\label{prop:diff_special_Nsupq}
    For $d\in\N$ let $ \Omega \subset \R$ be a special Lipschitz domain. Moreover, let $0< p \leq u < \infty$, $0 < q,T \leq \infty$, $1 \leq v \leq \infty$, $0<R<\infty$, $ N \in \N$, and $s>0$.
    Then for $f \in L^\loc_{\max\{p,v\}}(\Omega)$ there holds
    \begin{align*}
        \norm{ f \sep \mathcal{N}^{s}_{u,p,q}(\Omega) } 
        & \lesssim  \norm{ \bigg(\int_{B(\,\cdot\,, R) \cap \Omega}   \abs{f(y)}^v \d y\bigg)^{\frac{1}{v}} \sep  \mathcal{M}^{u}_{p}( \Omega)} + \abs{f}^{(T,v,N)}_{\Delta,\Omega} . 
    \end{align*}
    If additionally $p\geq 1$, then also $\norm{ f \sep \mathcal{N}^{s}_{u,p,q}(\Omega) } \lesssim  \norm{ f \sep  \mathcal{M}^{u}_{p}( \Omega)} + \abs{f}^{(T,v,N)}_{\Delta,\Omega} $.
    In both cases the implied constants are independent of $f$.
\end{prop}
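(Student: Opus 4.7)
The plan is to reduce the proposition to the whole-space characterization established in \autoref{thm_diff_Rda=2}: by \autoref{defi:space_domain}, it suffices to produce a concrete extension $F \in \mathcal{N}^s_{u,p,q}(\R)$ of $f$ whose $\R$-quasi-norm $\norm{F\sep \mathcal{N}^s_{u,p,q}(\R)}^{(R,T,v,N)}_\Delta$ is dominated by the right-hand side of the claim.

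The crucial geometric input is an inner cone property. Writing $\Omega = \{(x',x_d)\in\R : x_d > \omega(x')\}$ with $\omega$ Lipschitz of constant $c$, the open cone $K := \{h\in\R : h_d > c'\abs{h'}\}$ with any $c' > c$ satisfies $\Omega + K \subseteq \Omega$. Since $K$ is closed under nonnegative scaling, this yields
\begin{align*}
    K \cap B(0,t) \subseteq V^N(x,t) \qquad \text{for every } x\in\Omega \text{ and every } t>0,
\end{align*}
with $\abs{K\cap B(0,t)} \sim t^d$ uniformly in $t$. This geometric fact is what converts unrestricted integrals $\int_{\abs{h}<t}$ appearing on the $\R$-side into integrals over $V^N(\,\cdot\,,t)$ on the $\Omega$-side. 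I would then take $F := Ef$ for a Stein/Rychkov-type reflection extension adapted to $\Omega$, of the form $Ef(x) = \sum_{j=1}^M \lambda_j\, f(\rho_j(x))$ for $x\notin\overline{\Omega}$, where the $\rho_j$ are affine reflections of $x$ into $\Omega$ along the distinguished direction $e_d$ and the coefficients $\lambda_j$ are chosen so that $E$ reproduces all polynomials of total degree less than $N$ and acts boundedly on local $L_{\max\{p,v\}}$-averages.

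Two properties of this extension have to be checked. First, the Morrey main term of $F$ on $\R$ is controlled by that of $f$ on $\Omega$ via a change of variables in each reflected contribution together with a rescaling argument from \cite[Lemma~4]{HoWe23}; under the additional assumption $p\geq 1$, a maximal-function argument based on \cite[Lemma~3(vii)]{HoWe23} (and its domain analogue) further allows replacement of the ball-average quantity by $\norm{f\sep\mathcal{M}^u_p(\Omega)}$. Second, and more importantly, for each $t>0$ one needs the bound
\begin{align*}
    \Big\| \Big( t^{-d}\!\int_{\abs{h}<t} \abs{\Delta_h^N F(\cdot)}^v \d h \Big)^{\frac{1}{v}} \!\sep \mathcal{M}^u_p(\R) \Big\|  \ls \Big\| \Big( t^{-d}\!\int_{V^N(\,\cdot\,,c_1 t)}\! \abs{\Delta_h^N f(\cdot)}^v \d h \Big)^{\frac{1}{v}}\! \sep \mathcal{M}^u_p(\Omega) \Big\| + \mathcal{R}(t)
\end{align*}
for some constant $c_1>0$, where the remainder $\mathcal{R}(t)$ has a negligible $t^{-sq}$-weighted $L_q(0,T)$-contribution that is absorbed by the main term. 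The key observation here is that $\Delta_h^N F(x) = \Delta_h^N f(x)$ whenever $h\in V^N(x,t)$, so one only has to treat the regime where either $x\notin\Omega$ or $h\notin V^N(x,t)$; in that regime the reflection formula together with the inner cone property rewrites $\Delta_h^N F(x)$ as a finite linear combination of differences $\Delta_{h'}^N f(y)$ with $y\in\Omega$ and $h'\in V^N(y,c_1 t)$ of comparable size.

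Plugging these two bounds into \autoref{thm_diff_Rda=2} (applied to $F$ on $\R$), together with the monotonicity of $\abs{\,\cdot\,}^{(T,v,N)}_{\Delta,\Omega}$ in $T$ and a mild rescaling of the step length via \cite[Lemma~4]{HoWe23}, I would conclude $\norm{f\sep\mathcal{N}^s_{u,p,q}(\Omega)} \leq \norm{F\sep\mathcal{N}^s_{u,p,q}(\R)} \ls (\text{main term}) + \abs{f}^{(T,v,N)}_{\Delta,\Omega}$, as claimed. The hard part will be the difference bound above: constructing the reflection extension so that higher-order $\R$-differences of $Ef$ are genuinely controlled by $\Omega$-differences of $f$ with admissible step vectors. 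This requires a careful combinatorial analysis of $\Delta_h^N(f\circ\rho_j)$ via discrete Leibniz-type identities for compositions with affine maps, and the inner cone property of $\Omega$ is precisely what ensures that the resulting step vectors land in $V^N(\,\cdot\,,c_1 t)$. The restriction $v\geq 1$ enters through integral Minkowski-type inequalities needed to sum the finitely many reflection contributions.
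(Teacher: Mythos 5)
Your proposal takes a genuinely different route from the paper, but it contains a gap that I do not believe can be repaired along the lines sketched. The paper avoids extension altogether: it invokes Yao's \emph{intrinsic} Littlewood--Paley characterization of $\mathcal{N}^s_{u,p,q}(\Omega)$ on special Lipschitz domains, where the Littlewood--Paley family $(\phi_j)_j$ is supported in a reflected narrow cone $-K$, so that $\phi_j\ast f(x)$ for $x\in\Omega$ only samples $f$ inside $\Omega$. It then specializes to Triebel's distinguished kernels $(k_j)_j$ and uses the representation \eqref{eq_kern_diff1_new} to rewrite $k_j\ast f$ directly as an integral of $N$-th order differences $\Delta^N_{2^{-j}y}f$ with $y\in B(0,1)\cap -K$, and the cone property of $\Omega$ guarantees precisely that these step vectors are admissible, i.e.\ lie in $V^N(x,2^{-j})$. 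No extension of $f$ beyond $\Omega$ ever enters the argument.

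The gap in your proposal concerns the existence and structure of the claimed extension. You postulate an operator of the form $Ef(x)=\sum_{j=1}^M\lambda_j\,f(\rho_j(x))$ with \emph{affine} reflections $\rho_j$. For a half-space such Hestenes-type reflections exist and indeed yield $\Delta^N_h(f\circ\rho_j)(x)=\Delta^N_{A_jh}f(\rho_j(x))$, which is what your ``discrete Leibniz identity'' step needs. But for a special Lipschitz domain $\Omega=\{x_d>\omega(x')\}$ with $\omega$ only Lipschitz (and not affine), the natural reflections are of the form $\rho_j(x',x_d)=(x',\,a_j\omega(x')+b_jx_d)$ and are \emph{not} affine in $x$: the identity $\rho_j(x+kh)=\rho_j(x)+kA_jh$ fails because $\omega(x'+kh')$ is not affine in $k$. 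Consequently $\Delta^N_h(f\circ\rho_j)(x)$ does \emph{not} reduce to a single $N$-th order difference of $f$ with a transformed step vector, and the ``finite linear combination of differences $\Delta^N_{h'}f(y)$ with $h'\in V^N(y,c_1 t)$'' you assert cannot be produced this way. (The genuine Stein and Rychkov extensions are also not of the finite-reflection form you write down: Stein's is an integral over a continuous family of translations weighted by a regularized distance function, and Rychkov's is an infinite sum of convolutions; neither makes the ``hard part'' any easier.) Since you yourself flag that difference bound as the crux and it is precisely the step that breaks for Lipschitz $\omega$, the proposal does not constitute a valid alternative proof as it stands. The inner-cone observation $K\cap B(0,t)\subseteq V^N(x,t)$ is correct and is, in fact, the same geometric fact the paper exploits — but the paper uses it on the kernel side inside the intrinsic convolution representation, not to transport differences across $\partial\Omega$ via reflections.
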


\begin{proof}
    W.l.o.g.\ we can assume that $T\leq 1$ and $q<\infty$.

 \emph{Step 1. }  In this first step, we will make use of so-called intrinsic Littlewood-Paley type characterizations for the spaces $\mathcal{N}^{s}_{u,p,q}(\Omega)$. 
 For that purpose, let us recall the definition of a Littlewood-Paley family associated to a special Lipschitz domain $\Omega$ from \cite{Yao}. A sequence $\phi = ( \phi_{j} )_{j = 0}^{\infty} \subset \mathcal{S}(\mathbb{R}^d)$ of Schwartz functions is called a Littlewood-Paley family associated with $\Omega$ if the following conditions are fulfilled:
    \begin{enumerate}
        \item For all multi-indices $\alpha \in \mathbb{N}_{0}^{d}$ we have $ \int_{\mathbb{R}^d} x^{\alpha} \,\phi_{1}(x) \d x = 0 $.

        \item For $j \in\N$ we have $\phi_{j} = 2^{(j-1)d}\, \phi_{1}(2^{j-1} \,\cdot\,)$.
    
        \item There holds $ \sum_{j = 0}^{\infty} \phi_{j} = \delta $ with convergence in $\mathcal{S}'(\mathbb{R}^d)$, whereby $\delta$ denotes the Dirac delta distribution.

        \item For all $j \in \mathbb{N}_{0}$ we have
        \begin{align*}
            \supp (\phi_{j}) \subset \left\{ x = (x',x_{d}) \in \mathbb{R}^{(d-1)+1} \,:\, x_{d} < - \norm{ \abs{\nabla \omega} \sep L_{\infty}(\mathbb{R}^{d-1}) } \cdot \abs{x'} \right\}=:-K.
        \end{align*}
        Notice that $-K$ can be interpreted as reflected, narrow, vertically directed cone; see also \cite[Section~2]{Ry99} and \cite[Section~4]{ZHS}. 
    \end{enumerate}
    Now let $\phi = ( \phi_{j} )_{j = 0}^{\infty} \subset \mathcal{S}(\mathbb{R}^d)$ be a Littlewood-Paley family associated with the special Lipschitz domain $\Omega \subset \mathbb{R}^d$. 
    Further let $0 < p \leq u < \infty$, $0 < q \leq \infty$, and $s \in \mathbb{R}$. 
    Then \cite[Theorem~1]{Yao} shows that there exists a constant $C_{1} >0$ independent of $f \in \mathcal{D}'(\Omega)$ such that
    \begin{align}\label{eq_LPTC1_new}
        \norm{ f \sep \mathcal{N}^{s}_{u,p,q}(\Omega)} 
        \leq  C_{1}  \bigg( \sum_{j = 0}^{\infty}  2^{jsq} \norm{  \phi_{j} \ast f  \sep   \mathcal{M}^{u}_{p}(\Omega)  }^{q}  \bigg)^{\frac{1}{q}} ,
    \end{align}
    where as usual $\ast$ denotes the convolution.

    \emph{Step 2. } To continue the proof we choose a particular Littlewood-Paley family. 
    For that purpose, we closely follow the proof of \cite[Proposition 2]{HoWe23} and employ some ideas of Triebel as given in Step~3 of his proof of \cite[Theorem~1.118]{Tr06}. 
    That is, we work with the distinguished kernels constructed in \cite[Section~3.3]{Tr92} and estimate them from above as described in \cite[Formula (1.392)]{Tr06}.
    Let us briefly recall some necessary notations. 
    Fix $N \in \mathbb{N}$ and let $\varphi \in C^{\infty}_{0}(\mathbb{R})$ as well as $\psi \in C^{\infty}_{0}(\mathbb{R})$ be such that 
    \begin{align*}
        \int_{\mathbb{R}} \varphi(x) \d x = 1 
        \qquad \mbox{and} \qquad 
        \varphi(x) - \frac{1}{2}\, \varphi\!\left(\frac{x}{2}\right) = \psi^{(N)}(x), \qquad x\in\mathbb{R}.
    \end{align*}
    Based on that we define $\Phi \in C^{\infty}_{0}(\mathbb{R}^d)$ by setting
    \begin{align*}
        \Phi(x) := \prod_{j=1}^{d} \varphi(x_{j}), \qquad x=(x_{1} , \ldots , x_{d})  \in \mathbb{R}^d.
    \end{align*}
    Using this function, for $x\in\R$ we put
    \begin{align*}
        k_{0}(x) := \frac{(-1)^{N+1}}{N!} \sum_{r=1}^{N} \sum_{m=1}^{N} (-1)^{r+m} \binom{N}{r} \binom{N}{m}\,  \frac{m^{N-d}}{r^d}\, \Phi\!\left(\frac{x}{rm}\right),
    \end{align*}
    $ k(x) := k_{0}(x) - 2^{-d} k_{0}(\frac{x}{2} ) $ as well as $ k_{j}(x) := 2^{jd} k(2^{j}x) $ for $ j\in\N$. 
    Note that $\varphi$ and $\psi$ can be chosen in a way such that $  \supp (k_{j}) \subset B(0, 2^{-j} N) \cap -K $, whereby (depending on $\Omega$) perhaps a rotation of either $\Phi$ or $\Omega$ becomes necessary. 
    For details let us refer to Step 2 of the proof of \cite[Proposition 2]{HoWe23}. 
    To continue, for $j \in \mathbb{N}_{0}$ and $f \in L^\loc_{\max\{p,v\}}(\Omega)$ we define
    \begin{equation}\label{eq_def_fj_late}
        f_{j}(x) 
        := (k_{j} \ast f)(x) = \int_{B(x,2^{-j} N) \,\cap\, x + K} k_{j}(x-z) f(z) \d z .
    \end{equation} 
    In \cite[Formula (10) on p.175]{Tr92} it was observed that for $j \in \mathbb{N}$ we can write
    \begin{align}\label{eq_kern_diff1_new}
        f_{j}(x) = \sum_{| \alpha | = N} \int_{\mathbb{R}^d} D^{\alpha} k_{\alpha}(-y) \, \Delta^{N}_{2^{-j}y} f(x) \d y,
    \end{align}
    with appropriate $k_{\alpha} \in C^{\infty}_{0}(\mathbb{R}^d)$.  
    Recall that $( k_{j} )_{j = 0}^{\infty}$ can be interpreted as a Littlewood-Paley family; cf.\ \cite[Proposition 2]{HoWe23}. 
    Using these $ f_{j} $ Formula~\eqref{eq_LPTC1_new} becomes
    \begin{equation}\label{eq_LPTC2_new}
     \norm{ f \sep \mathcal{N}^{s}_{u,p,q}(\Omega)} 
        \lesssim    \bigg ( \sum_{j = 0}^{\infty}  2^{jsq} \norm{ f_{j}  \sep \mathcal{M}^{u}_{p}(\Omega) }^{q} \bigg )^{\frac{1}{q}} .   
    \end{equation}

    \emph{Step 3. } 
    Next we prove that the functions $\abs{f_{j}}$ can be estimated from above by integrals of higher order differences. This observation already has been made in \cite[Formula~(1.392)]{Tr06}, see also \cite[Proposition 2]{HoWe23}.
    Therefore we only recall the main ideas and refer to \cite{HoWe23} for the details. 
    Let $J\in\N$ be such that $2^{-J} \leq T$ and consider $j\leq J$. For $x \in \Omega$ the definition of $k_{j}$ (given in Step 2 above) yields
    $$
        \abs{k_j(x)} \leq 2^{jd} \abs{k_0(2^j x)} + 2^{(j-1)d} \abs{k_0(2^{j-1} x)}. 
    $$
    Consequently for $j=0,\ldots,J$ we find $\abs{k_j(x)} \lesssim \sum_{\ell=0}^J \abs{k_0(2^{\ell} x)}$ with an implicit constant depending on $d$ and $J$.
    Hence, for $x\in\Omega$ using the definitions of $f_{j}$ and $k_{0}$ we get
    \begin{equation}\label{eq_domdif_j<J1}
        \abs{f_j(x)} \lesssim \int_{B(x, 2^{-j} N) \,\cap\, \Omega} \abs{k_{j}(x-y)} \abs{f(y)} \d y  \lesssim \int_{B(x, N) \,\cap\, \Omega} \abs{f(y)} \d y.
    \end{equation}
    
    Now let us turn to the case $j > J$. Here we want to use \eqref{eq_kern_diff1_new}. Recall that the functions $k_{\alpha} \in C^{\infty}_{0}(\mathbb{R}^d) $ are defined in terms of $\Phi$, see \cite[Formula (26) in Chapter 3.3.2]{Tr92}. 
    Hence they can be chosen such that $\supp k_{\alpha} \subset B(0,1)  \cap -K $. For $x \in \Omega$ the narrow vertically directed cone property of $\Omega$ yields
    \begin{align}
        \abs{f_{j}(x)} 
        &\leq \sum_{\abs{\alpha} = N} \int_{B(0,1) \,\cap\, \{ y\in\R \;:\; x + \ell 2^{-j}  y \in \Omega \; \text{ for } \; 0 \leq \ell \leq N  \}} \abs{D^{\alpha} k_{\alpha}(-y)} \abs{\Delta^{N}_{2^{-j}y} f(x)} \d y \nonumber\\
        & \lesssim 2^{jd} \int_{B(0,2^{-j}) \,\cap\, \{ h\in\R \;:\; x + \ell h \in \Omega \; \text{ for } \; 0 \leq \ell \leq N \}} \abs{\Delta^{N}_{h} f(x)} \d h.\label{eq_domdif_j>J2}
    \end{align}
    Again we refer to  \cite[Proposition 2]{HoWe23} for some more details. Formula \eqref{eq_domdif_j>J2} will be important for us later on. Let us remark that it can also be found in \cite[Formula (1.392)]{Tr06}. 

    \textit{Step 4. } We proceed and combine our previous findings to prove the claim for $v=1$. A combination of \eqref{eq_LPTC2_new} and \eqref{eq_domdif_j<J1} yields 
    \begin{align*}
      & \norm{ f \sep \mathcal{N}^{s}_{u,p,q}(\Omega)} \\
       & \qquad \lesssim  \bigg ( \sum_{j = 0}^{J}  2^{jsq} \norm{ f_{j} \sep \mathcal{M}^{u}_{p}(\Omega)  }^{q} \bigg)^{\frac{1}{q}} 
       + \bigg( \sum_{j = J+1}^{\infty}  2^{jsq} \norm{  f_{j}  \sep   \mathcal{M}^{u}_{p}(\Omega)  }^{q}     \bigg)^{\frac{1}{q}} \\
       & \qquad \lesssim  \bigg ( \sum_{j = 0}^{J}  2^{jsq} \norm{ \int_{B(\,\cdot\,, N) \cap \Omega} \abs{f(y)} \d y \sep   \mathcal{M}^{u}_{p}(\Omega) }^{q} \bigg)^{\frac{1}{q}} 
       + \bigg( \sum_{j = J+1}^{\infty}  2^{jsq} \norm{  f_{j}  \sep \mathcal{M}^{u}_{p}(\Omega) }^{q}     \bigg)^{\frac{1}{q}} .
    \end{align*}
    For the first term an application of \cite[Lemma 4]{HoWe23} leads to
    \begin{align*}
        \bigg ( \sum_{j = 0}^{J}  2^{jsq} \norm{ \int_{B(\,\cdot\,, N) \cap \Omega} \abs{f(y)} \d y \sep   \mathcal{M}^{u}_{p}(\Omega) }^{q} \bigg)^{\frac{1}{q}} 
        & \lesssim   \norm{ \int_{B(\,\cdot\,, R) \cap \Omega} \abs{f(y)} \d y \sep   \mathcal{M}^{u}_{p}(\Omega) }
    \end{align*}
    as $J=J(T)$ was chosen fixed.
    For the second term we can use \eqref{eq_domdif_j>J2} and $2^{-J} \leq T$ to get 
    \begin{align*}
        & \bigg( \sum_{j = J+1}^{\infty}  2^{jsq} \norm{  f_{j}  \sep \mathcal{M}^{u}_{p}(\Omega) }^{q}     \bigg)^{\frac{1}{q}} \\
        & \qquad \lesssim \bigg ( \sum_{j = J+1}^{\infty} 2^{jsq} \norm{ 2^{jd} \int_{B(0,2^{-j}) \,\cap\, \{ h\in\R \;:\; x + \ell h \in \Omega \; \text{ for } \; 0 \leq \ell \leq N \}} \abs{\Delta^{N}_{h} f(\cdot)} \d h \sep   \mathcal{M}^{u}_{p}(\Omega) }^{q}     \bigg )^{\frac{1}{q}} \\  
        & \qquad \lesssim \bigg ( \sum_{j = J+1}^{\infty} \int_{2^{-j}}^{2^{-j+1}}\, 2^{j}  2^{jsq} \norm{ 2^{jd} \int_{ V^{N}(\,\cdot\,,2^{-j}) } \abs{\Delta^{N}_{h} f(\cdot)} \d h \sep \mathcal{M}^{u}_{p}(\Omega) }^{q}  \d t \bigg)^{\frac{1}{q}} \\ 
        & \qquad \lesssim \bigg ( \int_{0}^{T} t^{-sq} \norm{ t^{-d} \int_{ V^{N}(\,\cdot\,,t) } \abs{\Delta^{N}_{h} f(\cdot)} \d h \sep   \mathcal{M}^{u}_{p}(\Omega) }^{q}  \frac{\d t}{t}   \bigg)^{\frac{1}{q}}
        =\abs{f}^{(T,1,N)}_{\Delta,\Omega}.  
    \end{align*}
    Consequently we have shown the claim for $v=1$, namely
    \begin{align*}
        \norm{ f \sep \mathcal{N}^{s}_{u,p,q}(\Omega)}  \lesssim \norm{ \int_{B(\,\cdot\,, R) \,\cap\, \Omega} \abs{f(y)} \d y \sep  \mathcal{M}^{u}_{p}(\Omega) }  + \abs{f}^{(T,1,N)}_{\Delta,\Omega}.
    \end{align*}

    \emph{Step 5. } 
    In order to complete the proof we can now use H\"older's inequality with $v\geq 1$ and $\abs{V^N(x,t)} \leq \abs{B(0,t)}\lesssim t^d$ to observe that  $\abs{f}^{(T,1,N)}_{\Delta,\Omega}\lesssim \abs{f}^{(T,v,N)}_{\Delta,\Omega}$ as well as
    \begin{align*}
        \norm{ \int_{B(\,\cdot\,, R) \,\cap\, \Omega} \abs{f(y)} \d y \sep  \mathcal{M}^{u}_{p}(\Omega) }
        \lesssim \norm{ \bigg( \int_{B(\,\cdot\,, R) \,\cap\, \Omega} \abs{f(y)}^{v} \d y  \bigg)^{\frac{1}{v}} \sep \mathcal{M}^{u}_{p}(\Omega)}.
    \end{align*}
    If in addition $ p \geq 1 $, we may instead employ \cite[Lemma 3]{HoWe23} to find
    \begin{align*}
        \norm{ \int_{B(\,\cdot\,, R) \,\cap\, \Omega} \abs{f(y)} \d y \sep  \mathcal{M}^{u}_{p}(\Omega) } 
        \lesssim \norm{ f \sep  \mathcal{M}^{u}_{p}( \Omega)}  .
    \end{align*}
    For the details we refer to Step 5 of the proof of \cite[Proposition 2]{HoWe23}.
\end{proof}

Likewise, we can prove a corresponding assertion for  Besov-type spaces $B^{s, \tau}_{p,q}(\Omega)$ defined on special Lipschitz domains $\Omega$.

\begin{prop}\label{prop:diff_special_Bstpq}
    For $d\in\N$ let $ \Omega \subset \R$ be a special Lipschitz domain. Moreover, let $0< p  < \infty$, $ 0 \leq \tau < \frac{1}{p} $, $0 < q,T \leq \infty$, $1 \leq v \leq \infty$, $0<R<\infty$, $ N \in \N$, and $s>0$.
    Then for $f \in L^\loc_{\max\{p,v\}}(\Omega)$ we have
    \begin{align*}
        \norm{ f \sep B^{s, \tau}_{p,q}(\Omega) } 
        & \lesssim \sup_{P\in\mathcal{Q}} \frac{1}{\abs{P}^\tau} \norm{\bigg( \int_{B(\,\cdot\,,R)\,\cap\, \Omega} \abs{f(y)}^{v} \d y \bigg)^{\frac{1}{v}} \sep L_{p}(P \cap \Omega)}  + \abs{f}^{(T,v,N)}_{\Delta, \tau , \Omega}
    \end{align*}
    with an implied constant independent of $f$.
    If additionally $p\geq 1$, then also
    \begin{align*}
    \norm{ f \sep B^{s, \tau}_{p,q}(\Omega) } \lesssim  \sup_{P\in\mathcal{Q}} \frac{1}{\abs{P}^\tau} \norm{ f \sep L_{p}(P\cap \Omega)} + \abs{f}^{(T,v,N)}_{\Delta, \tau ,  \Omega}. 
    \end{align*}
\end{prop}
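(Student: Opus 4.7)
The plan is to mimic the strategy of \autoref{prop:diff_special_Nsupq} almost verbatim, replacing the Besov-Morrey Littlewood-Paley characterization of \cite{Yao} by its Besov-type counterpart and the Morrey quasi-norm by the supremum over dyadic cubes of local $L_p$ norms. Concretely, I would first establish (or invoke from the literature; a suitable reference should be the Besov-type analogue of \cite[Theorem~1]{Yao}, which is available under a mild modification of the same proof since the only ingredient is a maximal function estimate on the appropriate space of sequences) an intrinsic Littlewood-Paley inequality of the form
\begin{align*}
    \norm{f \sep B^{s,\tau}_{p,q}(\Omega)}
    \lesssim \sup_{P\in\mathcal{Q}} \frac{1}{\abs{P}^\tau} \bigg( \sum_{j=0}^\infty 2^{jsq} \norm{k_j \ast f \sep L_p(P\cap\Omega)}^q \bigg)^{\frac{1}{q}}
\end{align*}
for the distinguished Triebel kernels $(k_j)_{j\in\N_0}$ from Step~2 of the proof of \autoref{prop:diff_special_Nsupq}, whose supports lie in the reflected cone $-K$ so that $f_j=k_j\ast f$ makes sense for $f\in L^\loc_{\max\{p,v\}}(\Omega)$.

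Once this is available, the rest of the argument is essentially a transcription. Fixing $J\in\N$ with $2^{-J}\leq T$, I would split the $j$-sum at $J$. For $j\leq J$ I would use the pointwise bound~\eqref{eq_domdif_j<J1}, namely $\abs{f_j(x)} \lesssim \int_{B(x,N)\cap\Omega} \abs{f(y)}\d y$, and absorb the (finite) geometric sum in $j$ to obtain a bound by $\sup_{P} \abs{P}^{-\tau} \norm{\int_{B(\,\cdot\,,N)\cap\Omega} \abs{f(y)}\d y \sep L_p(P\cap\Omega)}$, which by \cite[Lemma~4]{HoWe23} (combined with \autoref{lem:discrete_morrey} to transfer from Morrey to sup-over-cubes form, or a direct cube version of the same lemma) can be replaced by the analogous expression with radius $R$. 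For $j>J$ I would insert \eqref{eq_domdif_j>J2}, convert the discrete $j$-sum into the continuous integral over $t\in(0,T)$ via the standard sandwich $\int_{2^{-j}}^{2^{-j+1}} t^{-sq-1}\d t \sim 2^{jsq}$, and identify the result as $\abs{f}^{(T,1,N)}_{\Delta,\tau,\Omega}$. Taking the outer $\sup_{P}\abs{P}^{-\tau}$ commutes with each of these steps because all estimates are pointwise or linear in the relevant $L_p(P\cap\Omega)$-quantities.

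The final passage from $v=1$ to arbitrary $v\geq 1$ is identical to Step~5 of the previous proof: H\"older together with $\abs{V^N(x,t)}\lesssim t^d$ gives $\abs{f}^{(T,1,N)}_{\Delta,\tau,\Omega}\lesssim \abs{f}^{(T,v,N)}_{\Delta,\tau,\Omega}$ and $\int_{B(x,R)\cap\Omega}\abs{f(y)}\d y \lesssim R^{d(1-1/v)}(\int_{B(x,R)\cap\Omega}\abs{f(y)}^v\d y)^{1/v}$. When additionally $p\geq 1$, the $L_p(P\cap\Omega)$-norm of $\int_{B(\,\cdot\,,R)\cap\Omega}\abs{f(y)}\d y$ is controlled by $\norm{f\sep L_p(P^\ast\cap\Omega)}$ for a slightly enlarged cube $P^\ast\supset P$ via a translation/Minkowski argument, exactly as in the Morrey setting (cf.\ \cite[Lemma~3]{HoWe23}), and since $\abs{P^\ast}\sim\abs{P}$ the supremum over $P\in\mathcal{Q}$ absorbs the enlargement.

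The main obstacle I anticipate is making the intrinsic Littlewood-Paley characterization rigorous in the Besov-type setting on a special Lipschitz domain, since \cite{Yao} is phrased for Besov-Morrey spaces. However, the required estimate only uses a Peetre-type maximal function bound together with the molecular/atomic machinery of $B^{s,\tau}_{p,q}$, both of which are well developed in \cite{ysy}; a careful adaptation of Yao's argument with $\mathcal{M}^u_p$ replaced by the sup-over-cubes norm should go through without genuinely new ideas. Once this starting inequality is in hand, the remainder of the proof is structurally identical to that of \autoref{prop:diff_special_Nsupq}.
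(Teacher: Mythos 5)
Your proposal is correct and follows essentially the same route as the paper. The one concern you flag as a potential obstacle is in fact a non-issue: \cite[Theorem~1]{Yao} already covers the Besov-type scale $B^{s,\tau}_{p,q}(\Omega)$ on special Lipschitz domains (not only Besov-Morrey spaces), so the intrinsic Littlewood--Paley inequality can be quoted directly and no adaptation of Yao's argument is needed.
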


\begin{proof}
    The proof is quite similar to that of \autoref{prop:diff_special_Nsupq}. Therefore, we shall only indicate the necessary modifications.

    Again, we start with a Littlewood-Paley type characterization, this time for the spaces~$B^{s, \tau}_{p,q}(\Omega)$. In \cite[Theorem~1]{Yao} Yao showed that for any Littlewood-Paley family $\phi = ( \phi_{j} )_{j = 0}^{\infty} \subset \mathcal{S}(\mathbb{R}^d)$ associated to the special Lipschitz domain $\Omega$ as well as for $ 0 < p < \infty $, $ 0 \leq \tau < \frac{1}{p} $, $ 0 < q \leq \infty$, and $ s > 0 $, there exists a constant $C_{1} >0$ such that
    \begin{align*}
        \norm{ f \sep B^{s, \tau}_{p,q}(\Omega)} 
        \leq  C_{1} \sup_{P\in\mathcal{Q}} \frac{1}{\abs{P}^\tau}  \bigg( \sum_{j = 0}^{\infty}  2^{jsq} \norm{\phi_{j} \ast f  \sep  L_{p}(P \cap \Omega) }^{q} \bigg)^{\frac{1}{q}}, \qquad f \in \mathcal{D}'(\Omega).
    \end{align*}
    To continue, we once more work with the specific distinguished kernels $\phi:=( k_{j} )_{j = 0}^{\infty}$ due to Triebel~\cite{Tr92}; see the proof of \autoref{prop:diff_special_Nsupq}.
    Given $f\in L^\loc_{\max\{p,v\}}(\Omega)\subset \mathcal{D}'(\Omega)$, for $j\in\N_0$ we let $f_j:=k_j \ast f$, see \eqref{eq_def_fj_late} and \eqref{eq_kern_diff1_new}, such that the latter formula becomes
    \begin{align*}
        &\norm{ f \sep B^{s, \tau}_{p,q}(\Omega)} \\
        &\;\lesssim   \sup_{P\in\mathcal{Q}} \frac{1}{\abs{P}^\tau}  \bigg( \sum_{j = 0}^{J}  2^{jsq} \norm{f_{j} \sep  L_{p}(P \cap \Omega)  }^{q} \bigg)^{\frac{1}{q}} 
        +  \sup_{P\in\mathcal{Q}} \frac{1}{\abs{P}^\tau}   \bigg(  \sum_{j = J + 1}^{\infty} 2^{jsq} \norm{ f_{j} \sep L_{p}(P \cap \Omega) }^{q} \bigg)^{\frac{1}{q}}, 
    \end{align*}
    where $J\in \N$ is chosen such that $2^{-J}\leq T$.
    Therein, the first term can be estimated using~\eqref{eq_domdif_j<J1}, \cite[Lemma 4]{HoWe23} as well as \autoref{lem:discrete_morrey} which together yield that
    \begin{align*}
        \sup_{P\in\mathcal{Q}} \frac{1}{\abs{P}^\tau}  \bigg( \sum_{j = 0}^{J}  2^{jsq} \norm{f_{j} \sep  L_{p}(P \cap \Omega)  }^{q} \bigg)^{\frac{1}{q}} 
        \lesssim \sup_{P\in\mathcal{Q}} \frac{1}{\abs{P}^\tau} \norm{ \int_{B(\,\cdot\,, R) \,\cap\, \Omega} \abs{f(y)} \d y \sep  L_{p}(P \cap \Omega) }.
    \end{align*}
    For the second term we note that \eqref{eq_domdif_j>J2} and $2^{-J} \leq T$ imply
    \begin{align*}
        &\sum_{j = J + 1}^{\infty} 2^{jsq} \norm{ f_{j}   \sep L_{p}(P \cap \Omega)  }^{q} \\
        &\qquad \lesssim  \sum_{j = J + 1}^{\infty} 2^{jsq} \norm{ 2^{jd} \int_{B(0,2^{-j}) \,\cap\, \{ h\in\R \;:\; x + \ell h \in \Omega \; \text{ for } \; 0 \leq \ell \leq N \}} \abs{\Delta^{N}_{h} f(\cdot)} \d h  \sep L_{p}(P \cap \Omega) }^{q}    \\
        & \qquad \lesssim  \int_{0}^{T} t^{-sq} \norm{  t^{-d} \int_{ V^{N}(\,\cdot\,, t) } \abs{\Delta^{N}_{h} f(\cdot)} \d h  \sep L_{p}(P \cap \Omega) }^{q} \frac{\d t}{t},
    \end{align*}
    i.e.\
    \begin{align*}
        \sup_{P\in\mathcal{Q}} \frac{1}{\abs{P}^\tau}   \bigg(  \sum_{j = J + 1}^{\infty} 2^{jsq} \norm{ f_{j} \sep L_{p}(P \cap \Omega) }^{q} \bigg)^{\frac{1}{q}}
        \lesssim \abs{f}^{(T,1,N)}_{\Delta,\tau,\Omega}.
    \end{align*}
    Together this shows the claim for the special case $v=1$ and, following the lines of the proof of \autoref{prop:diff_special_Nsupq}, the general statement can be derived easily using H\"older's inequality.
    
    Furthermore, again for $p\geq 1$ the main term can be bounded with the help of \cite[Lemma 3]{HoWe23}, now combined with \autoref{lem:discrete_morrey}, which yields
    \begin{align*}
        \sup_{P\in\mathcal{Q}} \frac{1}{\abs{P}^\tau} \norm{ \int_{B(\,\cdot\,, R) \,\cap\, \Omega} \abs{f(y)} \d y \sep  L_{p}(P \cap \Omega) }
        \lesssim \sup_{P\in\mathcal{Q}} \frac{1}{\abs{P}^\tau}   \norm{ f \sep L_{p}(P \cap \Omega) }.
    \end{align*}
    Therefore, the proof is complete.
\end{proof}

\subsection{Oscillations on Special Lipschitz Domains}
Here we shall derive characterizations in terms of local oscillations for Besov-Morrey and Besov-type spaces defined on special Lipschitz domains. 
In view of Propositions~\ref{prop:N_Omega_lower} and~\ref{prop:B_Omega_lower}, it suffices to discuss upper bounds.
For that purpose, we first recall the following result on quasi-optimal polynomials.
\begin{lemma}[{\cite[Lemma 6]{HoWe23}}]
\label{lem:opt_pol}
    Let $d,N\in\N$ and $\Omega \subset \mathbb{R}^d$ be a special Lipschitz domain. 
    For $x\in\Omega$ and $t>0$ consider the Hilbert spaces $\mathcal{H}_{x,t}^{N-1} := \big( \mathcal{P}_{N-1}, \distr{\cdot}{\cdot}_{x,t}\big)$ with inner product
    $$
        \distr{f}{g}_{x,t} := t^{-d} \int_{B(x,t)\cap \Omega} f(z)\, \overline{g(z)} \d z.
    $$
    Further, for an appropriate index set $\mathcal{I}$ let $\{p_{i,x,t} \sep i\in \mathcal{I}\}$ be an orthonormal basis of $\mathcal{H}_{x,t}^{N-1}$ with $  \sup_{y\in B(x,t)\cap \Omega} \abs{p_{i,x,t}(y)} \leq C $ for some $C>0$ independent of $x$ and $t$. Then there exist constants $c_1=c_1(C^2,\#\mathcal{I})>0$ and $c_2=c_2(c_1, d)>0$ such that the projection operators
    $$
        \Pi_{x,t}^{N-1} \colon L_1(B(x,t)\cap \Omega) \to \mathcal{H}_{x,t}^{N-1}, \qquad f\mapsto \Pi_{x,t}^{N-1} f := \sum_{i\in \mathcal{I}} \distr{f}{p_{i,x,t}}_{x,t} \, p_{i,x,t},
    $$
    satisfy 
    \begin{enumerate}
        \item for all $y\in B(x,t)\cap \Omega $ the pointwise bound $\abs{(\Pi_{x,t}^{N-1}f )(y)} \leq c_1 \, t^{-d} \int_{B(x,t)\cap \Omega} \abs{f(z)} \d z$,

        \item for all $ \pi \in\mathcal{P}_{N-1}$ the equation $ \Pi_{x,t}^{N-1}[f- \pi ] = \Pi_{x,t}^{N-1}f- \pi  \in \mathcal{P}_{N-1}$,
        
        \item the quasi-optimality $  t^{-d} \int_{B(x,t)\cap \Omega} \abs{f(z) - (\Pi_{x,t}^{N-1}f )(z)} \d z 
            \leq c_2 \, \osc_{1,\Omega}^{N-1}f(x,t) $,
        
        \item for a.e.\ $x\in\Omega$ the limit property $\lim_{t\to 0}\limits \big(\Pi_{x,t}^{N-1}f\big)(x) = f(x)$.
    \end{enumerate}
\end{lemma}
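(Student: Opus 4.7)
My plan is to verify (i)--(iv) in order, in each case exploiting the Hilbert-space structure of $\ch_{x,t}^{N-1}$ together with the uniform bound $\sup_{y\in B(x,t)\cap\Omega}\abs{p_{i,x,t}(y)}\leq C$ and the fact that $\#\mathcal{I}=\dim\mathcal{P}_{N-1}$ is finite and depends only on $d$ and $N$. All four properties should reduce to bookkeeping once the defining expansion of $\Pi_{x,t}^{N-1}f$ is combined with projection linearity and Lebesgue differentiation; no deeper geometric input from $\Omega$ (beyond openness) is needed, because we take the orthonormal basis as a hypothesis.

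For (i), I would unfold the definition and apply the triangle inequality, estimating the inner products by $\abs{\distr{f}{p_{i,x,t}}_{x,t}}\leq C\, t^{-d}\int_{B(x,t)\cap\Omega}\abs{f(z)}\,\d z$ via $\abs{p_{i,x,t}}\leq C$, and then using $\abs{p_{i,x,t}(y)}\leq C$ a second time. Summing over $i\in\mathcal{I}$ yields the pointwise bound with $c_1:=C^2\cdot\#\mathcal{I}$. Property (ii) is immediate: $\Pi_{x,t}^{N-1}$ is the orthogonal projection onto $\mathcal{P}_{N-1}$ with respect to $\distr{\cdot}{\cdot}_{x,t}$, so $\Pi_{x,t}^{N-1}\pi=\pi$ for every $\pi\in\mathcal{P}_{N-1}$, and the claim follows by linearity.

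For (iii), I would fix an arbitrary $\pi\in\mathcal{P}_{N-1}$ and use (ii) to replace $f-\Pi_{x,t}^{N-1}f$ by $(f-\pi)-\Pi_{x,t}^{N-1}(f-\pi)$. Applying (i) pointwise to $f-\pi$ and integrating over $B(x,t)\cap\Omega$ gives
\[
t^{-d}\!\!\int_{B(x,t)\cap\Omega}\!\abs{f-\Pi_{x,t}^{N-1}f}\,\d z \;\leq\; \bigl(1+c_1\, t^{-d}\abs{B(x,t)\cap\Omega}\bigr)\, t^{-d}\!\!\int_{B(x,t)\cap\Omega}\!\abs{f-\pi}\,\d z.
\]
Since $\abs{B(x,t)\cap\Omega}\leq \abs{B(0,1)}\, t^{d}$, the prefactor is bounded by $c_2:=1+c_1\abs{B(0,1)}$, which depends only on $c_1$ and $d$. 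Taking the infimum over $\pi\in\mathcal{P}_{N-1}$ gives exactly (iii).

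Finally, for (iv) I would observe that the constant $f(x)$ lies in $\mathcal{P}_{N-1}$, so (ii) yields $\Pi_{x,t}^{N-1}f(x)-f(x)=\Pi_{x,t}^{N-1}[f-f(x)](x)$, which (i) bounds by $c_1\, t^{-d}\int_{B(x,t)\cap\Omega}\abs{f(z)-f(x)}\,\d z$. Since $\Omega$ is open, for any $x\in\Omega$ and $t>0$ small enough we have $B(x,t)\subset\Omega$, so this reduces to the ordinary ball average, which tends to $0$ as $t\to 0$ at every Lebesgue point of $f$ in $\Omega$, hence almost everywhere. The only real obstacle is ensuring that each constant is uniform in $x$ and $t$, but this is guaranteed once and for all by the hypothesis on the basis; everything else is routine.
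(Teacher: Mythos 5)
Your proof is correct and takes the standard route: (i) via the triangle inequality together with the uniform bound on the basis, (ii) from idempotence of the orthogonal projection plus linearity, (iii) by inserting an arbitrary $\pi\in\mathcal{P}_{N-1}$ through (ii), applying (i) pointwise to $f-\pi$, integrating, and using $\abs{B(x,t)\cap\Omega}\lesssim t^{d}$, and (iv) via the Lebesgue differentiation theorem after reducing to the constant polynomial $\pi:=f(x)$. The paper merely cites this result from \cite[Lemma~6]{HoWe23} without reproducing the argument, but your proof supplies exactly the expected verification, with the correct constant dependencies $c_1=C^2\cdot\#\mathcal{I}$ and $c_2=1+c_1\abs{B(0,1)}$.
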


Now we are well-prepared to prove the following result concerning the Besov-Morrey spaces $\mathcal{N}^{s}_{u,p,q}(\Omega) $.

\begin{prop}\label{prop:osc_special}
    For $d\in\N$ let $ \Omega \subset \R$ be a special Lipschitz domain, $0< p \leq u < \infty$, $0 < q,T \leq \infty$, $1 \leq v \leq \infty$, $0<R<\infty$, $ N \in \N$, and $s>0$.
    Then
    \begin{align*}
        \norm{ f \sep \mathcal{N}^{s}_{u,p,q}(\Omega) } 
        & \lesssim  \norm{ \bigg(\int_{B(\,\cdot\,, R) \cap \Omega}   \abs{f(y)}^v \d y\bigg)^{\frac{1}{v}} \sep  \mathcal{M}^{u}_{p}( \Omega)} + \abs{f}^{(T,v,N)}_{\osc,\Omega}, \qquad  f \in L^\loc_{\max\{p,v\}}(\Omega).
    \end{align*}
    If additionally $p\geq 1$, then also $\norm{ f \sep \mathcal{N}^{s}_{u,p,q}(\Omega) } \lesssim  \norm{ f \sep  \mathcal{M}^{u}_{p}( \Omega)} + \abs{f}^{(T,v,N)}_{\osc,\Omega}$.
    In both cases the implied constants are independent of $f$.
\end{prop}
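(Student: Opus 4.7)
My plan is to mirror the proof strategy of \autoref{prop:diff_special_Nsupq} almost verbatim, using Yao's intrinsic Littlewood-Paley characterization \eqref{eq_LPTC2_new} together with the distinguished Triebel kernels $(k_j)_{j=0}^\infty$ constructed there. The only modification needed is in the estimate of the convolutions $f_j = k_j * f$ at high frequencies ($j$ large), where now we will bound $|f_j(x)|$ directly by a local oscillation instead of going through mean differences. Once such a pointwise estimate is available, the remaining scaffolding of the proof (splitting the $j$-sum at some $J$ with $2^{-J}N \leq T$, converting discrete sums to continuous integrals in $t$, handling low-frequency terms via \eqref{eq_domdif_j<J1}, and upgrading the main term via \cite[Lemma 4]{HoWe23} and \cite[Lemma 3]{HoWe23} when $p\geq 1$) carries over word-for-word.

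The key new ingredient is the following vanishing-moment observation. From the explicit representation \eqref{eq_kern_diff1_new}, we see that $(k_j * \pi)(x) = 0$ for every $\pi\in\mathcal{P}_{N-1}$ and every $j\geq 1$, since $\Delta^N_{2^{-j}y}\pi \equiv 0$. Equivalently, $k_j$ has $N$ vanishing moments for $j\geq 1$. Combining this with the support property $\supp k_j \subset B(0,2^{-j}N)\cap -K$ and the cone inclusion $x + K \subset \Omega$ for $x\in\Omega$ yields, for any $\pi\in\mathcal{P}_{N-1}$,
\begin{align*}
|f_j(x)| = \Big|\!\int k_j(x-y)[f(y)-\pi(y)]\d y\Big| \leq \|k_j\|_\infty \int_{B(x,2^{-j}N)\cap \Omega}|f(y)-\pi(y)|\d y,
\end{align*}
and since $\|k_j\|_\infty \lesssim 2^{jd}$, the infimum over $\pi$ combined with H\"older's inequality (valid thanks to $v\geq 1$) gives the pointwise bound
\begin{align*}
|f_j(x)| \lesssim \osc_{1,\Omega}^{N-1} f(x, 2^{-j}N) \lesssim \osc_{v,\Omega}^{N-1} f(x, 2^{-j}N), \qquad x\in\Omega,\; j\geq 1.
\end{align*}

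From here the proof closes exactly as in Step 4 of \autoref{prop:diff_special_Nsupq}. Choose $J\in\N$ with $2^{-J}N\leq T$. For $j\leq J$ the bound \eqref{eq_domdif_j<J1} together with \cite[Lemma 4]{HoWe23} yields the desired main term (upgrading afterwards via H\"older's inequality in $v$, or via \cite[Lemma 3]{HoWe23} when $p\geq 1$). For $j>J$, taking $\mathcal{M}^u_p(\Omega)$-quasi-norms in the pointwise estimate above and using the elementary dyadic stability $\osc_{v,\Omega}^{N-1}f(x,s)\lesssim \osc_{v,\Omega}^{N-1}f(x,t)$ for $s\leq t\leq 2s$ (which follows from $B(x,s)\subseteq B(x,t)$ together with the scaling of the volume factor), the discrete sum $\sum_{j>J} 2^{jsq}\|\osc_{v,\Omega}^{N-1}f(\cdot,2^{-j}N)\sep\mathcal{M}^u_p(\Omega)\|^q$ is comparable to the continuous integral $\big(\abs{f}^{(T,v,N)}_{\osc,\Omega}\big)^q$. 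I do not expect any serious obstacle beyond verifying the vanishing-moment property cleanly from \eqref{eq_kern_diff1_new}; everything else is bookkeeping parallel to the proof of \autoref{prop:diff_special_Nsupq}.
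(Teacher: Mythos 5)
Your proof is correct, but it takes a genuinely different route from the one in the paper. The paper deduces \autoref{prop:osc_special} from the difference characterization in \autoref{prop:diff_special_Nsupq} (applied with $T=v=1$), and then reduces ball-averaged differences to oscillations using the estimate \eqref{eq_diff_p_osc1}, the quasi-optimal polynomial projections $\Pi^{N-1}_{x,t}$ from \autoref{lem:opt_pol}, and a telescoping bound $\abs{[f-\Pi^{N-1}_{x,t}f](x)}\lesssim\sum_{\ell\geq0}\osc_{1,\Omega}^{N-1}f(x,2^{-\ell}t)$ whose summability uses $s>0$. You instead go straight from Yao's Littlewood-Paley characterization to oscillations: the observation that \eqref{eq_kern_diff1_new} forces $k_j*\pi=0$ for $\pi\in\mathcal{P}_{N-1}$ and $j\geq1$ is exactly right, and combining it with $\supp k_j\subset B(0,2^{-j}N)\cap(-K)$, $x+K\subset\Omega$, and $\norm{k_j\sep L_\infty}\sim 2^{jd}$ does give $\abs{f_j(x)}\lesssim\osc_{1,\Omega}^{N-1}f(x,2^{-j}N)\lesssim\osc_{v,\Omega}^{N-1}f(x,2^{-j}N)$ for $j\geq1$. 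The dyadic stability of oscillations and the low-frequency bound \eqref{eq_domdif_j<J1} then close the argument just as you describe. Your route is shorter and bypasses the $\Pi^{N-1}_{x,t}$-machinery entirely; what it does not buy you is the difference characterization itself, which the paper needs anyway as a stated main result (\autoref{thm_main_diff_1}), so the paper's detour through \autoref{prop:diff_special_Nsupq} costs nothing in context. Either way, both arguments rely on the same underlying structural input — the moment-annihilation property of Triebel's kernels encoded in \eqref{eq_kern_diff1_new} — merely harvested at different points in the chain.
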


\begin{proof}
    It clearly suffices to prove the claim for $ T < N $. To this end, let $ f \in L^\loc_{\max\{p,v\}}(\Omega)$. 
    Then we can use \autoref{prop:diff_special_Nsupq} with $T=v=1$ to get
\begin{align*}
        \norm{ f \sep \mathcal{N}^{s}_{u,p,q}(\Omega) } 
        & \lesssim  \norm{ \int_{B(\,\cdot\,, R) \cap \Omega}   \abs{f(y)} \d y \sep  \mathcal{M}^{u}_{p}( \Omega)} + \abs{f}^{(1,1,N)}_{\Delta,\Omega} . 
    \end{align*}
In what follows we want to estimate the averaged differences in $\abs{f}^{(1,1,N)}_{\Delta,\Omega}$ from above in terms of polynomials and local oscillations of $f$. We use some ideas from \cite[Lemma~4.10]{ysy}, see also the proof of \cite[Theorem~1]{See1}. At first we observe that a change of measure yields
\begin{align*}
        \abs{f}^{(1,1,N)}_{\Delta,\Omega}  \lesssim \bigg( \int_{0}^{N} t^{-sq} \norm{ t^{-d} \int_{V^{N}(\,\cdot\,,\frac{t}{N})} \abs{\Delta^{N}_{h}f(\cdot)} \d h  \sep \mathcal{M}^{u}_{p}(\Omega) }^q \frac{\d t}{t} \bigg)^{\frac{1}{q}} .
    \end{align*} 
    Next we apply an estimate given in Step~1 of the proof of \cite[Proposition 3]{HoWe23}. 
    It yields that for all $x \in \Omega$, $t>0$, and each $ \pi \in\mathcal{P}_{N-1}$ we have
    \begin{equation}\label{eq_diff_p_osc1}
        t^{-d} \int_{V^{N}\!\left(x,\frac{t}{N}\right)} \abs{\Delta^{N}_{h} f(x)} \d h
        \lesssim \abs{[f- \pi ](x)} + t^{-d} \int_{B(x,t)\cap \Omega} \abs{[f- \pi ](y)} \d y.
    \end{equation}
    To continue, we can choose $ \pi :=\Pi_{x,t}^{N-1}f$ as defined in \autoref{lem:opt_pol} above. We thus find
    \begin{align*}
    \abs{f}^{(1,1,N)}_{\Delta,\Omega} &  \lesssim \bigg( \int_{0}^{N} t^{-sq} \norm{ f- \Pi_{(\cdot),t}^{N-1}f  \sep \mathcal{M}^{u}_{p}(\Omega) }^q \frac{\d t}{t} \bigg)^{\frac{1}{q}} \\
    &  \qquad \qquad + \bigg( \int_{0}^{N} t^{-sq} \norm{  t^{-d} \int_{B(\,\cdot\,,t)\cap \Omega} \abs{[f- \Pi_{(\cdot),t}^{N-1}f ](y)} \d y  \sep \mathcal{M}^{u}_{p}(\Omega) }^q \frac{\d t}{t} \bigg)^{\frac{1}{q}} .
    \end{align*} 
    For the second term we may use \autoref{lem:opt_pol}(iii) to get
    \begin{align*}
    \bigg( \int_{0}^{N} t^{-sq} \norm{  t^{-d} \int_{B(\,\cdot\,,t)\cap \Omega} \abs{[f- \Pi_{(\cdot),t}^{N-1}f ](y)} \d y  \sep \mathcal{M}^{u}_{p}(\Omega) }^q \frac{\d t}{t} \bigg)^{\frac{1}{q}}  \lesssim  \abs{f}^{(N,1,N)}_{\osc,\Omega} .
    \end{align*}
    Consequently we obtain
    \begin{align*}
    \abs{f}^{(1,1,N)}_{\Delta,\Omega} &  \lesssim \bigg( \int_{0}^{N} t^{-sq} \norm{ f- \Pi_{(\cdot),t}^{N-1}f  \sep \mathcal{M}^{u}_{p}(\Omega) }^q \frac{\d t}{t} \bigg)^{\frac{1}{q}}  +  \abs{f}^{(N,1,N)}_{\osc,\Omega}  .
    \end{align*} 
    In order to estimate the remaining integral, we use the following result which is shown in Step~2 of the proof of \cite[Proposition 3]{HoWe23} by means of \autoref{lem:opt_pol}:
    \begin{align*}
       \abs{\big[f-\Pi_{x,t}^{N-1}f\big](x)} 
        \lesssim \sum_{\ell=0}^{\infty} \osc_{1,\Omega}^{N-1}f(x,2^{-\ell}t), \qquad x\in\Omega,  t>0.
    \end{align*}
   Moreover, setting $ m := \min \{ 1 , p , q \} $ standard arguments show that
    \begin{align*}
        \norm{g+h \sep \mathcal{M}^{u}_{p}(\Omega)}^m 
        &\leq \norm{g \sep \mathcal{M}^{u}_{p}(\Omega)}^m + \norm{h \sep \mathcal{M}^{u}_{p}(\Omega)}^m, \qquad g, h \in \mathcal{M}^{u}_{p}(\Omega).
    \end{align*}
    Hence we observe
    \begin{align}
    & \bigg( \int_{0}^{N} t^{-sq} \norm{ f- \Pi_{(\cdot),t}^{N-1}f    \sep \mathcal{M}^{u}_{p}(\Omega) }^q \frac{\d t}{t} \bigg)^{\frac{m}{q}} \nonumber\\
    & \qquad \lesssim \Bigg( \int_{0}^{N} t^{-sq} \Bigg( \norm{  \sum_{\ell=0}^{\infty} \osc_{1,\Omega}^{N-1}f(\cdot,2^{-\ell}t)  \sep \mathcal{M}^{u}_{p}(\Omega) }^m \Bigg)^{\frac{q}{m}} \frac{\d t}{t} \Bigg)^{\frac{m}{q}} \nonumber\\
    & \qquad \lesssim \sum_{\ell=0}^{\infty} \bigg( \int_{0}^{N} t^{-sq} \norm{   \osc_{1,\Omega}^{N-1}f(\cdot,2^{-\ell}t)  \sep \mathcal{M}^{u}_{p}(\Omega) }^q \frac{\d t}{t} \bigg)^{\frac{m}{q}} \nonumber\\
    & \qquad \lesssim \sum_{\ell=0}^{\infty} 2^{- \ell sm}  \bigg( \int_{0}^{2^{- \ell} N} \tau^{-sq} \norm{   \osc_{1,\Omega}^{N-1}f(\cdot,\tau)  \sep \mathcal{M}^{u}_{p}(\Omega) }^q \frac{\d \tau}{\tau} \bigg)^{\frac{m}{q}} \nonumber\\
    & \qquad \lesssim   \bigg( \int_{0}^{ N} t^{-sq} \norm{   \osc_{1,\Omega}^{N-1}f(\cdot,t)  \sep \mathcal{M}^{u}_{p}(\Omega) }^q \frac{\d t}{t} \bigg)^{\frac{m}{q}}, \label{eq:proof_osc_N}
    \end{align}
    where in the last step we used that the series converges, as $ sm > 0$. Consequently,
    \begin{align*}
    \abs{f}^{(1,1,N)}_{\Delta,\Omega} \lesssim \abs{f}^{(N,1,N)}_{\osc,\Omega}  .
    \end{align*} 
    Since we assumed $T < N$, the definition of the local oscillation finally yields
    \begin{align*}
        \abs{f}^{(N,1,N)}_{\osc,\Omega}  
        & \lesssim  \abs{f}^{(T,1,N)}_{\osc,\Omega} +\bigg( \int_{T}^{N} t^{-sq} \norm{  \inf_{\pi \in \mathcal{P}_{N-1}}  t^{-d} \int_{B(\,\cdot\,,t) \cap \Omega} \abs{f(y) - \pi (y)} \d y   \sep \mathcal{M}^{u}_{p}(\Omega) }^q \frac{\d t}{t} \bigg)^{\frac{1}{q}} \\
        &\lesssim  \abs{f}^{(T,1,N)}_{\osc,\Omega} + \bigg( \int_{T}^{N} t^{-sq} \norm{   t^{-d} \int_{B(\,\cdot\,,t) \cap \Omega} \abs{f(y) } \d y  \sep \mathcal{M}^{u}_{p}(\Omega) }^q \frac{\d t}{t} \bigg)^{\frac{1}{q}} \\
        & \lesssim  \abs{f}^{(T,1,N)}_{\osc,\Omega} + \norm{   \int_{B(\,\cdot\,,N) \cap \Omega} \abs{f(y) } \d y  \sep \mathcal{M}^{u}_{p}(\Omega) }
    \end{align*}
    which together with \cite[Lemma 4]{HoWe23} proves the statement for $v=1$. Now the proof can be completed exactly as in Step 5 of \autoref{prop:diff_special_Nsupq}.
\end{proof}

In a next step we prove the corresponding result for the Besov-type spaces $ B^{s, \tau}_{p,q}(\Omega) $.

\begin{prop}\label{prop:osc_special_Bstpq}
    For $d\in\N$ let $ \Omega \subset \R$ be a special Lipschitz domain. 
    Let $0< p  < \infty$, $ 0 \leq \tau < \frac{1}{p} $, $0 < q,T \leq \infty$, $1 \leq v \leq \infty$, $0<R<\infty$, $ N \in \N$, and $s>0$.
    Then for $f \in L^\loc_{\max\{p,v\}}(\Omega)$ there holds
    \begin{align*}
        \norm{ f \sep B^{s, \tau}_{p,q}(\Omega) } 
        & \lesssim \sup_{P\in\mathcal{Q}} \frac{1}{\abs{P}^\tau} \norm{\bigg( \int_{B(\,\cdot\,,R)\cap \Omega} \abs{f(y)}^{v} \d y \bigg)^{\frac{1}{v}} \sep L_{p}(P \cap \Omega)}  + \abs{f}^{(T,v,N)}_{\osc,\tau,\Omega}  . 
    \end{align*}
    If additionally $p\geq 1$, then also
    \begin{align*}
    \norm{ f \sep B^{s, \tau}_{p,q}(\Omega) } \lesssim  \sup_{P\in\mathcal{Q}} \frac{1}{\abs{P}^\tau} \norm{ f \sep L_{p}(P\cap \Omega)} + \abs{f}^{(T,v,N)}_{\osc,\tau,\Omega}  . 
    \end{align*}
    In both cases the implied constants are independent of $f$.
\end{prop}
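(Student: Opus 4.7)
The plan is to mimic the proof of \autoref{prop:osc_special} above, transferring the argument from $\mathcal{M}^u_p(\Omega)$-main terms to the $\sup_{P\in\mathcal{Q}}\abs{P}^{-\tau}\norm{\,\cdot\,\sep L_p(P\cap\Omega)}$ main terms of Besov-type spaces. Without loss of generality assume $T<N$. The starting point is to apply \autoref{prop:diff_special_Bstpq} with $T=v=1$; this reduces matters to controlling $\abs{f}^{(1,1,N)}_{\Delta,\tau,\Omega}$ from above by the claimed oscillation expression. After rescaling $h\mapsto h/N$ inside the $\Delta^N_h$-integral, it suffices to work with the analogous seminorm in which $V^N(\,\cdot\,,t)$ is replaced by $V^N(\,\cdot\,,t/N)$, at the cost of an absolute constant.

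Next, I would invoke the pointwise estimate~\eqref{eq_diff_p_osc1} (valid for every $\pi\in\mathcal{P}_{N-1}$) with the quasi-optimal polynomial $\pi:=\Pi^{N-1}_{x,t}f$ from \autoref{lem:opt_pol}. This splits the averaged difference into two pieces: an averaged error $t^{-d}\int_{B(x,t)\cap\Omega}\abs{f-\pi}\,\d y$, which by \autoref{lem:opt_pol}(iii) is $\lesssim\osc^{N-1}_{1,\Omega}f(x,t)$ and hence contributes at most $\abs{f}^{(N,1,N)}_{\osc,\tau,\Omega}$ after taking $\sup_P\abs{P}^{-\tau}$; and a pointwise error $\abs{(f-\Pi^{N-1}_{x,t}f)(x)}$, which by Step~2 of the proof of \cite[Proposition~3]{HoWe23} satisfies $\abs{(f-\Pi^{N-1}_{x,t}f)(x)}\lesssim\sum_{\ell\geq 0}\osc^{N-1}_{1,\Omega}f(x,2^{-\ell}t)$.

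To handle the pointwise error I would fix $P\in\mathcal{Q}$ and repeat the calculation~\eqref{eq:proof_osc_N} verbatim with $\mathcal{M}^u_p(\Omega)$ replaced by $L_p(P\cap\Omega)$; this is legitimate because for $m:=\min\{1,p,q\}$ we have $\norm{g+h\sep L_p(P\cap\Omega)}^m\leq\norm{g\sep L_p(P\cap\Omega)}^m+\norm{h\sep L_p(P\cap\Omega)}^m$, and the dyadic substitution $\tau=2^{-\ell}t$ together with $sm>0$ makes the resulting geometric series in $\ell$ converge. Taking $\sup_P\abs{P}^{-\tau}$ only at the very end---safe since everything is nonnegative and $P$ was held fixed throughout---yields $\abs{f}^{(1,1,N)}_{\Delta,\tau,\Omega}\lesssim\abs{f}^{(N,1,N)}_{\osc,\tau,\Omega}$.

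Finally, I would split the $t$-integral defining $\abs{f}^{(N,1,N)}_{\osc,\tau,\Omega}$ as $\int_0^T+\int_T^N$; the first piece equals $\abs{f}^{(T,1,N)}_{\osc,\tau,\Omega}$, and for the second the choice $\pi\equiv 0$ in the definition of the oscillation together with \cite[Lemma~4]{HoWe23} and \autoref{lem:discrete_morrey} absorbs the remainder into $\sup_P\abs{P}^{-\tau}\norm{\int_{B(\,\cdot\,,R)\cap\Omega}\abs{f(y)}\,\d y\sep L_p(P\cap\Omega)}$. H\"older's inequality then upgrades the $v=1$ bound to general $v\geq 1$, and for $p\geq 1$ \cite[Lemma~3]{HoWe23} replaces the averaged main term by $\sup_P\abs{P}^{-\tau}\norm{f\sep L_p(P\cap\Omega)}$. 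I expect the main obstacle to lie in the third step, namely in carefully commuting $\sup_P$ with the telescoping/triangle-type arguments used in~\eqref{eq:proof_osc_N}; however, since $P$ is held fixed until the very end and the integrands are nonnegative, this turns out to be routine.
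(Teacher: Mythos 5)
Your proposal is correct and follows essentially the same route as the paper's proof: reduce to differences via \autoref{prop:diff_special_Bstpq} with $T=v=1$, rescale to $V^N(\cdot,t/N)$, apply \eqref{eq_diff_p_osc1} with the quasi-optimal polynomial from \autoref{lem:opt_pol}, repeat the computation \eqref{eq:proof_osc_N} for each fixed $P$ (the paper achieves exactly your ``replace $\mathcal{M}^u_p(\Omega)$ by $L_p(P\cap\Omega)$'' step by simply setting $u:=p$ so that $\mathcal{M}^p_p=L_p$), then split $\int_0^T+\int_T^N$ and conclude with \cite[Lemmas~3 and 4]{HoWe23}, \autoref{lem:discrete_morrey}, and H\"older. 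Your remark about deferring the supremum over $P$ until the end is precisely the (routine) point the paper handles the same way.
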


\begin{proof}
We proceed as in the proof of \autoref{prop:osc_special} and w.l.o.g.\ assume that $ T < N $ and $q<\infty$.
That is, given $ f \in L^\loc_{\max\{p,v\}}(\Omega)$ we apply \autoref{prop:diff_special_Bstpq} (with $T=v=1$) to get
\begin{align*}
        \norm{ f \sep B^{s, \tau}_{p,q}(\Omega) } 
        & \lesssim \sup_{P\in\mathcal{Q}} \frac{1}{\abs{P}^\tau} \norm{\int_{B(\,\cdot\,,R)\cap \Omega} \abs{f(y)} \d y  \sep L_{p}(P \cap \Omega)}  + \abs{f}^{(1,1,N)}_{\Delta, \tau , \Omega},
    \end{align*}
    where now the averaged differences in $\abs{f}^{(1,1,N)}_{\Delta, \tau , \Omega}$ will be estimated from above in terms of $\abs{f}^{(N,1,N)}_{\osc,\tau,\Omega}$.  As before a change of measure yields
    \begin{align*}
\abs{f}^{(1,1,N)}_{\Delta,\tau,\Omega}  \lesssim \sup_{P\in\mathcal{Q}} \frac{1}{\abs{P}^\tau} \bigg( \int_{0}^{N} t^{-sq} \norm{  t^{-d} \int_{V^{N}(\,\cdot\,,\frac{t}{N})} \abs{\Delta^{N}_{h}f(\cdot)} \d h  \sep L_{p}(P\cap \Omega) }^q \frac{\d t}{t} \bigg)^{\frac{1}{q}} .
    \end{align*}    
    We then apply \eqref{eq_diff_p_osc1} with $ \pi :=\Pi_{x,t}^{N-1}f$ as well as \autoref{lem:opt_pol}(iii) to obtain
\begin{align}
\abs{f}^{(1,1,N)}_{\Delta,\tau,\Omega} 
      &  \lesssim \sup_{P\in\mathcal{Q}} \frac{1}{\abs{P}^\tau} \bigg( \int_{0}^{N} t^{-sq} \norm{  f- \Pi_{(\cdot),t}^{N-1}f \sep L_{p}(P\cap \Omega) }^q \frac{\d t}{t} \bigg)^{\frac{1}{q}} \nonumber\\  
      & \quad + \sup_{P\in\mathcal{Q}} \frac{1}{\abs{P}^\tau} \bigg( \int_{0}^{N} t^{-sq} \norm{   t^{-d} \int_{B(\cdot,t)\cap \Omega} \abs{[f- \Pi_{(\cdot),t}^{N-1}f ](y)} \d y  \sep L_{p}(P\cap \Omega) }^q \frac{\d t}{t} \bigg)^{\frac{1}{q}} \nonumber\\
      &\lesssim \sup_{P\in\mathcal{Q}} \frac{1}{\abs{P}^\tau} \bigg( \int_{0}^{N} t^{-sq} \norm{  f- \Pi_{(\cdot),t}^{N-1}f    \sep L_{p}(P\cap \Omega) }^q \frac{\d t}{t} \bigg)^{\frac{1}{q}} + \abs{f}^{(N,1,N)}_{\osc,\tau,\Omega}. \label{eq:proof_oscB}
\end{align}
    To further estimate the first term, we fix $P\in \mathcal{Q}$ for a moment. Then a slight modification of \eqref{eq:proof_osc_N} applied with $m := \min \{ 1 , p , q \}$ as well as $u:=p$ (such that $\mathcal{M}_p^p=L_p$) yields
\begin{align*}
    \int_{0}^{N} t^{-sq} \norm{ (f- \Pi_{(\cdot),t}^{N-1}f) \, \chi_P  \sep  L_{p}(\Omega) }^q \frac{\d t}{t} 
    &\lesssim \int_{0}^{ N} t^{-sq} \norm{ \osc_{1,\Omega}^{N-1}f(\cdot,t) \, \chi_P(\cdot) \sep  L_{p}(\Omega) }^q \frac{\d t}{t}.
\end{align*}
In combination with \eqref{eq:proof_oscB} this implies 
$\abs{f}^{(1,1,N)}_{\Delta,\tau,\Omega}  \lesssim  \abs{f}^{(N,1,N)}_{\osc,\tau,\Omega}$, as desired.
Now we again use $T < N$ and the definition of $\osc_{1,\Omega}^{N-1} f(\cdot, t)$ to find
\begin{align*}
    \abs{f}^{(N,1,N)}_{\osc,\tau,\Omega} 
    &\lesssim \abs{f}^{(T,1,N)}_{\osc,\tau,\Omega} + \sup_{P\in\mathcal{Q}} \frac{1}{\abs{P}^\tau} \left(\int_{T}^N t^{-sq} \norm{ \osc_{1,\Omega}^{N-1} f(\cdot, t) \sep L_p(P\cap \Omega) }^{q} \frac{\d t}{t} \right)^{\frac{1}{q}} \\
    &\lesssim \abs{f}^{(T,1,N)}_{\osc,\tau,\Omega} + \sup_{P\in\mathcal{Q}} \frac{1}{\abs{P}^\tau} \left(\int_{T}^N t^{-sq} \norm{   t^{-d} \int_{B(\,\cdot\,,t) \cap \Omega} \abs{f(y)} \d y   \sep L_p(P\cap \Omega) }^{q} \frac{\d t}{t} \right)^{\frac{1}{q}} \\
    &\lesssim \abs{f}^{(T,1,N)}_{\osc,\tau,\Omega} + \sup_{P\in\mathcal{Q}} \frac{1}{\abs{P}^\tau} \norm{ \int_{B(\,\cdot\,,N) \cap \Omega} \abs{f(y)} \d y   \sep L_p(P\cap \Omega) }.
\end{align*}
Thus, \cite[Lemma 4]{HoWe23} combined with \autoref{lem:discrete_morrey} concludes the proof in the case $v=1$. Once more, the general case can be derived from this using the arguments given in Step~5 of the proof of  \autoref{prop:diff_special_Nsupq}.
\end{proof}

\subsection{Oscillations on Bounded Lipschitz Domains}
In this subsection we transfer our previous findings to the case of \emph{bounded} Lipschitz domains $\Omega\subset\R$. We start with the case of Besov-Morrey spaces. 

\begin{prop}\label{prop:osc}
    For $d\in\N$ let $ \Omega \subset \R$ be a bounded Lipschitz domain, $0< p \leq u < \infty$, $0 < q,T \leq \infty$, $1 \leq v \leq \infty$, $0<R<\infty$, $ N \in \N$, and $s>\sigma_p$.
    Then
    \begin{align*}
        \norm{ f \sep \mathcal{N}^{s}_{u,p,q}(\Omega) } 
        & \lesssim  \norm{ \bigg(\int_{B(\,\cdot\,, R) \cap \Omega}   \abs{f(y)}^v \d y\bigg)^{\frac{1}{v}} \sep  \mathcal{M}^{u}_{p}( \Omega)} + \abs{f}^{(T,v,N)}_{\osc,\Omega}, \qquad f \in L^\loc_{\max\{p,v\}}(\Omega). 
    \end{align*}
    If additionally $p\geq 1$, then also $\norm{ f \sep \mathcal{N}^{s}_{u,p,q}(\Omega) } \lesssim  \norm{ f \sep  \mathcal{M}^{u}_{p}( \Omega)} + \abs{f}^{(T,v,N)}_{\osc,\Omega}$.
    In both cases the implied constants are independent of $f$.
\end{prop}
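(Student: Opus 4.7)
The plan is to reduce the claim for the bounded Lipschitz domain $\Omega$ to the special Lipschitz case settled in \autoref{prop:osc_special} via a smooth localization. By \autoref{lipdo}(ii) there exists a finite open cover $B_0,B_1,\ldots,B_K$ of $\overline{\Omega}$ such that $B_0\subset\subset\Omega$ and, for each $k\ge 1$, after a suitable rigid motion $\Omega\cap B_k=\Omega_k\cap B_k$ for some special Lipschitz domain $\Omega_k\subset\R$. I would pick a subordinate smooth partition of unity $\{\phi_k\}_{k=0}^{K}\subset\mathcal{D}(\R)$ with $\supp\phi_k\subset B_k$ and write $f=\sum_{k=0}^{K}\phi_k f$. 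With $m:=\min\{1,p,q\}$, the $m$-triangle inequality in $\mathcal{N}^{s}_{u,p,q}(\Omega)$ reduces the problem to bounding each $\norm{\phi_k f\sep \mathcal{N}^{s}_{u,p,q}(\Omega)}$ separately.

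For the interior piece $k=0$, the function $\phi_0 f$ has compact support in $\Omega$, so its trivial zero-extension lies in $\mathcal{N}^{s}_{u,p,q}(\R)$ with comparable quasi-norm (the hypothesis $s>\sigma_p$ is used here to justify the zero-extension in the quasi-Banach range), and \autoref{thm_osc_Rda=2} applies on $\R$. For each $k\ge 1$, $\supp(\phi_k f)\subset B_k$ together with $\Omega\cap B_k=\Omega_k\cap B_k$ allows \autoref{lem:tools}(ii)--(iii) to transfer the quasi-norm from $\Omega$ to $\Omega_k$, where \autoref{prop:osc_special} applies directly on the special Lipschitz domain $\Omega_k$.

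It remains to undo the localization on the right-hand side so as to replace $\phi_k f$ by $f$. For the main term the pointwise bound $\abs{\phi_k f}\le \norm{\phi_k \sep L_\infty(\R)}\abs{f}$ combined with \cite[Lemma~3]{HoWe23} (when $p\ge 1$) works exactly as in Step~5 of the proof of \autoref{prop:diff_special_Nsupq}. For the oscillation, the key idea is a Leibniz-type estimate obtained by choosing as a near-best polynomial for $\phi_k f$ on $B(x,t)$ the product of $\phi_k$ with a near-best polynomial for $f$, and then Taylor-expanding $\phi_k$ about $x$ up to order $N-1$. For $x\in\supp\phi_k$ and $t\le t_0$ small enough that $B(x,t)\subset B_k$ (so that in particular $B(x,t)\cap\Omega_k=B(x,t)\cap\Omega$) this yields
\begin{equation*}
    \osc^{N-1}_{v,\Omega_k}(\phi_k f)(x,t) \lesssim \sum_{j=0}^{N-1} t^{j} \norm{D^{j}\phi_k \sep L_\infty(\R)} \osc^{N-1-j}_{v,\Omega} f(x,t) + t^{N} \bigg(t^{-d}\int_{B(x,t)\cap\Omega}\abs{f(y)}^{v}\d y\bigg)^{1/v},
\end{equation*}
while for $x\notin\supp\phi_k$ with $t\le t_0$ one even has $\phi_k f\equiv 0$ on $B(x,t)$. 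Plugging this into $\abs{\phi_k f}^{(T,v,N)}_{\osc,\Omega_k}$ and exploiting $s<N$ via Hardy's inequality absorbs all $j\ge 1$ contributions into $\abs{f}^{(T,v,N)}_{\osc,\Omega}$ and the main term on $\Omega$; the complementary range $t>t_0$ is controlled by the same argument as in \autoref{prop:osc_special}.

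The main obstacle I anticipate is precisely this Leibniz-type oscillation estimate, together with the passage from $\osc^{N-1}_{v,\Omega_k}$ back to $\osc^{N-1}_{v,\Omega}$ on the right-hand side. This passage rests on the local identity $\Omega_k\cap B_k=\Omega\cap B_k$ combined with $\supp\phi_k\subset B_k$; the condition $s>\sigma_p$ ultimately enters through \autoref{lem:tools}(i), which supplies the multiplier property of $\phi_k$ on $\mathcal{N}^{s}_{u,p,q}$ that underlies the whole localization scheme.
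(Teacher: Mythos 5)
Your localization scheme is the right general approach and matches the paper's in outline, but there is a genuine gap: your argument only works when $s<N$, whereas \autoref{prop:osc} assumes merely $s>\sigma_p$ and imposes no upper bound on $s$ relative to $N$. This matters in two places. First, for the interior piece $k=0$ you invoke \autoref{thm_osc_Rda=2} directly with the given $N$, but that theorem requires $s<N$. Second, your Leibniz-type reduction produces the lower-order oscillations $t^{j}\,\osc^{N-1-j}_{v,\Omega}f(x,t)$ for $1\le j\le N-1$ and a remainder of size $t^{N}\big(t^{-d}\int_{B(x,t)\cap\Omega}\abs{f}^{v}\d y\big)^{1/v}$; after inserting $\int_0^{T}t^{-sq}(\cdot)^q\frac{\d t}{t}$ the remainder contributes $\int_0^{T}t^{(N-s)q}\norm{\cdots}^q\frac{\d t}{t}$, which only converges for $s<N$. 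Controlling the $j\ge1$ terms via Marchaud/Hardy-type inequalities (which you only sketch and which would need to be spelled out) likewise hinges on the positivity of $N-s$. For $s\ge N$ none of these steps goes through.

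The paper circumvents exactly this by \emph{raising the polynomial degree}: it introduces $\widetilde N:=N+L-1$ with $L>s+d$, so that $s<\widetilde N$ holds automatically, and applies \autoref{thm_osc_Rda=2} and \autoref{prop:osc_special} at level $\widetilde N$ with $\widetilde v=1$. The Taylor expansion of the cutoff $\sigma_k$ is then taken to degree $L-1=\widetilde N-N$ (not $N-1$), so that multiplying the Taylor polynomial by a near-best polynomial of degree $N-1$ for $f$ yields a competitor in $\mathcal P_{\widetilde N-1}$. The resulting estimate is
\begin{align*}
    \osc_{1,\Omega}^{\widetilde N-1}\big[(\sigma_k Ef)|_\Omega\big](x,c_k t)
    \;\lesssim\; \osc_{1,\Omega}^{N-1}f(x,c_k t)
    + t^{L-d}\int_{B(x,c_k t)\cap\Omega}\abs{f(y)}\,\d y,
\end{align*}
with a \emph{single} oscillation of the correct order $N-1$ (no sum over $j$, no Marchaud step), and a remainder that is integrable precisely because $L>s+d$. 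This is what makes the proof work for all $s>\sigma_p$ and every $N\in\N$. Your approach would need to be modified in the same way (or you would need to add the hypothesis $s<N$, which is weaker than what \autoref{prop:osc} actually claims and would require a separate argument to dispatch the case $s\ge N$).
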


\begin{proof}
For the proof we start with a standard localization procedure as described in detail in \cite[Section 4.2]{HoWe23}. 
First of all, there exist open balls $B_1,\ldots, B_m$ in $\R$, affine-linear diffeomorphisms $\Phi_1,\ldots,\Phi_m\colon\R\to\R$, and $[0,1]$-valued functions $\sigma_1,\ldots,\sigma_m\in\mathcal{D}(\R)$ with the following properties for $k=1,\ldots,m$:
    \begin{itemize}
        \item $B_k \cap\partial\Omega\neq\emptyset$ and $\partial\Omega\subset\bigcup_{k=1}^m B_k$,
        \item $\supp \sigma_k \subset B_k$ and $\sum_{k=1}^m\sigma_k \equiv 1$ on some neighborhood of $\partial\Omega$,
        \item $\Phi_k(B_k)\cap \Phi_k(\Omega)$ can be extended to a special Lipschitz domain $\omega_k \subset\R$.
    \end{itemize}
    Setting $\sigma_0 := (1- \sum_{k=1}^m\sigma_k) \chi_\Omega$ then yields $\sigma_0 \in\mathcal{D}(\R)$ with values in $[0,1]$ and $\supp \sigma_0\subset \Omega$ such that $\sum_{k=0}^m\sigma_k\equiv 1$ on $\Omega_\varepsilon$ with some $\varepsilon>0$.
    Following the lines of the proof of \cite[Proposition 4]{HoWe23} with \cite[Lemma 5]{HoWe23} replaced by \autoref{lem:tools}, we see that every $f\in\mathcal{D}'(\Omega)$ can be decomposed into a finite sum of suitably localized building blocks such that
    \begin{align*}
        \norm{f \sep \mathcal{N}^{s}_{u,p,q}(\Omega)} 
        &\lesssim \norm{f_0 \sep \mathcal{N}^{s}_{u,p,q}(\R)} + \sum_{k=1}^m \norm{f_k \sep \mathcal{N}^{s}_{u,p,q}(\omega_k)} .
    \end{align*}
    Moreover, we observe that $f\in L^\loc_{\max\{p,v\}}(\Omega)$ is regular and we may take
    $$
        f_0:=\sigma_0 Ef \in L^\loc_{\max\{p,v\}}(\R)
    $$
    (where again $E$ denotes the trivial extension from $\Omega$ to $\R$) as well as
    $$
         f_k:=\big[(\sigma_k Ef) \circ \Phi_k^{-1}\big]\vert_{\omega_k}\in L^\loc_{\max\{p,v\}}(\omega_k), \qquad k=1,\ldots,m.
    $$
    Now the idea is to use our previously derived results to upper bound the quasi-norms of these $f_k$ by our quantities of interest based on $f$ itself, in order to prove estimates that match the lower bounds stated in \autoref{prop:N_Omega_lower}.
    To do so, let $\widetilde{N}:=N+L-1$ with $N\in\N$ and $L>s+d$ such that in particular $s<\widetilde{N}$. 
    Since $ s> \sigma_p $ \autoref{thm_osc_Rda=2} (applied with $\widetilde{R}:=\widetilde{v}:=1$ as well as $\widetilde{N}$ and some small $0<\widetilde{T}<\infty$ to be specified later on) implies
    \begin{align*}
        \norm{ f_0 \sep \mathcal{N}^{s}_{u,p,q}(\R) }
        \sim \norm{ f_0 \sep \mathcal{N}^{s}_{u,p,q}(\R) }^{(1,\widetilde{T},1,\widetilde{N})}_\osc
        = \norm{ \int_{B(\,\cdot\,, 1)}   \abs{f_0(y)} \d y \sep  \mathcal{M}^{u}_{p}( \R)} + \abs{f_0}^{(\widetilde{T},1,\widetilde{N})}_{\osc}
    \end{align*}
    and similarly for $k=1,\ldots,m$ \autoref{prop:osc_special} yields
    \begin{align*}
        \norm{ f_k \sep \mathcal{N}^{s}_{u,p,q}(\omega_k) }
        \lesssim \norm{ \int_{B(\,\cdot\,, 1)\cap \omega_k}   \abs{f_k(y)} \d y \sep  \mathcal{M}^{u}_{p}(\omega_k)} + \abs{f_k}^{(\widetilde{T},1,\widetilde{N})}_{\osc,\omega_k}.
    \end{align*}
    It is easily shown that therein for each $k=0,\ldots,m$ the main term is bounded by
    $$
        \norm{ \int_{B(\,\cdot\,, R)\cap \Omega}  \abs{f(y)} \d y \sep  \mathcal{M}^{u}_{p}(\Omega)},
    $$
    see Steps~3b and 4 in the proof of \cite[Proposition~4]{HoWe23} for details.
    In order to handle the quasi-semi-norms for $k=1,\ldots,m$ we can use the support properties of $f_k$ to conclude that there exist constants $t_k,c_k>0$ depending on $\Omega$ such that
    for all $x\in \omega_k$ and $0\leq t\leq t_k$
    \begin{align}\label{eq:proof_oscN}
        \osc_{1,\omega_k}^{\widetilde{N}-1}f_k(x,t) 
        \lesssim \chi_{\omega_k\cap\Omega_k}(x) \, E\Big(\osc_{1,\Omega}^{\widetilde{N}-1} \big[ (\sigma_k Ef)|_\Omega\big]\big(\Phi_k^{-1}(\cdot), c_k t\big) \Big)(x).
    \end{align}
    We refer to Step~3c in the proof of \cite[Proposition~4]{HoWe23} for details.
    Setting $\Omega_k:=\Phi_{k}(\Omega)$, \cite[Lemma~3(v) and (iv)]{HoWe23} thus yields
    \begin{align}
        \norm{\osc_{1,\omega_k}^{\widetilde{N}-1}f_k(\cdot,t) \sep \mathcal{M}^u_p(\omega_k)}
        &\lesssim \norm{\chi_{\omega_k\cap\Omega_k}(\cdot) \, E\Big(\osc_{1,\Omega}^{\widetilde{N}-1} \big[ (\sigma_k Ef)|_\Omega\big]\big(\Phi_k^{-1}(\cdot), c_k t\big) \Big)\Big|_{\Omega_k}  \sep \mathcal{M}^u_p(\Omega_k)} \nonumber\\
        &\leq \norm{ \Big(\osc_{1,\Omega}^{\widetilde{N}-1} \big[ (\sigma_k Ef)|_\Omega\big]\big(\cdot, c_k t\big) \Big) \circ \Phi_k^{-1}  \sep \mathcal{M}^u_p(\Phi_k(\Omega))} \nonumber\\
        &\sim \norm{ \osc_{1,\Omega}^{\widetilde{N}-1} \big[ (\sigma_k Ef)|_\Omega\big](\cdot, c_k t) \sep \mathcal{M}^u_p(\Omega)}, \qquad 0\leq t \leq t_k. \label{eq:proof_osc_morrey_bound}
    \end{align}
    Now for fixed $x\in\Omega$ a Taylor expansion of degree $L-1=\widetilde{N}-N$ of $\sigma_k$ into
    $$
        \sigma_k(y)=T_k(y)+R_k(y) \quad \text{with} \quad \abs{T_k(y)}\lesssim 1 \quad\text{and}\quad \abs{R_k(y)}\lesssim t^L, \qquad y\in B(x,c_k t)\cap\Omega,
    $$
    allows to further simplify this expression, as
    $$
        \osc_{1,\Omega}^{\widetilde{N}-1} \big[ (\sigma_k Ef)|_\Omega\big](x, c_k t)
        \lesssim \osc_{1,\Omega}^{N-1} f(x, c_k t) + t^{L-d} \int_{B(x,c_k t)\cap \Omega} \abs{f(y)} \d y
    $$
    with implied constants independent of $x$ and $t$. Again we refer to \cite[Proposition~4]{HoWe23}, see Step 3c of the proof. Hence $L>s+d$ shows that for every $k=1,\ldots,m$
    \begin{align*}
        \abs{f_k}^{(\widetilde{T},1,\widetilde{N})}_{\osc,\omega_k} 
         &\lesssim \bigg( \int_{0}^{\widetilde{T}} t^{-sq} \norm{ \osc_{1,\Omega}^{\widetilde{N}-1} \big[ (\sigma_k Ef)|_\Omega\big](\,\cdot\,, c_k t) \sep \mathcal{M}^u_p(\Omega)}^q \frac{\d t}{t} \bigg)^{\frac{1}{q}} \\
         &\lesssim \bigg( \int_{0}^{\widetilde{T}} t^{-sq} \norm{ \osc_{1,\Omega}^{N-1} f(\,\cdot\,, c_k t) \sep \mathcal{M}^u_p(\Omega)}^q \frac{\d t}{t} \bigg)^{\frac{1}{q}} \\
         &\qquad \qquad + \bigg( \int_{0}^{\widetilde{T}} t^{(L-s-d)q} \norm{ \int_{B(\cdot,c_k\,t)\cap \Omega} \abs{f(y)} \d y \sep \mathcal{M}^u_p(\Omega)}^q \frac{\d t}{t} \bigg)^{\frac{1}{q}}\\
         &\lesssim \abs{f}^{(T,1,N)}_{\osc,\Omega} + \norm{ \int_{B(\,\cdot\,,R)\cap \Omega} \abs{f(y)} \d y \sep \mathcal{M}^u_p(\Omega)}
    \end{align*}
    if $\widetilde{T}$ was chosen smaller than each $\min\{t_k, \frac{T}{c_k}, \frac{R}{c_k}\}$.
    For $k=0$ exactly the same arguments (formally setting $\omega_0:=\R$, $\Phi_0:=\mathrm{id}$ and $c_0=1$) prove that also
    \begin{align*}
        \abs{f_0}^{(\widetilde{T},1,\widetilde{N})}_{\osc} 
         &\lesssim \abs{f}^{(T,1,N)}_{\osc,\Omega} + \norm{ \int_{B(\,\cdot\,,R)\cap \Omega} \abs{f(y)} \d y \sep \mathcal{M}^u_p(\Omega)} .
    \end{align*}
    A combination of these estimates with the technique described in Step 5 of the proof of \autoref{prop:diff_special_Nsupq} completes the proof.
\end{proof}

Similarly, we derive the corresponding assertion for Besov-type spaces on bounded Lipschitz domains.
\begin{prop}\label{prop:osc_Bstpq}
    For $d\in\N$ let $ \Omega \subset \R$ be a bounded Lipschitz domain. Let $0< p  < \infty$, $ 0 \leq \tau < \frac{1}{p} $, $0 < q,T \leq \infty$, $1 \leq v \leq \infty$, $0<R<\infty$, $ N \in \N$, and $s>\sigma_p$.
    Then for $f \in L^\loc_{\max\{p,v\}}(\Omega)$ there holds
    \begin{align*}
        \norm{ f \sep B^{s, \tau}_{p,q}(\Omega) } 
        & \lesssim \sup_{P\in\mathcal{Q}} \frac{1}{\abs{P}^\tau} \norm{\Big( \int_{B(\,\cdot\,,R)\cap \Omega} \abs{f(y)}^{v} \d y \Big)^{\frac{1}{v}} \sep L_{p}(P \cap \Omega)}  + \abs{f}^{(T,v,N)}_{\osc,\tau,\Omega}  . 
    \end{align*}
    If additionally $p\geq 1$, then also
    \begin{align*}
    \norm{ f \sep B^{s, \tau}_{p,q}(\Omega) } \lesssim  \sup_{P\in\mathcal{Q}} \frac{1}{\abs{P}^\tau} \norm{ f \sep L_{p}(P\cap \Omega)} + \abs{f}^{(T,v,N)}_{\osc,\tau,\Omega}  . 
    \end{align*}
    In both cases the implied constants are independent of $f$.
\end{prop}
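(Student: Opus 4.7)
The plan is to mirror the argument of \autoref{prop:osc} (the Besov-Morrey analogue on bounded Lipschitz domains), replacing each Morrey-norm ingredient by its Besov-type counterpart. I would first apply the standard partition-of-unity localization detailed in Section~4.2 of \cite{HoWe23}: choose open balls $B_1,\ldots,B_m$ covering $\partial\Omega$, affine-linear diffeomorphisms $\Phi_k\colon\R\to\R$ such that $\omega_k:=\Phi_k(B_k\cap\Omega)$ extends to a special Lipschitz domain, and a smooth $[0,1]$-valued partition of unity $\sigma_0,\ldots,\sigma_m$ with $\supp\sigma_0\subset\Omega$ and $\sum_k\sigma_k\equiv 1$ on some neighbourhood of $\partial\Omega$. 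Parts (i)--(iii) of \autoref{lem:tools} (which is the Besov-type analogue of the Morrey statement used in the proof of \autoref{prop:osc}) together with the decomposition $f=\sum_k\sigma_k f$ then yield
\begin{align*}
    \norm{f\sep B^{s,\tau}_{p,q}(\Omega)}\lesssim \norm{f_0\sep B^{s,\tau}_{p,q}(\R)}+\sum_{k=1}^m\norm{f_k\sep B^{s,\tau}_{p,q}(\omega_k)},
\end{align*}
where $f_0:=\sigma_0Ef$ and $f_k:=\bigl[(\sigma_kEf)\circ\Phi_k^{-1}\bigr]\big|_{\omega_k}$ are locally $\max\{p,v\}$-integrable on the respective sets.

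Next, I would fix an auxiliary order $\widetilde{N}:=N+L-1$ with $L>s+d$ (so that $s<\widetilde{N}$) together with a sufficiently small $\widetilde{T}>0$, and apply \autoref{thm_osc_B_Rda=2} to $f_0$ and \autoref{prop:osc_special_Bstpq} to each $f_k$, both with auxiliary parameters $\widetilde{R}=\widetilde{v}=1$; this is permissible since $s>\sigma_p$. Each resulting summand is bounded by a $\sup_{P\in\mathcal{Q}}\abs{P}^{-\tau}\norm{\,\cdot\sep L_p(P\cap\cdot)}$-type main term plus an $\widetilde{N}$-th order oscillation seminorm. For the main terms, Steps~3b and~4 of the proof of Proposition~4 in \cite{HoWe23} transfer essentially verbatim once one invokes \autoref{lem:discrete_morrey} to switch between the $\sup_{P}\abs{P}^{-\tau}\norm{\,\cdot\sep L_p(P\cap\Omega)}$-functional and the Morrey quasi-norm $\norm{\,\cdot\sep\mathcal{M}^u_p(\Omega)}$ with $u:=1/(\tfrac{1}{p}-\tau)$. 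For the seminorm pieces, the pointwise-in-$t$ support/reflection estimate and the Taylor expansion of $\sigma_k$ of degree $L-1$ used in the proof of \autoref{prop:osc} still apply, producing for $x\in\Omega$ and small $t>0$ the bound
\begin{align*}
    \osc_{1,\Omega}^{\widetilde{N}-1}\bigl[(\sigma_kEf)|_\Omega\bigr](x,c_kt)\lesssim\osc_{1,\Omega}^{N-1}f(x,c_kt)+t^{L-d}\int_{B(x,c_kt)\cap\Omega}\abs{f(y)}\,\d y.
\end{align*}
After choosing $\widetilde{T}\le\min_k\{t_k,T/c_k,R/c_k\}$ the error term is absorbed into the main term (thanks to $L>s+d$), a change of variables $c_kt\mapsto t$ removes the factor $c_k$, and a H\"older step in $v\ge 1$ upgrades the ensuing $\osc_1$ to $\osc_v$.

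I expect the principal obstacle to be the faithful transport of the $\sup_{P\in\mathcal{Q}}\abs{P}^{-\tau}\norm{\,\cdot\sep L_p(P\cap\cdot)}$-functional through the composition with the affine-linear diffeomorphism $\Phi_k$. Whereas in the Morrey case the dilation- and translation-invariance of $\norm{\,\cdot\sep\mathcal{M}^u_p}$ follows directly from \cite[Lemma~3(iv),(v)]{HoWe23}, the sup-over-dyadic-cubes formulation is sensitive to the fact that $\Phi_k(P)$ is a parallelepiped rather than a dyadic cube. This I would resolve by covering each $\Phi_k(P)$ by a uniformly bounded number of dyadic cubes of comparable side-length (the very mechanism underlying the proof of \autoref{lem:discrete_morrey}), so that the sup-over-cubes functional can be pulled through the pullback up to multiplicative constants depending only on the Lipschitz data of $\Omega$. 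The optional improvement under $p\ge 1$ is obtained analogously by combining \cite[Lemma~3]{HoWe23} with \autoref{lem:discrete_morrey}, exactly as in Step~5 of the proof of \autoref{prop:diff_special_Nsupq}.
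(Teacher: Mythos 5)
Your proposal is correct and follows the same route as the paper: localization via a partition of unity and affine diffeomorphisms $\Phi_k$, reduction to $\R$ and to special Lipschitz domains via \autoref{thm_osc_B_Rda=2} and \autoref{prop:osc_special_Bstpq}, Taylor expansion of $\sigma_k$ to pass from order $\widetilde N$ to order $N$, and handling the $\tau$-weighted sup-over-dyadic-cubes by covering the affine (pre)image of a dyadic cube $P$ by boundedly many dyadic cubes $P_\ell$ with $\abs{P_\ell}=\abs{P}$, so that the weight $\abs{P}^{-\tau}$ transfers directly. Your identification of the cube-covering under $\Phi_k$ as the principal new obstacle and its resolution via the mechanism of \autoref{lem:discrete_morrey} is precisely the paper's argument.
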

\begin{proof}
    This result can be proved with similar methods as described in the proof of \autoref{prop:osc}. Let us only sketch the needed modifications.
    At first, the same localization as above together with \autoref{thm_osc_B_Rda=2} and \autoref{prop:osc_special_Bstpq} yields
    \begin{align*}
        \norm{f \sep B^{s,\tau}_{p,q}(\Omega)} 
        &\lesssim \norm{f_0 \sep B^{s,\tau}_{p,q}(\R)} + \sum_{k=1}^m \norm{f_k \sep B^{s,\tau}_{p,q}(\omega_k)},
    \end{align*}
    where
    \begin{align*}
        \norm{ f_0 \sep B^{s,\tau}_{p,q}(\R) }
        \sim  \sup_{P\in\mathcal{Q}} \frac{1}{\abs{P}^\tau} \norm{ \int_{B(\,\cdot\,, 1)}   \abs{f_0(y)} \d y \sep  L_{p}(P)} + \abs{f_0}^{(\widetilde{T},1,\widetilde{N})}_{\osc,\tau}
    \end{align*}
    as well as for $k=1,\ldots,m$
    \begin{align*}
        \norm{ f_k \sep B^{s,\tau}_{p,q}(\omega_k) }
        \lesssim \sup_{P\in\mathcal{Q}} \frac{1}{\abs{P}^\tau} \norm{ \int_{B(\,\cdot\,, 1)\cap \omega_k}   \abs{f_k(y)} \d y \sep  L_{p}(P\cap \omega_k)} + \abs{f_k}^{(\widetilde{T},1,\widetilde{N})}_{\osc,\tau,\omega_k}.
    \end{align*}
    Therein, thanks to \autoref{lem:discrete_morrey}, the main terms can be equivalently rewritten in terms of Morrey-norms and thus be estimated exactly as before. 
    
    For the quasi-semi-norms we note that there is some $K=K_k\in\N$ such that for each fixed cube $P\in\mathcal{Q}$ we find disjoint $P_1,\ldots,P_K\in \mathcal{Q}$ with
    $$
        P \subseteq \bigcup_{\ell=1}^K \Phi_k(P_\ell) \quad \text{and}\quad \abs{P}=\abs{P_1}=\ldots=\abs{P_K}. 
    $$
    Then we may multiply \eqref{eq:proof_oscN} by $\chi_P\leq \sum_{\ell=1}^K \chi_{\Phi_k(P_\ell)}$ to replace \eqref{eq:proof_osc_morrey_bound} by
    \begin{align*}
        &\norm{\osc_{1,\omega_k}^{\widetilde{N}-1}f_k(\cdot,t) \sep L_p(P\cap\omega_k)} \\
        &\qquad \lesssim \sum_{\ell=1}^K \norm{\chi_{\Phi_k(P_\ell)}(\cdot)\, \chi_{\omega_k\cap\Omega_k}(\cdot) \, E\Big(\osc_{1,\Omega}^{\widetilde{N}-1} \big[ (\sigma_k Ef)|_\Omega\big]\big(\Phi_k^{-1}(\cdot), c_k t\big) \Big)\Big|_{\omega_k}  \sep L_p(\omega_k)} \\
        &\qquad\sim \sum_{\ell=1}^K \norm{\chi_{\Phi_k(P_\ell)}(\cdot)\,\chi_{\omega_k\cap\Omega_k}(\cdot) \, E\Big(\osc_{1,\Omega}^{\widetilde{N}-1} \big[ (\sigma_k Ef)|_\Omega\big]\big(\Phi_k^{-1}(\cdot), c_k t\big) \Big)\Big|_{\Omega_k}  \sep L_p(\Omega_k)} \\
        &\qquad\leq \sum_{\ell=1}^K \norm{ \Big(\chi_{P_\ell}(\cdot)\, \osc_{1,\Omega}^{\widetilde{N}-1} \big[ (\sigma_k Ef)|_\Omega\big]\big(\cdot, c_k t\big) \Big) \circ \Phi_k^{-1}  \sep L_p(\Phi_k(\Omega))} \\
        &\qquad\sim \sum_{\ell=1}^K \norm{ \osc_{1,\Omega}^{\widetilde{N}-1} \big[ (\sigma_k Ef)|_\Omega\big](\cdot, c_k t) \sep L_p(P_\ell\cap\Omega)}, \qquad 0\leq t \leq t_k,
    \end{align*}
    since \cite[Lemma~3]{HoWe23} particularly applies for $L_p=\mathcal{M}^p_p$. Still following the lines of the proof of \autoref{prop:osc} we find
    \begin{align*}
         &\frac{1}{\abs{P}^\tau}\bigg( \int_{0}^{\widetilde{T}} t^{-sq} \norm{\osc_{1,\omega_k}^{\widetilde{N}-1}f_k(\cdot,t) \sep L_p(P\cap \omega_k)}^q \frac{\d t}{t} \bigg)^{\frac{1}{q}} \\
         &\qquad \qquad \lesssim \sum_{\ell=1}^K \frac{1}{\abs{P_\ell}^\tau} \bigg( \int_{0}^{\widetilde{T}} t^{-sq} \norm{ \osc_{1,\Omega}^{N-1} f(\,\cdot\,, c_k t) \sep L_p(P_\ell\cap\Omega)}^q \frac{\d t}{t} \bigg)^{\frac{1}{q}} \\
         &\qquad\qquad \qquad \qquad + \sum_{\ell=1}^K \frac{1}{\abs{P_\ell}^\tau} \norm{ \int_{B(\,\cdot\,,R)\cap \Omega} \abs{f(y)} \d y \sep L_p(P_\ell\cap\Omega)}\\
         &\qquad \qquad \lesssim \abs{f}^{(T,1,N)}_{\osc,\tau,\Omega} + \sup_{P'\in\mathcal{Q}} \frac{1}{\abs{P'}^\tau} \norm{ \int_{B(\,\cdot\,,R)\cap \Omega} \abs{f(y)} \d y \sep L_p(P'\cap\Omega)}
    \end{align*}
    where again $\widetilde{T}$ has to be chosen small enough compared to $T$ and $R$. Taking the sup w.r.t.~$P$ thus shows the needed bounds
    \begin{align*}
         \abs{f_k}^{(\widetilde{T},1,\widetilde{N})}_{\osc,\tau,\omega_k} 
         &\lesssim \abs{f}^{(T,1,N)}_{\osc,\tau,\Omega} + \sup_{P\in\mathcal{Q}} \frac{1}{\abs{P}^\tau} \norm{ \int_{B(\,\cdot\,,R)\cap \Omega} \abs{f(y)} \d y \sep L_p(P\cap\Omega)}, \qquad k=1,\ldots,m,
    \end{align*}
    and likewise for $k=0$ which implies the statement for $v=1$. The general case is obtained from this exactly as before; see the proof of \autoref{prop:osc}.
\end{proof}

\subsection{Differences on Bounded Convex Lipschitz Domains}\label{Subsec_Diff_conv}

Here we extend our results on differences to the case of bounded convex Lipschitz domains by proving counterparts to \cite[Proposition~5]{HoWe23} for the scales of Besov-Morrey and Besov-type spaces, respectively.

\begin{prop}\label{prop:diff_conv}
    For $d \in \N$ let $\Omega\subset\R$ be a bounded convex Lipschitz domain. 
    Further let 
    $1 < p \leq u < \infty$, $0 < q,T \leq \infty$, $0 < R < \infty$, $N \in \N$, and $s > 0$. 
    Then there holds
    \begin{align*}
        \norm{ f \sep \mathcal{N}^s_{u,p,q}(\Omega) } 
        & \lesssim  \norm{ \esssup_{y\in B(\,\cdot\,, R) \cap \Omega} \abs{f(y)} \sep  \mathcal{M}^{u}_{p}( \Omega)} + \abs{f}^{(T,\infty,N)}_{\Delta,\Omega}, \qquad f \in L^\loc_\infty(\Omega),
    \end{align*}
    as well as $\norm{ f \sep \mathcal{N}^s_{u,p,q}(\Omega) } \lesssim  \norm{ f \sep  \mathcal{M}^{u}_{p}( \Omega)} + \abs{f}^{(T,\infty,N)}_{\Delta,\Omega}$, with  constants independent of $f$.
\end{prop}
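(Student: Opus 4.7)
The plan is to adapt the localization strategy used in the proof of \autoref{prop:osc}, combining it with the difference characterization on special Lipschitz domains from \autoref{prop:diff_special_Nsupq}, which is applicable with $v=\infty$. First, I would decompose $f\in L^\loc_\infty(\Omega)$ via balls $B_1,\ldots,B_m$ covering~$\partial\Omega$, a smooth partition of unity $\sigma_0,\ldots,\sigma_m\in\mathcal{D}(\R)$, and affine-linear diffeomorphisms $\Phi_k$ that turn each piece $B_k\cap\Omega$ into a portion of a special Lipschitz domain $\omega_k$. Setting $f_0:=\sigma_0 Ef$ and $f_k:=[(\sigma_k Ef)\circ\Phi_k^{-1}]|_{\omega_k}$, an appeal to \autoref{lem:tools} yields
\[
    \norm{f \sep \mathcal{N}^s_{u,p,q}(\Omega)} \lesssim \norm{f_0 \sep \mathcal{N}^s_{u,p,q}(\R)} + \sum_{k=1}^m \norm{f_k \sep \mathcal{N}^s_{u,p,q}(\omega_k)}.
\]

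Next, I would fix $\widetilde N := N+L-1$ with $L>s+d$ and a sufficiently small $\widetilde T>0$, and apply \autoref{thm_diff_Rda=2} to $f_0$ together with \autoref{prop:diff_special_Nsupq} to each $f_k$, both with $v=\infty$. The main terms on the resulting right hand sides can be bounded by $\norm{\esssup_{B(\cdot,R)\cap\Omega}\abs{f} \sep \mathcal{M}^u_p(\Omega)}$ (and, because $p>1$, also by $\norm{f \sep \mathcal{M}^u_p(\Omega)}$ using \cite[Lemma~3]{HoWe23}) following verbatim Steps~3b and~4 of the proof of \cite[Proposition~4]{HoWe23}.

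The main obstacle is to establish, for each $k=0,1,\ldots,m$, the seminorm bound
\[
    \abs{f_k}^{(\widetilde T,\infty,\widetilde N)}_{\Delta,\omega_k} \lesssim \abs{f}^{(T,\infty,N)}_{\Delta,\Omega} + \norm{\esssup_{y\in B(\,\cdot\,,R)\cap\Omega} \abs{f(y)} \sep \mathcal{M}^u_p(\Omega)}.
\]
To derive this, I would work pointwise from the discrete Leibniz rule
\[
    \Delta^{\widetilde N}_{h'}(\sigma_k\cdot Ef)(y) = \sum_{j=0}^{\widetilde N}\binom{\widetilde N}{j}\, \Delta^{\widetilde N - j}_{h'}\sigma_k(y)\, \Delta^{j}_{h'}(Ef)\bigl(y+(\widetilde N-j)h'\bigr),
\]
applied at $y=\Phi_k^{-1}(x)$ with $h':=\Phi_k^{-1}(x+h)-\Phi_k^{-1}(x)$; affine-linearity of~$\Phi_k$ gives $\Delta^{\widetilde N}_h f_k(x) = \Delta^{\widetilde N}_{h'}(\sigma_k Ef)(\Phi_k^{-1}(x))$ with $\abs{h'}\sim\abs{h}$. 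Terms with $j<N$ involve only at most $N-1$ differences of $f$, but Taylor expansion of the smooth factor~$\sigma_k$ yields a gain of order $|h|^L$; after integration against $t^{-sq}\d t/t$ on $(0,\widetilde T)$, this contribution is absorbed into $\norm{\esssup_{B(\,\cdot\,,R)\cap\Omega}\abs{f}\sep \mathcal{M}^u_p(\Omega)}$ thanks to $L>s+d$. The remaining terms with $j\geq N$ reduce, via binomial identities, to differences of order exactly~$N$ of $f$ at $\Phi_k^{-1}(x)$ along displacement vectors lying in $V^{N}(\Phi_k^{-1}(x), c_k t)$ with respect to~$\Omega$. Here the \emph{convexity} of $\Omega$ is decisive: it guarantees that whenever the chain $x+\ell h \in \omega_k$ is admissible for $0\leq\ell\leq\widetilde N$, the pulled-back chain $\Phi_k^{-1}(x)+\ell h'$ remains inside $\Omega$ for $0\leq\ell\leq N$. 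This pulling-back step would otherwise break down for a general bounded Lipschitz domain.

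Finally, multiplying the pointwise bound by $\chi_{\omega_k\cap \Phi_k(\Omega)}$ in analogy to~\eqref{eq:proof_osc_morrey_bound}, transferring the $\mathcal{M}^u_p(\omega_k)$-norm back to $\mathcal{M}^u_p(\Omega)$ via \cite[Lemma~3(iv),(v)]{HoWe23}, and summing over $k$ yields the claim. The upgrade from the ball-mean version of the main term to the plain $\norm{f \sep \mathcal{M}^u_p(\Omega)}$ version (valid for $p>1$) then follows along the lines of Step~5 in the proof of \autoref{prop:diff_special_Nsupq}.
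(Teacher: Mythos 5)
Your proposal takes a genuinely different route from the paper, and it contains a gap that I do not see how to close.

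The paper's proof of \autoref{prop:diff_conv} does not localize the difference seminorm at all. Instead it starts from the already established oscillation bound of \autoref{prop:osc} (whose localization is unproblematic because oscillations carry no chain-admissibility constraint), and then compares $\osc_{1,\Omega}^{N-1}f(\cdot,t)$ to $\esssup_{h\in V^N(\cdot,2t)}\abs{\Delta_h^Nf(\cdot)}$ via the Whitney-type estimate of Dekel--Leviatan for \emph{convex} domains, which produces the Hardy--Littlewood maximal operator $\textbf{M}$ as intermediary (see \eqref{eq:proof_whitney}). Convexity enters \emph{only} through that Whitney estimate; the Morrey boundedness of $\textbf{M}$ for $p>1$ finishes the job.

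Your route — localizing the difference seminorm $\abs{\cdot}^{(\widetilde T,\infty,\widetilde N)}_{\Delta,\omega_k}$ directly — has two problems. First, the place where you invoke convexity is not where it is actually used. The fact that the pulled-back chain $\Phi_k^{-1}(x)+\ell h'$ stays inside $\Omega$ has nothing to do with convexity: it follows (for $\widetilde T$ small) from $\supp\sigma_k\Subset B_k$ combined with $\omega_k\cap\Phi_k(B_k)=\Phi_k(B_k\cap\Omega)$, exactly as in the oscillation localization. If convexity genuinely did not appear anywhere else, your argument would yield the statement for \emph{arbitrary} bounded Lipschitz domains, which would be a strictly stronger result than what the paper proves — a warning sign. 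Second, and more fundamentally, the discrete Leibniz rule produces, for $j\ge N$, terms $\Delta^{j}_{h'}(Ef)\bigl(y+(\widetilde N-j)h'\bigr)$, and after factoring $\Delta^j_{h'}=\Delta^{j-N}_{h'}\Delta^N_{h'}$ one obtains differences of order $N$ of $f$ evaluated at the \emph{shifted} points $y+\ell h'$, not at $y=\Phi_k^{-1}(x)$ as you claim. Taking $\esssup_{h}$ then forces you to control a quantity of the form $\esssup_{\abs{z}<Ct}\,g_t(y+z)$, where $g_t(y)=\esssup_{h\in V^N(y,ct)}\abs{\Delta^N_hf(y)}$. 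For the $v=\infty$ seminorm there is no device to absorb this local essential supremum into a $\mathcal{M}^u_p$-norm of $g_t$: the sup is not dominated by $\textbf{M}g_t$ (the maximal operator averages, it does not take local sups), and in general $\norm{\esssup_{\abs{z}<Ct}g_t(\cdot+z)\sep\mathcal{M}^u_p}$ is \emph{not} bounded by $\norm{g_t\sep\mathcal{M}^u_p}$ uniformly in $t$ (test with $g_t=\chi_{B(0,\varepsilon)}$, $\varepsilon\downarrow 0$). For the integral seminorms $v\le p$ this shift is harmless via Minkowski, and this is precisely why the paper's Propositions~\ref{prop:diff_special_Nsupq} and \ref{prop:osc} (which work with $v=1$ internally) go through — but for $v=\infty$ the direct localization breaks, which is why the paper detours through oscillations and Dekel--Leviatan.
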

\begin{proof}
    We closely follow the lines of the proof of \cite[Proposition~5]{HoWe23} and w.l.o.g.\ assume that $q<\infty$ as well as $T\geq 2^{-J+1}$ for some $J\in\N$.
    Due to \autoref{prop:osc} there holds
    $$
        \norm{ f \sep \mathcal{N}^{s}_{u,p,q}(\Omega) } \lesssim  \norm{ f \sep  \mathcal{M}^{u}_{p}( \Omega)} + \abs{f}^{(2^{-J},1,N)}_{\osc,\Omega}
        \leq \norm{ \esssup_{y\in B(\,\cdot\,, R) \cap \Omega} \abs{f(y)} \sep  \mathcal{M}^{u}_{p}( \Omega)} + \abs{f}^{(2^{-J},1,N)}_{\osc,\Omega},
    $$
    such that it suffices to bound $\abs{f}^{(2^{-J},1,N)}_{\osc,\Omega}$ in terms of $\abs{f}^{(T,\infty,N)}_{\Delta,\Omega}$.
    For this purpose, we use a specially tailored Whitney-type estimate for convex Lipschitz domains~\cite{DekLev} (see \cite[Lemma~7]{HoWe23} for details)
    which implies that 
    \begin{align}\label{eq:proof_whitney}
        \osc_{1,\Omega}^{N-1} f (x,t) 
        \leq c\, \textit{\textbf{M}} \left( \chi_{\Omega}(\cdot) \esssup_{h\in V^N(\,\cdot\,, 2t)} \abs{\Delta_h^N f(\cdot)} \right)(x), \qquad x\in \Omega,\; 0<t\leq 2^{-J},
    \end{align}
    with $c$ being independent of $x$, $t$, and $f$;
    cf.\ \cite[Proposition~5]{HoWe23}. 
    Therein \textit{\textbf{M}} denotes the Hardy-Littlewood maximal operator which is known to be bounded on $\mathcal{M}^{u}_{p}(\R)$ if $p>1$; see, e.g., \cite[Theorem~6.19]{SawBook}.
    Thus,
    \begin{align*}
        \abs{f}^{(2^{-J},1,N)}_{\osc,\Omega} 
        &\stackrel{\eqref{eq:proof_whitney}}{\lesssim} \bigg( \int_{0}^{2^{-J}} t^{-sq} \norm{ \textit{\textbf{M}} \left( \chi_{\Omega}(\ast) \esssup_{h\in V^N(\ast\,, 2t)} \abs{\Delta_h^N f(\ast)} \right)(\cdot) \sep \mathcal{M}^{u}_{p}(\Omega)}^{q} \frac{\d t}{t} \bigg)^{\frac{1}{q}} \\
        &\lesssim \bigg( \int_{0}^{2^{-J+1}} \tau^{-sq} \norm{ \textit{\textbf{M}} \left( \chi_{\Omega}(\ast) \esssup_{h\in V^N(\ast\,, \tau)} \abs{\Delta_h^N f(\ast)} \right)(\cdot) \sep \mathcal{M}^{u}_{p}(\R)}^{q} \frac{\d \tau}{\tau} \bigg)^{\frac{1}{q}} \\
        &\lesssim \bigg( \int_{0}^{2^{-J+1}} t^{-sq} \norm{ \chi_{\Omega}(\cdot) \esssup_{h\in V^N(\cdot\,, t)} \abs{\Delta_h^N f(\cdot)} \sep \mathcal{M}^{u}_{p}(\R)}^{q} \frac{\d t}{t} \bigg)^{\frac{1}{q}} \\
        &\lesssim \abs{f}^{(T,\infty,N)}_{\Delta,\Omega},
    \end{align*}
    where the last inequality is due to \cite[Lemma~3(i)]{HoWe23} and $2^{-J+1}\leq T$.
\end{proof}

The corresponding assertion for Besov-type spaces reads as follows.
\begin{prop}\label{prop:diff_conv_tau}
    For $d \in \N$ let $\Omega\subset\R$ be a bounded convex Lipschitz domain. 
    Further let $1 < p < \infty$, $0\leq\tau < \frac{1}{p}$, $0 < q,T \leq \infty$, $0 < R < \infty$, $N \in \N$, and $s > 0$.
    Then we have
    \begin{align*}
        \norm{ f \sep B^{s,\tau}_{p,q}(\Omega) } 
        & \lesssim \sup_{P\in\mathcal{Q}} \frac{1}{\abs{P}^\tau} \norm{ \esssup_{y\in B(\,\cdot\,, R) \cap \Omega} \abs{f(y)} \sep  L_{p}(P\cap \Omega)} + \abs{f}^{(T,\infty,N)}_{\Delta,\tau,\Omega}, \qquad  f \in L^\loc_\infty(\Omega),
    \end{align*}
    and $\norm{ f \sep B^{s,\tau}_{p,q}(\Omega) } \lesssim  \sup_{P\in\mathcal{Q}}\limits \frac{1}{\abs{P}^\tau} \norm{ f \sep L_{p}(P\cap \Omega)} + \abs{f}^{(T,\infty,N)}_{\Delta,\tau,\Omega}$, with constants independent of~$f$.
\end{prop}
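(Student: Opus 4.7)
The plan is to mirror the argument of \autoref{prop:diff_conv}, with modifications to handle the $\sup_{P\in\mathcal{Q}} |P|^{-\tau} \|\cdot\|_{L_p(P\cap \Omega)}$ structure of the Besov-type quasi-norm. Without loss of generality we may assume $q<\infty$ and pick $J\in\N$ with $T\geq 2^{-J+1}$. First I would apply \autoref{prop:osc_Bstpq} with $v=1$; the main term can be bounded trivially via the pointwise inequality $|f(x)|\leq\esssup_{y\in B(x,R)\cap\Omega}|f(y)|$, leaving us to prove the single estimate
\[
|f|^{(2^{-J},1,N)}_{\osc,\tau,\Omega}\lesssim|f|^{(T,\infty,N)}_{\Delta,\tau,\Omega}.
\]

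To this end I would invoke the Dekel–Leviatan–type Whitney estimate for bounded convex Lipschitz domains from \cite[Lemma~7]{HoWe23}, already used in the proof of \autoref{prop:diff_conv}: for $x\in\Omega$ and $0<t\leq 2^{-J}$,
\[
\osc_{1,\Omega}^{N-1}f(x,t)\leq c\,\mathbf{M}\Big(\chi_\Omega(\cdot)\,\esssup_{h\in V^N(\cdot,2t)}|\Delta_h^N f(\cdot)|\Big)(x),
\]
where $\mathbf{M}$ is the Hardy–Littlewood maximal operator. Inserting this pointwise bound into the $L_p(P\cap\Omega)$-norm for each dyadic $P$, using $\|\cdot\|_{L_p(P\cap\Omega)}\leq\|\cdot\|_{L_p(P)}$, substituting $r=2t$ and writing $h_r:=\chi_\Omega\,\esssup_{h\in V^N(\cdot,r)}|\Delta_h^N f|$, the task reduces to establishing
\[
\sup_{P\in\mathcal{Q}}\frac{1}{|P|^\tau}\bigg(\int_0^{2^{-J+1}}r^{-sq}\|\mathbf{M}h_r\|_{L_p(P)}^q\,\frac{\d r}{r}\bigg)^{1/q}\lesssim\sup_{P'\in\mathcal{Q}}\frac{1}{|P'|^\tau}\bigg(\int_0^{T}r^{-sq}\|h_r\|_{L_p(P'\cap\Omega)}^q\,\frac{\d r}{r}\bigg)^{1/q},
\]
i.e., a $B^{s,\tau}_{p,q}$-type boundedness of $\mathbf{M}$ acting pointwise in $r$.

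The main obstacle is that this $\mathbf{M}$-boundedness does \emph{not} follow directly from a pointwise-in-$r$ Chiarenza–Frasca bound on $\mathcal{M}^u_p(\R^d)$ with $u:=1/(\tfrac1p-\tau)$, because the $\sup_P$ sits outside the $L^q(r)$-integral and the two operations do not commute. To handle it I would, for each $P$, split $h_r=h_r\chi_{2P}+h_r\chi_{(2P)^c}$. The local part is controlled by the $L_p$-boundedness of $\mathbf{M}$ (using $p>1$) together with covering $2P$ by boundedly many dyadic cubes of size comparable to $|P|$. For the non-local part the standard pointwise tail decay yields, for $x\in P$,
\[
\mathbf{M}(h_r\chi_{(2P)^c})(x)\leq C\sum_{k\geq 1}2^{-kd/p}|P|^{-1/p}\|h_r\|_{L_p(2^{k+3}P)},
\]
so that multiplication by $|P|^{-\tau}$ and the insertion of $|2^{k+3}P|^{\tau-\tau}\sim 2^{kd\tau}|P|^\tau\cdot|2^{k+3}P|^{-\tau}$ produces a geometric series in $2^{-k(d/p-d\tau)}$ whose convergence is guaranteed precisely by $\tau<1/p$. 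Raising to the $q$-th power (via H\"older's inequality when $q>1$ or subadditivity when $q\leq 1$), integrating in $r$ and then taking $\sup_P$ transfers this pointwise-in-$r$ bound to the integrated form above. Combining the local and tail parts and recalling that $\|h_r\|_{L_p(P')}=\|g_r\|_{L_p(P'\cap\Omega)}$ with $g_r(y)=\esssup_{h\in V^N(y,r)}|\Delta_h^N f(y)|$, we obtain $|f|^{(2^{-J},1,N)}_{\osc,\tau,\Omega}\lesssim|f|^{(2^{-J+1},\infty,N)}_{\Delta,\tau,\Omega}\leq|f|^{(T,\infty,N)}_{\Delta,\tau,\Omega}$. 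The quantitative tail estimate exploiting $\tau<1/p$ is the technically delicate step.
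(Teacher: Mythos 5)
Your proof is correct but takes a genuinely different, more self-contained route at the central technical step, so let me compare it with the paper's argument. Both start identically: reduce via \autoref{prop:osc_Bstpq} (with $v=1$, after bounding the main term trivially) to proving $\abs{f}^{(2^{-J},1,N)}_{\osc,\tau,\Omega}\lesssim\abs{f}^{(T,\infty,N)}_{\Delta,\tau,\Omega}$, and then invoke the Whitney-type estimate for bounded convex Lipschitz domains from \cite[Lemma~7]{HoWe23}, which inserts the Hardy--Littlewood maximal operator $\mathbf{M}$ between $\osc_{1,\Omega}^{N-1}f$ and the essential supremum of differences. Where you diverge from the paper is in controlling $\mathbf{M}$ in the Besov-type quasi-norm. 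The paper discretizes the $t$-integral dyadically and then simply cites the vector-valued maximal inequality of \cite[Proposition~2.3(ii)]{ZSYY1}, which is exactly the statement $\sup_{P}\abs{P}^{-\tau}\big(\sum_j\norm{\mathbf{M}g_j\sep L_p(P)}^q\big)^{1/q}\lesssim\sup_{P}\abs{P}^{-\tau}\big(\sum_j\norm{g_j\sep L_p(P)}^q\big)^{1/q}$ for $p>1$ and $0\leq\tau<1/p$. You instead re-derive this boundedness from scratch via a local/tail decomposition of $\mathbf{M}h_r$ relative to each cube $P$. You correctly flag the key subtlety that a naive pointwise-in-$r$ application of Chiarenza--Frasca boundedness of $\mathbf{M}$ on $\mathcal{M}^u_p(\R)$ leaves the $\sup_P$ inside the $r$-integral, where it cannot be moved out; your workaround (H\"older on the annular averages, covering $2^{k+3}P$ by finitely many comparable dyadic cubes, the geometric factor $2^{-kd(1/p-\tau)}$ furnished by $\tau<1/p$, and a H\"older-in-$k$ step to push the $q$-th power through the $k$-sum before integrating in $r$ and taking $\sup_P$) is sound. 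In effect you re-prove \cite[Proposition~2.3(ii)]{ZSYY1} inline. Both arguments rely on $p>1$ (for $L_p$-boundedness of $\mathbf{M}$, respectively for the cited lemma) and $\tau<1/p$; the paper's version is substantially shorter because it outsources the technical core to the reference, whereas yours buys self-containment at the cost of length.
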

\begin{proof}
    We argue as in the proof of  \autoref{prop:diff_conv}.
    W.l.o.g.\ we may assume $q<\infty$ and $T\geq 2^{-J+2}$ with $J\in\N$ such that, in view of \autoref{prop:osc_Bstpq}, it suffices to 
    prove $\abs{f}^{(2^{-J},1,N)}_{\osc,\tau,\Omega}\lesssim  \abs{f}^{(T,\infty,N)}_{\Delta,\tau,\Omega}$.
    To this end, note that for all fixed $P\in\mathcal{Q}$ Formula~\eqref{eq:proof_whitney} implies
    \begin{align*}
        &\int_0^{2^{-J}} t^{-sq} \norm{\osc_{1,\Omega}^{N-1} f (\cdot,t) \sep L_p(P\cap\Omega)}^q \frac{\d t}{t} \\
        &\qquad \lesssim\sum_{j=J-1}^\infty \int_{2^{-(j+2)}}^{2^{-(j+1)}} t^{-sq} \norm{\textit{\textbf{M}} \left( \chi_{\Omega}(\ast) \esssup_{h\in V^N(\,\ast\,, 2t)} \abs{\Delta_h^N f(\ast)} \right)(\cdot) \sep L_p(P)}^q \frac{\d t}{t} \\
        &\qquad \lesssim \sum_{j=J-1}^\infty \norm{\textit{\textbf{M}} \left( 2^{js}\, \chi_{\Omega}(\ast) \esssup_{h\in V^N(\,\ast\,, 2^{-j})} \abs{\Delta_h^N f(\ast)} \right)(\cdot) \sep L_p(P)}^q.
    \end{align*}
    As $p>1$, the vector-valued maximal inequality \cite[Proposition~2.3(ii)]{ZSYY1} thus yields that
    \begin{align*}
        \abs{f}^{(2^{-J},1,N)}_{\osc,\tau,\Omega}
        &\lesssim \sup_{P\in\mathcal{Q}} \frac{1}{\abs{P}^\tau} \left( \sum_{j=J-1}^\infty \norm{\textit{\textbf{M}} \left( 2^{js}\, \chi_{\Omega}(\ast) \esssup_{h\in V^N(\,\ast\,, 2^{-j})} \abs{\Delta_h^N f(\ast)} \right)(\cdot) \sep L_p(P)}^q \right)^{\frac{1}{q}} \\
        &\lesssim \sup_{P\in\mathcal{Q}} \frac{1}{\abs{P}^\tau} \left( \sum_{j=J-1}^\infty 2^{jsq} \norm{ \chi_{\Omega}(\cdot) \esssup_{h\in V^N(\,\cdot\,, 2^{-j})} \abs{\Delta_h^N f(\cdot)} \sep L_p(P)}^q \right)^{\frac{1}{q}} \\
        &\lesssim \sup_{P\in\mathcal{Q}} \frac{1}{\abs{P}^\tau} \left( \sum_{j=J-1}^\infty \int_{2^{-j}}^{2^{-(j-1)}} t^{-sq} \norm{ \esssup_{h\in V^N(\,\cdot\,, t)} \abs{\Delta_h^N f(\cdot)} \sep L_p(P\cap \Omega)}^q \frac{\d t}{t} \right)^{\frac{1}{q}} \\
        &\leq \abs{f}^{(T,\infty,N)}_{\Delta,\tau,\Omega}
    \end{align*}
    since we assumed $T\geq 2^{-J+2}$.
\end{proof}

\section{Proofs of the Main Theorems}\label{sect:proofs}
Now we are well-prepared to prove our main results stated in Theorems~\ref{mainresult1}--\ref{thm_main_diff_2}.

\vspace{0,2 cm}

\textbf{Proof of \autoref{mainresult1}. } Part~(i) is proven in \autoref{thm_osc_Rda=2} while part~(ii) follows from \autoref{prop:N_Omega_lower} combined with Propositions~\ref{prop:osc_special} and \ref{prop:osc}.\qed

\vspace{0,2 cm}

\textbf{Proof of \autoref{mainresult2}. }  \autoref{mainresult2}(i) corresponds to \autoref{thm_osc_B_Rda=2}. For (ii) we combine
\autoref{prop:B_Omega_lower} with Propositions~\ref{prop:osc_special_Bstpq} and \ref{prop:osc_Bstpq}.\qed

\vspace{0,2 cm}

\textbf{Proof of \autoref{thm_main_diff_1}. } The first assertion is shown in \autoref{thm_diff_Rda=2}. The lower bounds for (ii) and (iii) are given in \autoref{prop:N_Omega_lower} and for the corresponding upper bounds we refer to Propositions~\ref{prop:diff_special_Nsupq} and \ref{prop:diff_conv}, respectively.\qed

\vspace{0,2 cm}

\textbf{Proof of \autoref{thm_main_diff_2}. } 
Here part~(i) corresponds to \autoref{thm_diff_B_Rda=2}. For the remaining assertions~(ii) and (iii) we combine \autoref{prop:B_Omega_lower} with Propositions~\ref{prop:diff_special_Bstpq} and~\ref{prop:diff_conv_tau}.\qed
    
\vspace{0,3 cm}
\noindent
\textbf{Acknowledgments.} 
Marc Hovemann has been supported by Deutsche Forschungsgemeinschaft (DFG), grant HO 7444/1-1 with project number 528343051.

\addcontentsline{toc}{section}{References}
\small

\end{document}